\documentclass[a4paper, 11pt]{amsart}   	
\usepackage{geometry}                		
\geometry{letterpaper}                   		
\usepackage{graphicx}				
\usepackage{amssymb}

\usepackage{pst-node}
\usepackage{tikz-cd} 
\usepackage{mathtools}

\usepackage{mathrsfs}

\usepackage{url}
\usepackage{hyperref}
\usepackage{amsthm}
\usepackage{amsfonts}
\usepackage{yhmath}
\usepackage{cleveref}
\usepackage{caption}

\theoremstyle{definition}
\newtheorem{definition}{Definition}[section]

\theoremstyle{remark}
\newtheorem{remark}[definition]{Remark}

\theoremstyle{theorem}
\newtheorem{theorem}[definition]{Theorem}

\theoremstyle{corollary}
\newtheorem{corollary}[definition]{Corollary}

\theoremstyle{lemma}
\newtheorem{lemma}[definition]{Lemma}

\theoremstyle{example}
\newtheorem{example}[definition]{Example}

\theoremstyle{prop}
\newtheorem{prop}[definition]{Proposition}

\newcommand{\nospaceperiod}{\makebox[0pt][l]{\,.}}
\begin{document}

\title{Stability conditions on product varieties}

\author{Yucheng Liu }

\address{Department of Mathematics, Northeastern University, 360 Huntington Avenue, Boston, MA 02115, USA}
\email{liu.yuche@husky.neu.edu}

\keywords{Bridgeland stability conditions, product of curves, abelian varieties}
\subjclass[2010]{14F05,14J32,18E30}

\maketitle

\begin{abstract}
Given a stability condition on a smooth projective variety $X$, we construct  a family of stability conditions on $X\times C$, where $C$ is a smooth projective curve. In particular, this gives the existence of stability conditions on arbitrary products of curves. The proof uses, by following an idea of Toda, the positivity lemma established by Bayer and Macr\`i and weak stability conditions on the Abramovich-Polishchuk heart of a bounded t-structure in $D(X\times C)$.
\end{abstract}
\section{introduction}
 Motivated by Douglas's work on D-branes and $\Pi$ stability in \cite{douglas2002dirichlet}, Bridgeland introduced a general theory of stability conditions on triangulated categories in \cite{bridgeland2007stability}; the theory was further studied by Kontsevich and Soibelman in \cite{kontsevich2008stability}. In general, stability conditions are very difficult to construct: while we have a very good knowledge in the case of curves and surfaces (see \cite{bridgeland2008stability} and \cite{ABsurfaces}), starting from 3-folds no example was known on varieties of general type or Calabi-Yau varieties in dimension 4 or higher (for Calabi-Yau threefolds, see \cite{MPabelian}, \cite{bayer2016space} and \cite{li2018stability}). In this paper, we solve this problem for product varieties over any algebraically closed field, when one of the two factors is a curve. 
 
 Let $X$ be a smooth projective variety, $C$ be a smooth projective curve, and let $\sigma=(\mathcal{A},Z)$ be a stability condition on the bounded derived category of coherent sheaves $D(X)$.
 \begin{theorem}\label{main theorem}
 	Assume that the image of the central charge $Z$ is discrete. Then there exists a continuous family of stability conditions on $D(X\times C)$, parametrized by $\mathbb{R}_{>0}\times \mathbb{R}_{>0}$, associated with $\sigma$.

\end{theorem}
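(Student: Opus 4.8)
The plan is to put the desired stability conditions on a tilt of the \emph{Abramovich--Polishchuk heart} on $D(X\times C)$ induced from $\mathcal{A}$, with a central charge built from $Z$ together with the tautological stability data of the curve $C$; this follows the strategy (due to Toda, and used by Bayer--Macr\`i--Stellari for abelian threefolds) of first producing a \emph{weak} stability condition and then tilting it, with the Bayer--Macr\`i positivity lemma supplying the support property at the end.

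\emph{Step 1: the induced heart.} I would first observe that discreteness of $\operatorname{Im}(Z)$ forces $\mathcal{A}$ to be a Noetherian abelian category: for an ascending chain of subobjects of a fixed $E\in\mathcal{A}$ the central charges of the successive quotients lie in $\overline{\mathbb{H}}\cap\operatorname{Im}(Z)$, so their imaginary parts stabilize, and (using that $\mathcal{A}$ is a bounded tilt of $\mathrm{Coh}(X)$) the real parts cannot run off to $-\infty$, so the chain stabilizes. Noetherianity is exactly the input needed to run the Abramovich--Polishchuk construction for the smooth projective morphism $p\colon X\times C\to X$: it yields a bounded $t$-structure on $D(X\times C)$ whose heart $\mathcal{A}_C$ is again Noetherian and enjoys the compatibilities I will use repeatedly: (i) $i_{c*}$ maps $\mathcal{A}$ into $\mathcal{A}_C$ for each closed point $c\in C$; (ii) the restriction of $E\in\mathcal{A}_C$ to the generic fibre $X_\eta=X\times_C\operatorname{Spec}K(C)$ lies in the base-changed heart; (iii) $\mathbf{R}p_*(E\otimes q^*\mathcal{O}_C(n))\in\mathcal{A}$ for $n\gg 0$, where $q$ denotes the second projection.

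\emph{Step 2: weak stability condition, then tilt.} Using (iii), for $E\in\mathcal{A}_C$ the function $n\mapsto Z\bigl([\mathbf{R}p_*(E\otimes q^*\mathcal{O}_C(n))]\bigr)$ is eventually a linear polynomial $n\,Z_1(E)+Z_0(E)$ in $n$ (twisting on a curve changes the class only linearly), and $Z_1,Z_0\colon K(X\times C)\to\mathbb{C}$ are group homomorphisms factoring through the numerical $K$-group. By (ii), $Z_1(E)=\deg(\mathcal{O}_C(1))\cdot Z_\eta(E|_{X_\eta})$, whence $\operatorname{Im}Z_1\ge 0$ on $\mathcal{A}_C$, so $(\mathcal{A}_C,Z_1)$ is a weak stability function; the Noetherian property of $\mathcal{A}_C$ together with discreteness of $\operatorname{Im}(Z)$ then gives Harder--Narasimhan filtrations, so $(\mathcal{A}_C,Z_1)$ is a weak stability condition whose $\{v_E=0\}$ part is, by (i), the extension closure of the $i_{c*}\mathcal{A}$, on which $Z_0$ restricts to an honest stability function. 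I would then tilt $\mathcal{A}_C$ at this weak stability condition — allowing a real twist $\beta$ along $C$ and a rescaling $t>0$ of the curve polarization, a two-parameter deformation modeled on the surface formula $-\int e^{-\beta-it\omega}\operatorname{ch}$ — to obtain a heart $\mathcal{A}_C^{\sharp}$ carrying a two-parameter family of central charges $Z_{\beta,t}$. The positivity statements of Step 2, combined with relative ampleness of $\mathcal{O}_C(1)$ (which guarantees $\mathbf{R}p_*(E\otimes q^*\mathcal{O}_C(n))\ne 0$ for $E\ne 0$, so that $Z_0$ separates nonzero objects of the $\{v_E=0\}$ locus), show each $Z_{\beta,t}$ is a stability function on $\mathcal{A}_C^{\sharp}$, and the Noetherian/discreteness input again supplies Harder--Narasimhan filtrations.

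\emph{Step 3, the crux: the support property.} The main obstacle is to establish the support property uniformly in $(\beta,t)$: a quadratic form $Q$ on the numerical $K$-group, negative definite on $\ker Z_{\beta,t}$ and nonnegative on every $Z_{\beta,t}$-semistable object. This is exactly where the Bayer--Macr\`i positivity lemma enters: applied to families of $Z_{\beta,t}$-semistable objects it produces the required inequality, with $Q$ assembled from the support-property form of the given $\sigma$ on $D(X)$ — which controls the fibre class $v_E$, since the leading-order behavior of $Z_{\beta,t}$-stability is governed by $\sigma$-stability on $X$ through the weak stability condition of Step 2 — together with an elementary curve contribution in the $C$-direction. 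Granting this, every $(\mathcal{A}_C^{\sharp},Z_{\beta,t})$ is a Bridgeland stability condition; since the heart is fixed and $Z_{\beta,t}$ varies real-analytically with $(\beta,t)\in\mathbb{R}\times\mathbb{R}_{>0}$ (re-coordinatized as $\mathbb{R}_{>0}\times\mathbb{R}_{>0}$), the family is continuous, which is the assertion of the theorem.
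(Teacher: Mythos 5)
Your overall architecture matches the paper's: the Abramovich--Polishchuk heart, the polynomial $n\mapsto Z(p_*(E\otimes q^*\mathcal{O}(n)))=a(E)n+b(E)+i(c(E)n+d(E))$, a weak stability condition built from its coefficients, a tilt, and the Bayer--Macr\`i positivity lemma. But there is a genuine gap at the crux of your Step 2. After tilting, the problematic objects are the shifts $F[1]$ of torsion-free objects $F$ in the free part of the torsion pair whose imaginary part under the new central charge vanishes (in the paper's coordinates, $-a(F)t+d(F)=0$ with $c(F)>0$); for these one must prove that the real part $c(F)s+b(F)$ is strictly positive, and this does \emph{not} follow from the ``positivity statements of Step 2'' (which only give $\operatorname{Im}Z_1\ge 0$ and control of the $Z_1=0$ locus). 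It requires the Bogomolov--Gieseker-type inequality $b(Q)c(Q)-a(Q)d(Q)\ge 0$ for the $\mu_1$-HN factors $Q$ of $F$, which is exactly where the Bayer--Macr\`i positivity lemma enters the paper (Lemmas \ref{restatement} and \ref{refinement}, via the semistable-reduction sequence of Proposition \ref{semistable reduction} and fiberwise semistability), followed by a nontrivial Abel-summation estimate over the HN factors (Proposition \ref{the construction}). By deferring the positivity lemma entirely to the support property in your Step 3, you have no way to exclude $\operatorname{Re}Z_{\beta,t}(F[1])\ge 0$, so the claim that $Z_{\beta,t}$ is a stability function on $\mathcal{A}_C^{\sharp}$ is unsupported.

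Two further points. First, Noetherianity of the tilted heart, which you need for Harder--Narasimhan filtrations, is not a formal consequence of Noetherianity of $\mathcal{A}_C$; the paper proves it by a Piyaratne--Toda-type argument that again invokes the quadratic inequality $bc-ad\ge 0$. Second, the tilted heart genuinely depends on the parameter $t$ (the torsion pair is cut out by the slope $\nu_t=(-at+d)/(ct)$), so your closing argument ``the heart is fixed and $Z_{\beta,t}$ varies real-analytically'' does not apply; the paper instead constructs the stability conditions at rational parameters, where discreteness of the central charge supplies HN filtrations, and extends to all of $\mathbb{R}_{>0}\times\mathbb{R}_{>0}$ by Bridgeland's deformation theorem, using the quantitative support-property estimates of Section \ref{Support}.
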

 Theorem \ref{main theorem} holds more generally when $D(X)$ is replaced by an admissible subcategory $\mathcal{D}\subset D(X)$ and $D(X\times C)$ is replaced by the base change category $\mathcal{D}_C$. Special cases in dimension three were studied in \cite{Koseki}. 
 
 As a consequence of Theorem \ref{main theorem}, we provide the construction of stability conditions on arbitrary products of curves.
 
 \begin{corollary}\label{main corollary}
 	Let $C_1,\cdots ,C_n$ be smooth projective curves. Then stability conditions exist on $D(C_1\times \cdots\times C_n)$. 
 \end{corollary}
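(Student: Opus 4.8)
The plan is to induct on $n$, applying Theorem~\ref{main theorem} at the $n$-th step with $X=C_1\times\cdots\times C_{n-1}$ and $C=C_n$. The only delicate point is that Theorem~\ref{main theorem} requires the input central charge to have discrete image, so I would prove the stronger statement that for every $n$ the smooth projective variety $C_1\times\cdots\times C_n$ carries a stability condition $(\mathcal{A}_n,Z_n)$ on $D(C_1\times\cdots\times C_n)$ whose central charge $Z_n$ takes values in $\mathbb{Q}+i\mathbb{Q}\subset\mathbb{C}$. This suffices because the central charge of a (numerical) stability condition factors through a finitely generated abelian group, and every finitely generated subgroup of $\mathbb{Q}+i\mathbb{Q}\cong\mathbb{Q}^2$ is discrete — its generators have a common denominator $N$, so it sits inside $\tfrac1N\mathbb{Z}^2$ — so such a $Z_n$ automatically has discrete image and the hypothesis of Theorem~\ref{main theorem} is available at each stage; the real content is that landing in $\mathbb{Q}+i\mathbb{Q}$ is preserved by the construction, which is the substance of the inductive step.

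For the base case $n=1$ I would take Bridgeland's stability condition on the curve $C_1$: the heart $\mathcal{A}_1=\mathrm{Coh}(C_1)$ with $Z_1(E)=-\deg E+i\,\mathrm{rk}\,E$, whose image is the lattice $\mathbb{Z}+i\mathbb{Z}\subseteq\mathbb{Q}+i\mathbb{Q}$. For the inductive step, suppose $(\mathcal{A}_{n-1},Z_{n-1})$ is given on $D(C_1\times\cdots\times C_{n-1})$ with image in $\mathbb{Q}+i\mathbb{Q}$, hence discrete; then Theorem~\ref{main theorem} applies and yields a family of stability conditions $(\mathcal{A}_{s,t},Z_{s,t})$ on $D(C_1\times\cdots\times C_n)$ indexed by $(s,t)\in\mathbb{R}_{>0}\times\mathbb{R}_{>0}$. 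Any single member of this family already proves existence of stability conditions; what is left is to single out a member whose central charge again lies in $\mathbb{Q}+i\mathbb{Q}$, so that the strengthened inductive statement is restored and the induction continues.

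The step I expect to be the real obstacle is exactly this last verification: one must read off from the proof of Theorem~\ref{main theorem} the explicit shape of $Z_{s,t}$ and check that it depends \emph{rationally} on the two parameters. By construction — through the Abramovich--Polishchuk heart in $D(C_1\times\cdots\times C_n)$ and the Bayer--Macr\`i positivity lemma — $Z_{s,t}(E)$ is assembled from $Z_{n-1}$ evaluated on the ``slices over $C_n$'' of $E$ together with the integral rank and degree of $E$ along $C_n$, the parameters $s,t$ entering only through the coefficients of this combination; consequently, choosing $(s,t)\in\mathbb{Q}_{>0}\times\mathbb{Q}_{>0}$ makes those coefficients rational, and then $Z_{s,t}(E)$ is a $\mathbb{Q}$-linear combination of elements of $\mathrm{im}(Z_{n-1})\subseteq\mathbb{Q}+i\mathbb{Q}$ and of $\mathbb{Z}$ and $i\mathbb{Z}$, hence again lies in $\mathbb{Q}+i\mathbb{Q}$. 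If the construction only pins down $Z_{s,t}$ up to a $\mathbb{C}^\times$-normalisation, one uses the $\mathbb{C}^\times$-action — which leaves the semistable objects unchanged — to make the normalisation lie in $\mathbb{Q}[i]^\times$, and since $\mathbb{Q}[i]$ is a field this preserves the property. Granting this, setting $(\mathcal{A}_n,Z_n):=(\mathcal{A}_{s,t},Z_{s,t})$ for such rational $(s,t)$ closes the induction, and Corollary~\ref{main corollary} follows.
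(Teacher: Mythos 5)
Your proof is correct and is essentially the paper's own (implicit) argument: the induction on $n$ starting from $(\mathrm{Coh}(C_1), -\deg + i\,\mathrm{rk})$, with rationality of the central charge as the strengthened inductive hypothesis guaranteeing discreteness, is exactly what the paper sets up via Definition~\ref{rational stability condition} and Theorem~\ref{map of rational stability conditions}. The ``real obstacle'' you flag is resolved there explicitly: $Z_S^{s,t}(E)=c(E)s+b(E)+i(-a(E)t+d(E))$ with $a,b,c,d$ rational-valued when the input is rational, so choosing $s,t\in\mathbb{Q}_{>0}$ keeps the image in $\mathbb{Q}\oplus i\mathbb{Q}$ and the induction closes.
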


In the case when $n=3$, some related results appeared in \cite{Sun19inequality} and \cite{Sun19stability} when this paper was posted. The techniques are completely different.
 
 An important special case of Corollary \ref{main corollary} is when all curves are elliptic curves. This gives examples of stability conditions on Calabi-Yau varieties of any dimension. In this case, the mirror symmetry version of this statement, for Fukaya categories of products of elliptic curves has been announced by Kontsevich in \cite{Kontsevichstability}.

 There are three main ingredients in the proof. The first one is a construction by Abramovich and Polishchuk in \cite{APsheaves} and \cite{polishchuk2007constant} of a heart of bounded t-structure on $D(X\times S)$, where $S$ is any quasi-projective variety of finite type. We then define a weak stability condition on this category by using a polynomial function naturally associated to $Z$ and Abramovich-Polishchuk's heart. Finally, we use the idea of Toda, studied further by Bayer, Macr\`i and Nuer, and the Positivity Lemma from \cite{bayer2014projectivity} to show a quadratic inequality for stable objects with respect to this weak stability condition.

In Section \ref{Support}, we will establish quadratic inequalities inductively to prove that the stability conditions we constructed satisfy the support property. These quadratic inequalities are stronger than the quadratic inequalities we used in the construction,  and can be viewed as a generalization of Bogomolov-Gieseker inequality for product varieties in any dimension.

\subsection*{Outline of this paper} In Section \ref{Section 2}, we review the definition of weak stability conditions.
In Section \ref{Section 3}, we introduce Abramovich and Polishchuk's construction of global heart and construct global weak stability conditions and polynomial functions associated with it.
In Section \ref{Section 4}, we present the proof of our main theorem, without showing the support property, which will be treated in Section \ref{Support}.

\subsection*{Acknowledgements} I would like to thank my supervisor Emanuele Macr\`i for his patient guidance and advice throughout the process of writing this paper. I am also very grateful to Arend Bayer,  Aaron Bertram, Chunyi Li, Paolo Stellari, Yukinobu Toda and Xiaolei Zhao for helpful discussions and suggestions. I am very grateful to Alex Perry for his careful reading of a preliminary version of this paper and many useful suggestions. I am also very grateful to the referee for a very careful reading of the manuscript and many useful comments. The final write-up of this paper was done while the author was visiting University of Paris-Sud, whose hospitality is gratefully acknowledged. This work was partially supported by the NSF grant DMS-1700751 (PI: Macr\`i).

\subsection*{Notation and Conventions} In this paper, all varieties are integral algebraic varieties over an algebraically closed field $k$, a curve is such a variety of dimension 1. We will use $D(X)$ rather than the usual notation $D^b(CohX)$ to denote the bounded derived categories of coherent sheaves on $X$. We set $\mathbb{H}=\{z\in\mathbb{C}\mid Im(z)>0\}$. All functors are derived unless otherwise specified. We set $Im(z)$, $Re(z)$, and $Arg(z)$ to be the imaginary part, the real part, and the argument of a complex number $z$ respectively.

\section{Stability conditions}\label{Section 2}

In this section, we review the definition and some basic results on weak stability conditions (See \cite{bridgeland2008stability}, \cite{kontsevich2008stability} and \cite{baye2011bridgeland}).
\begin{definition}
	A slicing on a triangulated category $\mathcal{D}$ consists of full subcategories $\mathcal{P}(\phi)\subset\mathcal{D}$ for each $\phi\in\mathbb{R}$, satisfying the following axioms:

	\par
	(a) for all $\phi \in \mathbb{R}$, $\mathcal{P}(\phi+1)=\mathcal{P}(\phi)[1]$,
	
	\par
	(b) if $\phi_1>\phi_2$ and $A_j\in\mathcal{P}(\phi_j)$ then $Hom_{\mathcal{D}}(A_1,A_2)=0$,
	
	\par

	(c) for every $0\neq E\in\mathcal{D}$ there is a sequence of real numbers
	
	$$\phi_1>\phi_2>\cdots>\phi_m$$and a sequence of morphisms 
	
	$$0=E_0\xrightarrow{f_1}E_1\xrightarrow{f_2} \cdots \xrightarrow{f_m}E_m=E $$such that the cone of $f_j$ is in $\mathcal{P}(\phi_j)$ for all $j$.
	
\end{definition}

\begin{definition}\label{first WSC}
	Let $\mathcal{D}$ be a triangulated category and $K(\mathcal{D})$ be its Grothendieck group. A weak stability condition on $\mathcal{D}$ consists of a pair $(\mathcal{P},Z)$, where $\mathcal{P}$ is a slicing and $Z:K(\mathcal{D})\xrightarrow{v}\Lambda\xrightarrow{g}\mathbb{C}$ is a group homomorphism factoring through a lattice $\Lambda$ of finite rank. The pair should satisfy the following conditions:
	\par

	(a) If $0\neq E\in\mathcal{P}(\phi)$ then $Z(E)=m(E)exp(i\pi\phi)$ for some $m(E)\in \mathbb{R}_{\geq 0}$.
	
	\par 
	
		(b) (Support property) There exists a quadratic form $Q$ on $\Lambda\otimes\mathbb{R}$ such that $Q|_{ker(g)}$ is negative definite, and $Q(v(E))\geq 0$, for any object $E\in\mathcal{P}(\phi)$. 
	
\end{definition}

\begin{remark}

If we require $m(E)$ to be strictly positive in (a), then the pair $(\mathcal{P},Z)$ is called a stability condition. By \cite[Lemma 2.2]{bridgeland2008stability}, there is a $S^1$ action on the space of stability conditions. Specifically, for any element $e^{i\theta}\in S^1$, $e^{i\theta}\cdot (Z,\mathcal{P})=(Z',\mathcal{P}')$ by setting $Z'=e^{i\theta} Z$ and $\mathcal{P}'(\phi)=\mathcal{P}(\phi-\theta)$.
\end{remark}

There is an equivalent way of defining a stability condition, which will be more frequently used in this paper. Firstly, we need to define what is a (weak) stability function $Z$ on an abelian category $\mathcal{A}$.
\begin{definition} Let $\mathcal{A}$ be an abelian category. We call a group homomorphism $Z:K(\mathcal{A})\rightarrow \mathbb{C}$ a weak stability function on $\mathcal{A}$ if, for $E\in \mathcal{A}$, we have $Im(Z(E))\geq0$, with $Im(Z(E))=0 \implies  Re(Z(E))\leq0$. 
	If moreover, for $E\neq 0,$ $Im(Z(E))=0\implies Re(Z(E))< 0$, we say that $Z$ is a stability function.
\end{definition}

\begin{definition}\label{slicing }
	A weak stability condition on $\mathcal{D}$ is a pair $\sigma=(\mathcal{A},Z)$ consisting of the heart of a bounded t-structure $\mathcal{A}\subset\mathcal{D}$ and a weak stability function $Z:K(A)\rightarrow \mathbb{C}$ such that (a) and (b) below are satisfied:
	
	\par
	(a) (HN-filtration) The function $Z$ allows us to define a slope for any object $E$ in the heart $\mathcal{A}$ by
	
	$$\mu_{\sigma}(E):=\begin{cases} -\frac{Re(Z(E))}{Im(Z(E))}\ &\text{if} \  Im(Z(E))> 0,\\ +\infty &\text{otherwise.} \end{cases}$$

	The slope function gives a notion of stability: A nonzero object $E\in \mathcal{A}$ is $\sigma$ semi-stable if for every proper subobject $F$, we have $\mu_{\sigma}(F)\leq\mu_{\sigma}(E)$.
	
We require any object $E$ of $\mathcal{A}$ to have a Harder-Narasimhan filtration in $\sigma$ semi-stable ones, i.e., there exists a unique filtration $$0=E_0\subset E_1 \subset E_2\subset \cdots \subset E_{m-1} \subset E_m=E$$ such that $E_i/E_{i-1}$ is  $\sigma$ semi-stable and $\mu_{\sigma}(E_i/E_{i-1})>\mu_{\sigma}(E_{i+1}/E_i)$ for any $1\leq i\leq m$.
	
	(b) (Support property) Equivalently as in Definition \ref{first WSC}, the central charge $Z$ factors as $K(\mathcal{D})\xrightarrow{v} \Lambda\xrightarrow{g} \mathbb{C}$, and there exists a quadratic form $Q$ on $\Lambda_{\mathbb{R}}$ such that $Q|_{ker(g)}$ is negative definite and $Q(v(E))\geq 0$ for any $\sigma$ semi-stable object $E\in\mathcal{A}$.
\end{definition}

\begin{remark}
Similarly, we call $(\mathcal{A},Z)$ a stability condition if $Z$ is a stability function on $\mathcal{A}$. Sometimes, we call the pair $(\mathcal{A},Z)$ a weak pre-stability condition when they just satisfy condition (a).

 If $Z$ has discrete image in $\mathbb{C}$, and $\mathcal{A}$ is Noetherian, then condition (a) is satisfied automatically. See  \cite[Proposition 2.4]{bridgeland2007stability}.
 
\end{remark}
There is an important operation called tilting with respect to a torsion pair, which is very useful for constructing stability conditions.
\begin{definition}
	A torsion pair in an abelian category $\mathcal{A}$ is a pair of full subcategories $(\mathcal{T},\mathcal{F})$ of $\mathcal{A}$ which satisfy $Hom_{\mathcal{A}}(T,F)=0$ for $T\in\mathcal{T}$ and $F\in\mathcal{F}$, and such that every object $E\in\mathcal{A}$ fits into a short exact sequence $$0\rightarrow T\rightarrow E\rightarrow F\rightarrow 0$$
	for some pair of objects $T\in\mathcal{T}$ and $F\in\mathcal{F}$.
\end{definition}
\begin{remark}
	In this paper, most torsion pairs are coming from weak stability conditions $\sigma=(\mathcal{A},Z)$. In fact, let $$\mathcal{T}=\{E\in\mathcal{A}\mid \mu_{\sigma,min}(E)>0\}\ and \ \mathcal{F}=\{E\in\mathcal{A}\mid \mu_{\sigma,max}(E)\leq 0\}$$ be a pair of full subcategories, where $\mu_{\sigma,min}(E)$ is the slope of the last HN-factor of $E$ and $\mu_{\sigma,max}(E)$ is the slope of the first HN-factor of $E$. It is easy to see this is a torsion pair.
\end{remark}
\begin{lemma}[{\cite[Proposition 2.1]{happel1996tilting}}]
	Suppose $\mathcal{A}$ is the heart of a bounded t-structure on a triangulated category $\mathcal{D}$, $(\mathcal{T},\mathcal{F})$ is a torsion pair in $\mathcal{A}$. Then $\mathcal{A}^{\#}=\langle\mathcal{T},\mathcal{F}[1]\rangle$ is a heart of a bounded t-structure on $\mathcal{D}$.
\end{lemma}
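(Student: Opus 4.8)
The plan is to exhibit the bounded t-structure whose heart is $\mathcal{A}^{\#}$ explicitly in terms of the cohomology functors $H^{\bullet}$ of the given t-structure on $\mathcal{D}$ (so that $\mathcal{A}=\mathcal{D}^{\leq 0}\cap\mathcal{D}^{\geq 0}$ and every object has $H^{i}=0$ for all but finitely many $i$), and to verify the axioms by reducing everything to the given t-structure together with the single defining identity $Hom_{\mathcal{A}}(\mathcal{T},\mathcal{F})=0$.

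First I would set
\[
\mathcal{D}^{\#\leq 0}=\{E:\ H^{i}(E)=0\text{ for }i\geq 1,\ H^{0}(E)\in\mathcal{T}\},\qquad
\mathcal{D}^{\#\geq 0}=\{E:\ H^{i}(E)=0\text{ for }i\leq -2,\ H^{-1}(E)\in\mathcal{F}\},
\]
with $\mathcal{D}^{\#\leq n}:=\mathcal{D}^{\#\leq 0}[-n]$ and $\mathcal{D}^{\#\geq n}:=\mathcal{D}^{\#\geq 0}[-n]$. From $H^{i}(E[1])=H^{i+1}(E)$ and $\mathcal{T},\mathcal{F}\subset\mathcal{A}$ one checks immediately that $\mathcal{D}^{\#\leq 0}[1]\subset\mathcal{D}^{\#\leq 0}$ and $\mathcal{D}^{\#\geq 0}[-1]\subset\mathcal{D}^{\#\geq 0}$, and that $\mathcal{D}^{\#\geq 1}=\{E:\ H^{i}(E)=0\text{ for }i\leq -1,\ H^{0}(E)\in\mathcal{F}\}$. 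The intersection $\mathcal{D}^{\#\leq 0}\cap\mathcal{D}^{\#\geq 0}$ consists of the $E$ with $H^{i}(E)=0$ for $i\notin\{-1,0\}$, $H^{0}(E)\in\mathcal{T}$ and $H^{-1}(E)\in\mathcal{F}$; using the truncation triangle $H^{-1}(E)[1]\to E\to H^{0}(E)\to H^{-1}(E)[2]$ this is exactly $\langle\mathcal{T},\mathcal{F}[1]\rangle$ (for the reverse inclusion one uses that $\mathcal{D}^{\#\leq 0}$ and $\mathcal{D}^{\#\geq 0}$ are each closed under extensions — by the long exact cohomology sequence, since $\mathcal{T}$ is closed under quotients and $\mathcal{F}$ under subobjects — and both contain $\mathcal{T}$ and $\mathcal{F}[1]$). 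So it remains to prove that $(\mathcal{D}^{\#\leq 0},\mathcal{D}^{\#\geq 0})$ is a bounded t-structure.

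For the orthogonality $Hom(X,Y)=0$ with $X\in\mathcal{D}^{\#\leq 0}$ and $Y\in\mathcal{D}^{\#\geq 1}$, I would truncate with the given t-structure: triangles $\tau^{\leq -1}X\to X\to H^{0}(X)\to$ and $H^{0}(Y)\to Y\to\tau^{\geq 1}Y\to$, where $\tau^{\leq -1}X\in\mathcal{D}^{\leq -1}$, $H^{0}(X)\in\mathcal{T}$, $H^{0}(Y)\in\mathcal{F}$, $\tau^{\geq 1}Y\in\mathcal{D}^{\geq 1}$. Applying $Hom$ to these triangles squeezes $Hom(X,Y)$ between the four groups $Hom(\tau^{\leq -1}X,H^{0}(Y))$, $Hom(\tau^{\leq -1}X,\tau^{\geq 1}Y)$, $Hom(H^{0}(X),H^{0}(Y))$, $Hom(H^{0}(X),\tau^{\geq 1}Y)$; the first, second and fourth vanish because $Hom(\mathcal{D}^{\leq -1},\mathcal{D}^{\geq 0})=Hom(\mathcal{D}^{\leq 0},\mathcal{D}^{\geq 1})=0$, and the third vanishes precisely because $(\mathcal{T},\mathcal{F})$ is a torsion pair. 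For the decomposition axiom, given $E\in\mathcal{D}$ I would put $E'=\tau^{\leq 0}E$, let $T\subset H^{0}(E)$ be the torsion part, and form the triangle $A\to E'\to H^{0}(E)/T\to A[1]$ by composing $E'\to H^{0}(E')=H^{0}(E)$ with the torsion quotient; the long exact sequence gives $A\in\mathcal{D}^{\#\leq 0}$ with $H^{0}(A)=T$. Composing $A\to E'\to E$ and applying the octahedral axiom produces a triangle $A\to E\to B\to A[1]$ together with a triangle $H^{0}(E)/T\to B\to\tau^{\geq 1}E\to$; since $H^{0}(E)/T\in\mathcal{F}$ sits in degree $0$ and $\tau^{\geq 1}E$ in degrees $\geq 1$, the long exact sequence gives $H^{j}(B)=0$ for $j\leq -1$ and $H^{0}(B)\in\mathcal{F}$, i.e. $B\in\mathcal{D}^{\#\geq 1}$. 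Finally, boundedness is inherited: $\mathcal{D}^{\#\leq n}\supseteq\mathcal{D}^{\leq n-1}$ and $\mathcal{D}^{\#\geq n}\supseteq\mathcal{D}^{\geq n+1}$, so $\bigcup_{n}\mathcal{D}^{\#\leq n}=\mathcal{D}=\bigcup_{n}\mathcal{D}^{\#\geq -n}$; and by the previous paragraph the heart of this bounded t-structure is exactly $\langle\mathcal{T},\mathcal{F}[1]\rangle=\mathcal{A}^{\#}$.

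The only genuinely fiddly point is the decomposition step, namely reading off the cohomology of the cone $B$ from the octahedron built over $A\to E'\to E$. Everything else is a mechanical reduction to the given t-structure: the torsion-pair hypothesis is used essentially in exactly one place (the vanishing $Hom_{\mathcal{A}}(\mathcal{T},\mathcal{F})=0$ in the orthogonality step), while the closure of $\mathcal{T}$ under quotients and of $\mathcal{F}$ under subobjects is what makes $A\in\mathcal{D}^{\#\leq 0}$, $B\in\mathcal{D}^{\#\geq 1}$, and the identification of the heart go through.
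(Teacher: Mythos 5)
Your proof is correct and is essentially the standard Happel--Reiten--Sm{\o}l{\o} argument: the paper itself gives no proof of this lemma, citing \cite[Proposition 2.1]{happel1996tilting}, and your construction of the aisle $\mathcal{D}^{\#\leq 0}$ and coaisle $\mathcal{D}^{\#\geq 0}$ via the cohomology functors, the orthogonality check reducing to $Hom_{\mathcal{A}}(\mathcal{T},\mathcal{F})=0$, and the octahedron producing the truncation triangle are exactly what that reference does. No gaps; the auxiliary facts you invoke (closure of $\mathcal{T}$ under quotients and of $\mathcal{F}$ under subobjects) do follow from the paper's definition of a torsion pair, since it forces $\mathcal{T}={}^{\perp}\mathcal{F}$ and $\mathcal{F}=\mathcal{T}^{\perp}$.
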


 In this paper, we are interested in the case when $\mathcal{D}$ is the bounded derived category of coherent sheaves on an algebraic variety $X$ or an admissible component of it. From now on, $X$ will be a smooth projective variety over an algebraically closed field $k$, and $D(X)$ will be the bounded derived category of coherent sheaves on $X$.

\section{Sheaf of t-structures and polynomial functions}\label{Section 3}

 Suppose there exists a stability condition $(\mathcal{A},Z)$ on $D(X)$, where we assume $\mathcal{A}$ is Noetherian, and the image of $Z$ is discrete.

 Let $S$ be a quasi-projective variety of finite type, and $\mathcal{O}(1)$ be an ample line bundle on $S$. Abramovich and Polishchuk defined a sheaf of t-structures and a global heart $\mathcal{A}_S$ for $D(X\times S)$ in their papers \cite{APsheaves} and \cite{polishchuk2007constant}. Here we summarize some of their beautiful properties we need in the next.

(1) The global heart $\mathcal{A}_S$ is independent of the choice of ample line bundle.

(2) If $S$ is projective, then 	$$\mathcal{A}_S=\{E\in D(X\times S)\mid p_*(E \otimes q^*(\mathcal{O}(n)))\in \mathcal{A} \ for\ all \ n\gg 0\},$$ where $p,q$ are projections from $X\times S$ to $X$ and $S$ respectively.

(3) The functor $p^*:D(X)\rightarrow D(X\times S)$ is t-exact, where $p$ is the projection from $X\times S$ to $X$.

(4) For every closed immersion $i_T: T\xhookrightarrow {}S$, the functor $i_{T*}:D(X\times T)\rightarrow D(X\times S)$ is t-exact and $i_T^*$ is t-right exact.

(5) The heart $\mathcal{A}_S$ is Noetherian.

We also need some definitions from \cite[Section 3]{APsheaves}.

\begin{definition}
	We call an object $E\in \mathcal{A}_S$ to be $S$-torsion if it is the push forward of an object $E'\in D(X\times T)$ for some closed subscheme $T\subset S$.
	
	An object $E\in\mathcal{A}_S $ is torsion free with respect to a closed subscheme $T$ if it contains no nonzero torsion subobject supported on $T$. In this case we say that $E$ is $T$-torsion free.
	
	We say that $E$ is torsion free if it contains no torsion subobject, i.e., it is torsion free with respect to any closed subscheme in $S$.
\end{definition}
\begin{definition}
	The object $E\in\mathcal{A}_S$ is called t-flat if $E_s\in\mathcal{A}$ for arbitrary closed point $s\in S$.
\end{definition}

In the construction of the global heart $\mathcal{A}_S$, the most important case is when $S$ is $\mathbb{P}^r$. In \cite[Section 2.3]{APsheaves}, Abramovich and Polishchuk use Koszul complex to decompose $D(X\times \mathbb{P}^r)$ and construct a global t-structure on it. The Koszul complex can be expressed as follows

$$0\rightarrow \mathcal{O}_{\mathbb{P}^r}\rightarrow \Lambda^r V\otimes \mathcal{O}_{\mathbb{P}^r}(1)\rightarrow\cdots\rightarrow V\otimes \mathcal{O}_{\mathbb{P}^r}(r)\rightarrow \mathcal{O}_{\mathbb{P}^r}(r+1)\rightarrow 0, $$where $V=H^0(\mathbb{P}^r,\mathcal{O}_{\mathbb{P}^r}(1))$. It is not only useful in decomposing derived categories, it is also numerically interesting. Indeed, since the dimensions of $\Lambda^iV$ are binomial coefficients, Koszul complex implies a polynomial structure of $Z(p_*(E\otimes q^* (\mathcal{O}(n)))$ for any $E\in\mathcal{A}_{\mathbb{P}^r}$.

Motivated by this observation, we are able to construct  a global weak stability condition on $D(X\times S)$ for any projective variety $S$ of finite type.

\begin{theorem}\label{glabal weak stablity condition}
	For any smooth projective variety $S$ of finite dimension $r$, we define $(\mathcal{A}_S, Z_S)$ as below:
	\\
	$$\mathcal{A}_S=\{E\in D(X\times S)\mid p_*(E \otimes q^*(\mathcal{O}(n)))\in \mathcal{A}\  for\ all\ n\gg0 \}$$
	$$Z_S(E)=\lim_{n\rightarrow +\infty}\frac{Z(p_*(E\otimes q^*(\mathcal{O}(n)))r!}{n^r vol{(\mathcal{O}(1))}},$$where $vol(\mathcal{O}(1))$ is the volume of $\mathcal{O}(1)$. Then this pair is a weak pre-stability condition on $D(X\times S)$.
\end{theorem}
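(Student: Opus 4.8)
The plan is to show that $Z_S$ agrees with the central charge of the derived restriction of $E$ to a generic fibre of $q$, and then to read off the axioms from those of $\sigma=(\mathcal A,Z)$ on $D(X)$.

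First I would establish a Grothendieck--Riemann--Roch identity. Applied to the smooth projective morphism $p\colon X\times S\to X$, for any $E\in D(X\times S)$ it gives
\[
\mathrm{ch}(Rp_*(E\otimes q^*\mathcal O(n)))=p_*\bigl(\mathrm{ch}(E)\cdot q^*(e^{n c_1(\mathcal O(1))}\,\mathrm{td}(T_S))\bigr),
\]
a polynomial in $n$ of degree $\le r=\dim S$ (as $c_1(\mathcal O(1))^{j}=0$ for $j>r$), whose coefficient of $n^{r}$ is $\tfrac1{r!}\,p_*\bigl(\mathrm{ch}(E)\cdot q^*c_1(\mathcal O(1))^{r}\bigr)$. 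Since $c_1(\mathcal O(1))^{r}=\mathrm{vol}(\mathcal O(1))\cdot[\mathrm{pt}]$, its $q$-pullback is $\mathrm{vol}(\mathcal O(1))$ times the class of a fibre $X\times s$ (on which $p$ restricts to an isomorphism) while the remaining Todd factor contributes only in degree $0$, so this coefficient equals $\tfrac{\mathrm{vol}(\mathcal O(1))}{r!}\,\mathrm{ch}(Li^*_{X\times s}E)$ for any closed point $s\in S$. For $E\in\mathcal A_S$ and $n\gg0$ one has $Rp_*(E\otimes q^*\mathcal O(n))\in\mathcal A$, so the limit defining $Z_S(E)$ exists and
\[
Z_S(E)=Z\bigl([Li^*_{X\times s}E]\bigr)\qquad\text{for every closed point } s\in S.
\]
In particular $Z_S$ is a group homomorphism $K(\mathcal A_S)\to\mathbb C$ that factors through the finite-rank lattice $\Lambda$ attached to $\sigma$, is independent of $s$ and of $\mathcal O(1)$, and has discrete image, being a subgroup of the image of $Z$.

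Next I would invoke the structure theory of the Abramovich--Polishchuk heart: every object $E\in\mathcal A_S$ is t-flat over a dense open subset of $S$. Indeed, by \cite{APsheaves} there is an exact sequence $0\to E_t\to E\to E_f\to 0$ in $\mathcal A_S$ with $E_t$ an $S$-torsion object (the pushforward of an object on $X\times T$ for some closed $T\subsetneq S$) and $E_f$ torsion-free, and torsion-free objects of $\mathcal A_S$ are t-flat over a dense open of $S$. Choosing a closed point $s$ lying outside $T$ and inside the t-flat locus of $E_f$, we get $Li^*_{X\times s}E_t=0$, hence $Li^*_{X\times s}E\cong Li^*_{X\times s}E_f\in\mathcal A$.

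Combining the two steps, for every $E\in\mathcal A_S$ we may write $Z_S(E)=Z(F)$ with $F=Li^*_{X\times s}E\in\mathcal A$ for a generic closed point $s$. Since $Z$ is a stability function on $\mathcal A$, this gives $Im(Z_S(E))\ge0$, and $Im(Z_S(E))=0$ forces $Re(Z_S(E))\le0$; thus $Z_S$ is a weak stability function on $\mathcal A_S$. As $\mathcal A_S$ is Noetherian and $Z_S$ has discrete image, Harder--Narasimhan filtrations exist (cf.\ \cite[Proposition 2.4]{bridgeland2007stability}). Therefore $(\mathcal A_S,Z_S)$ is a weak pre-stability condition.

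The main obstacle is the second step. The first step is routine Chern-character bookkeeping, but the assertion that an arbitrary object of the Abramovich--Polishchuk heart is generically t-flat over $S$ --- equivalently, that the torsion theory on $\mathcal A_S$ relative to $S$ behaves as in $\mathrm{Coh}(X\times S)$ --- is the substantive input, and it is precisely what is established in \cite{APsheaves} and \cite{polishchuk2007constant}; the whole argument rests on it.
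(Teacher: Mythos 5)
There is a genuine gap in your first step. Grothendieck--Riemann--Roch computes the Chern character $\mathrm{ch}(p_*(E\otimes q^*\mathcal{O}(n)))$, but the quantity the theorem concerns is $Z(p_*(E\otimes q^*\mathcal{O}(n)))$, where $Z$ is an arbitrary group homomorphism $K(D(X))\to\Lambda\to\mathbb{C}$ through a finite-rank lattice: nothing in the hypotheses makes $Z$ factor through $\mathrm{ch}$, nor through rational or numerical equivalence. So your GRR identity does not yield that $L_E(n)=Z(p_*(E\otimes q^*\mathcal{O}(n)))$ is a polynomial in $n$, and in particular the existence of the limit defining $Z_S$ --- the first thing that must be proved --- is not established. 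A second, related problem is that $c_1(\mathcal{O}(1))^{r}=\mathrm{vol}(\mathcal{O}(1))\cdot[\mathrm{pt}]$ holds in cohomology but not in $CH_0(S)$ (zero-cycles of equal degree need not be rationally equivalent), and correspondingly the class $[Li^*_{X\times s}E]\in K(X)$ genuinely depends on $s$ in general (already for $E=\mathcal{O}_\Delta$ on a product of elliptic curves). Even a correct $K$-theoretic version of your computation would therefore only identify $r!$ times the leading coefficient with $[E|_{X\times D}]$ for $D$ the intersection of $r$ general members of $|\mathcal{O}(1)|$, i.e.\ $Z_S(E)=\tfrac{1}{\mathrm{vol}}\,Z([E|_{X\times D}])$, not $Z([E_s])$ for a single generic $s$. (Your argument is fine in the special case where $Z$ is numerical, but the theorem does not assume this.)

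The repair is to stay inside $K(D(X))$ and induct on $\dim S$, which is what the paper does. After splitting off the maximal $H$-torsion subobject for a general smooth $H\in|\mathcal{O}(1)|$ --- your second step is essentially this reduction, and that part is sound, resting as you say on the Abramovich--Polishchuk torsion theory --- one has $E|_{X\times H}\in\mathcal{A}_H$, and the sequence $0\to q^*\mathcal{O}(n-1)\to q^*\mathcal{O}(n)\to q^*\mathcal{O}(n)|_H\to 0$ tensored with $E$ and pushed forward becomes a short exact sequence in $\mathcal{A}$ for $n\gg0$. This gives the finite-difference identity $L_E(n)-L_E(n-1)=L_{E|_H}(n)$ as an identity of values of $Z$ on actual classes in $K(D(X))$; induction on $\dim S$ then yields polynomiality of $L_E$, the location of its leading coefficient in $\mathbb{H}\cup\mathbb{R}_{\leq 0}$, and discreteness of the image of $Z_S$. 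After that, your concluding paragraph (Noetherianity of $\mathcal{A}_S$ plus discreteness of $Z_S$ gives Harder--Narasimhan filtrations) goes through as written.
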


\begin{proof}
	
	It is easy to see that $(\mathcal{A}_S,Z_S)$ does not change if we change $\mathcal{O}(1)$ to $\mathcal{O}(N)$ for $N\in\mathbb{N}_{>0}$, so we can assume that $\mathcal{O}(1)$ is very ample.	
	
	The definition of $\mathcal{A}_S$ is just taken from \cite{polishchuk2007constant}. We need to prove that $Z_S$ is a weak stability function on $\mathcal{A}_S$.

Suppose $E\in\mathcal{A}_S$ and set $L_E(n):=Z(p_*(E\otimes q^*(\mathcal{O}(n)))$, we claim that $L_E(n)$ is a polynomial of degree no more than $r$, and its leading coefficient lies in $\mathbb{H}\cup \mathbb{R}_{\leq 0}$ for $n\gg 0$. We will prove it by induction on $r$. When $r=0$, the claim is obvious. 
	
	We assume the claim is true for $r\leq i-1$, then prove it for $r=i$. As $k$ is algebraically closed, we can take a general smooth divisor $H\in|\mathcal{O}(1)|$.   Since $\mathcal{A}_S$ is Noetherian, as in \cite[Corollary 3.1.3]{APsheaves} we have the following exact sequence 
	
	$$0\rightarrow F\rightarrow E\rightarrow \bar{E}\rightarrow 0,$$where $F$ is the maximal torsion subobject of $E$ supported over $H$, and $\bar{E}$ is $H$-torsion free. By induction, $L_F(n)$ is a polynomial of degree strictly less than $i$. Therefore, we can assume $E$ is torsion free with respect to $H$.
	
	By the sequence 
	$$0\rightarrow\mathcal{O}(n-1)\rightarrow\mathcal{O}(n)\rightarrow\mathcal{O}(n)|_H\rightarrow0,$$
	we have the following exact sequence 
	$$0\rightarrow q^*\mathcal{O}(n-1)\rightarrow q^*\mathcal{O}(n)\rightarrow q^*\mathcal{O}(n)|_H\rightarrow0$$
	by flatness of $q$, which gives us a triangle 	$$p_*(E\otimes q^*\mathcal{O}(n-1))\rightarrow p_*(E\otimes q^*\mathcal{O}(n))\rightarrow p_*(E\otimes q^*c_*c^*\mathcal{O}(n))\xrightarrow{[1]}p_*(E\otimes q^*\mathcal{O}(n-1))[1]$$where $c:H\rightarrow S$ is the natural inclusion. Note that we have the following commutative diagram
	
	\[ \begin{tikzcd}
	H\times X \arrow{r}{c\times id} \arrow[swap]{d}{q|_H} & S\times X  \arrow{d}{q} \\%
	H \arrow{r}{c}& S\nospaceperiod
	\end{tikzcd}
	\]

	By derived flat base change and projection formula (see \cite[Section 3.3]{huybrechts2006fourier}), we know that $p_*(E\otimes q^*c_*c^*\mathcal{O}(n))=p_*(E\otimes(c\times id)_*q|_H^*c^*\mathcal{O}(n))=p_*(c\times id)_*((c\times id)^*E\otimes q|_H^*c^*\mathcal{O}(n))$. 
	Since $E$ is $H$-torsion free, by \cite[Corollary 3.1.3]{APsheaves}, we have $(c\times id)^*E\in \mathcal{A}_H$.
	
	Therefore, this triangle is a short exact sequence in $\mathcal{A}$ for $n$ sufficiently large. We get $L_E(n)-L_E(n-1)=L_{E|_H}(n)$ where $E|_H\coloneqq (c\times id)^*E$. By induction, $L_{E|_H}(n)$ is a polynomial of degree not bigger than $i-1$, so the degree of $L_E(n)$ is not bigger than $i$. The leading coefficient of $L_{E|_H}(n)$ is an integral multiple of the leading coefficient of $L_E(n)$, therefore, they both lie in $\mathbb{H}\cup \mathbb{R}_{\leq 0}$ by induction. So we proved that $Z_S$ is a weak stability function.
	
	Now for HN-filtration, we know that $\mathcal{A}_S$ is Noetherian by our assumption and \cite[Theorem 3.3.6]{polishchuk2007constant}. Then it suffices to prove that the image of $Z_S$ is discrete. This can be done similarly by induction on the dimension of $S$. When $dim(S)=0$, it is our assumption that image of $Z$ is discrete. From the equation $L_E(n)-L_E(n-1)=L_{E|_H}(n)$, we get $Z_S(E)=Z_H(E|_H)$, hence the inductive step holds. Therefore, $(\mathcal{A}_S,Z_S)$ is a weak pre-stability condition.
	
\end{proof}

\begin{remark}
	The proof and definition of the weak stability function is similar to the way we define the Hilbert polynomial and take its leading exponential coefficient. For instance,
	if we take $X=Spec(\mathbb{C})$, $\mathcal{A}$ is the category of $\mathbb{C}$-vector spaces and $Z(V)=i\cdot dim(V)$ for any finite dimensional $\mathbb{C}$ vector space, then the global heart is the category of coherent sheaves on $S$, and $L_E(n)=i\cdot Hilb_E(n)\ for\ n\gg 0$. 
	
	The construction of weak pre-stability conditions can be generalized to the case when $S$ is a quasi-projective variety of finite type, but we will not use the quasi-projective case in this paper. Using the methods in Section \ref{Support}, we can prove that $(\mathcal{A}_S, Z_S)$ satisfy the support property with respect to the same lattice $\Lambda$ and quadratic form $Q$ as $(\mathcal{A},Z)$ do, but this will not be used in this paper either.
	
\end{remark}

 \begin{corollary}\label{Independence corollary} Let $S$ be a smooth projective variety of finite type.
 	
 	(a) If $E=p^*F\otimes q^*L$, where $F\in\mathcal{A}$ and $L$ is an arbitrary line bundle over $S$, then $Z_S(E)=Z(F)$. Moreover, $Z_S$ is independent of the choice of ample line bundle $\mathcal{O}(1)$. 
 	\par 
 	(b) If $E\in\mathcal{A}_S$ and $\{s\in S|i_s^*E\in \mathcal{A} \ and\ i_s^*E\neq 0\}$ contains an open dense subset in $S$, where $i_s:X\times\{s\}  \rightarrow X\times S$ is the natural inclusion, then $Z_S(E)\neq 0$.
 
 \end{corollary}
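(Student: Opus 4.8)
The plan is to deduce both parts from the single identity
\[
Z_S(E)=Z\bigl(i_s^{*}E\bigr)\qquad\text{for }E\in\mathcal{A}_S\text{ and }s\in S\text{ a general closed point},
\]
where the right hand side is read through the numerical class $v(i_s^{*}E)\in\Lambda$, which is constant for $s$ in the dense open set over which $E$ is flat, since numerical invariants are locally constant in flat families and $\Lambda$ has finite rank. Granting it, part (a) is immediate: for $E=p^{*}F\otimes q^{*}L$ we have $i_s^{*}E\cong F$ because $L|_{X\times\{s\}}$ is one--dimensional, so $Z_S(E)=Z(F)$; and the identity exhibits $Z_S(E)$ without reference to $\mathcal{O}(1)$, so $Z_S$ is polarization--independent. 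Part (b) follows as well: pick a general $s$ in the given dense open set, so $i_s^{*}E\in\mathcal{A}$ with $i_s^{*}E\neq0$; since $(\mathcal{A},Z)$ is a stability condition, $Z$ is a stability function on $\mathcal{A}$, hence $Z(i_s^{*}E)\in\mathbb{H}\cup\mathbb{R}_{<0}$, and in particular $Z_S(E)=Z(i_s^{*}E)\neq0$.

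To prove the identity I would first dispatch the case $E=p^{*}F\otimes q^{*}L$ by a direct computation: the projection formula gives $p_{*}\bigl(p^{*}F\otimes q^{*}(L\otimes\mathcal{O}(n))\bigr)=F\otimes p_{*}q^{*}(L\otimes\mathcal{O}(n))$, flat base change along $X\times S=X\times_{k}S$ identifies $p_{*}q^{*}(L\otimes\mathcal{O}(n))$ with $R\Gamma(S,L\otimes\mathcal{O}(n))\otimes_{k}\mathcal{O}_{X}$, and Serre vanishing makes it $H^{0}(S,L\otimes\mathcal{O}(n))\otimes_{k}\mathcal{O}_{X}$ for $n\gg0$, so $L_{E}(n)=\chi(S,L\otimes\mathcal{O}(n))\cdot Z(F)$ whose leading term (asymptotic Riemann--Roch) gives $Z_S(E)=Z(F)$. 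For general $E\in\mathcal{A}_S$ the quickest route is Grothendieck--Riemann--Roch for the smooth proper projection $p$: from $\mathrm{ch}\,p_{*}(E\otimes q^{*}\mathcal{O}(n))=p_{*}\bigl(\mathrm{ch}(E)\cdot q^{*}\mathrm{td}(T_{S})\cdot q^{*}e^{nc_{1}(\mathcal{O}(1))}\bigr)$ one reads that the coefficient of $n^{r}$ is $\tfrac{1}{r!}p_{*}\bigl(\mathrm{ch}(E)\cdot q^{*}c_{1}(\mathcal{O}(1))^{r}\bigr)=\tfrac{\mathrm{vol}(\mathcal{O}(1))}{r!}\mathrm{ch}(i_s^{*}E)$ (using that $c_{1}(\mathcal{O}(1))^{r}$ is a zero--cycle of degree $\mathrm{vol}(\mathcal{O}(1))$ and $p\circ i_s=\mathrm{id}_{X}$); dividing by $\mathrm{vol}(\mathcal{O}(1))/r!$ and recalling $\mathrm{vol}(\mathcal{O}(1))=\deg_{\mathcal{O}(1)}S$ yields $Z_S(E)=Z(i_s^{*}E)$.

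Alternatively, and entirely inside the paper's framework, one iterates the identity $Z_S(E)=Z_H(E|_H)$ established inside the proof of Theorem~\ref{glabal weak stablity condition}. After replacing $\mathcal{O}(1)$ by a Veronese power (which does not change $Z_S$) we may take it very ample, choose general $H_{1},\dots,H_{r}\in|\mathcal{O}(1)|$, and descend along $S\supset H_{1}\supset H_{1}\cap H_{2}\supset\cdots\supset Y_{r}:=H_{1}\cap\cdots\cap H_{r}$, a reduced set of $\mathrm{vol}(\mathcal{O}(1))$ points, at which the definition gives $Z_{Y_{r}}(G)=\tfrac{1}{\mathrm{vol}(\mathcal{O}(1))}\sum_{s\in Y_{r}}Z(G|_{s})$; since the torsion discarded at each step is supported on a proper closed subset of the corresponding hyperplane, it affects neither the top coefficient nor the restriction at a general point of $Y_{r}$, so $Z_S(E)=\tfrac{1}{\mathrm{vol}(\mathcal{O}(1))}\sum_{s\in Y_{r}}Z(i_s^{*}E)$, which collapses to $Z(i_s^{*}E)$ by constancy of the fibre class. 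Even without invoking that constancy, this last formula already suffices for part (b): one takes all the points of $Y_{r}$ inside the given dense open set, so every summand lies in $\mathbb{H}\cup\mathbb{R}_{<0}$, which is a convex cone, stable under addition, that does not contain $0$.

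I expect the main obstacle to be the torsion bookkeeping in this second route: at each descent step one passes to the quotient by the maximal torsion subobject supported on the next hyperplane, must check via \cite[Corollary 3.1.3]{APsheaves} that this quotient restricts into $\mathcal{A}_{H_{i}}$, must see that the discarded torsion has $L$--polynomial of strictly smaller degree, and must track how the chain $\mathrm{vol}(\mathcal{O}(1))=\mathcal{O}(1)^{r}=(\mathcal{O}(1)|_{H_{1}})^{r-1}=\cdots$ of volumes absorbs the successive drops in $r!$ --- precisely the computation carried out once inside the proof of Theorem~\ref{glabal weak stablity condition}, now iterated. The Grothendieck--Riemann--Roch route avoids all of this, at the cost of Riemann--Roch with $\mathbb{Q}$--coefficients, which is available over any field. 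The remaining delicate point, shared by both routes, is that $v(i_s^{*}E)\in\Lambda$ is independent of the general point $s$; this holds because $\Lambda$ has finite rank, so $v$ factors through numerical equivalence, under which the restriction of a fixed cycle class to the fibres of $X\times S\to S$ is constant.
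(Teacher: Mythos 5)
Your computation for the first half of (a) (projection formula, flat base change, Serre vanishing, asymptotic Riemann--Roch, giving $L_{p^*F\otimes q^*L}(n)=\chi(S,L\otimes\mathcal{O}(n))\,Z(F)$) is exactly the paper's argument, and your treatment of (b) --- descend along $S\supset H_1\supset\cdots\supset Y_r$, discard torsion at each step, and observe that a sum of values of $Z$ on nonzero objects of $\mathcal{A}$ lies in the additively closed cone $\mathbb{H}\cup\mathbb{R}_{<0}$, which avoids $0$ --- is the paper's induction on $\dim S$ (one smooth divisor at a time, bottoming out at dimension $0$) in compressed form. Those parts are sound.

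The genuine gap is in the master identity $Z_S(E)=Z(i_s^*E)$ and, with it, in your proof of the second half of (a). Both of your justifications silently assume that $Z$ factors through numerical (or cohomological) equivalence, which the paper never assumes: in Definition \ref{first WSC} the lattice $\Lambda$ is an arbitrary finite-rank quotient through which $Z$ factors, and ``finite rank'' does not imply ``numerical''; a homomorphism $K(D(X))\to\mathbb{C}$ only factors through $K(X)\otimes\mathbb{Q}\cong CH(X)_{\mathbb{Q}}$, nothing smaller. Concretely, in the Grothendieck--Riemann--Roch route the step replacing $p_*\bigl(\mathrm{ch}(E)\cdot q^*c_1(\mathcal{O}(1))^r\bigr)$ by $\mathrm{vol}(\mathcal{O}(1))\cdot\mathrm{ch}(i_s^*E)$ needs $c_1(\mathcal{O}(1))^r$ to be \emph{rationally} equivalent to $\mathrm{vol}(\mathcal{O}(1))\cdot[s]$, which already fails for $S$ a curve of positive genus; likewise $\mathrm{ch}(i_s^*E)\in CH(X)_{\mathbb{Q}}$ need not be constant in a flat family. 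The constancy of $Z(E_s)$ for t-flat $E$ is true, but in the paper it is a separate statement (Lemma \ref{constant central charge}) proved \emph{by} the divisor-restriction induction, not a soft consequence of flatness. Without it, your descent only yields $Z_S(E)=\frac{1}{\mathrm{vol}(\mathcal{O}(1))}\sum_{s\in Y_r}Z(i_s^*E)$ with $Y_r$ depending on the polarization, so the independence of $Z_S$ from $\mathcal{O}(1)$ is not established. The paper closes this by an elementary spanning argument you omit: line bundles generate $K(D(S))$ for $S$ smooth, so the classes $p^*F\otimes q^*L$ span $K(D(X\times S))$, and $Z_S$ is determined by its values $Z(F)$ on them, which make no reference to $\mathcal{O}(1)$. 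Either substitute that argument, or first prove Lemma \ref{constant central charge} by the divisor induction and only then invoke constancy of the fibre class.
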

 \begin{proof}
 	We deal with the untwisted case $E=p^*F$ first. The following equation hold:
 	\begin{equation*}
 	\begin{split}
 	Z_S(p^*F) & = \lim_{n\rightarrow +\infty}\frac{Z(p_*(p^*F\otimes q^*(\mathcal{O}(n))))r!}{n^rvol(\mathcal{O}(1))} \\ & = \lim_{n\rightarrow +\infty}\frac{Z(F\otimes H^0(S,\mathcal{O}(n)))r!}{n^rvol(\mathcal{O}(1))} \\ & = \lim_{n\rightarrow +\infty}\frac{Z(F)\frac{Vol(\mathcal{O}(1))}{r!} n^rr!}{n^rvol(\mathcal{O}(1))} \\ & = Z(F).
 	\end{split}
 	\end{equation*}
 	
 	The second equation comes from projection formula, and the third equation follows from Asymptotic Riemann-Roch and Serre vanishing (see \cite[Corollary 1.1.25]{lazarsfeld2004positivity}). It is easy to see that twisting $p^*F$ by $q^*L$ will not effect this equation. This proves the first half of (a).

 	For the independence of $Z_S$ on the choice of $\mathcal{O}(1)$, we know the line bundles generate $K(D(S))$ for $S$ smooth. Hence, the objects of the form $p^*F\otimes q^*L$ span the group $K(D(X\times S))$. Therefore, $Z_S$ is determined by its value on $p^*F\otimes q^*L$. Since $Z_S( p^*F\otimes q^*L)=Z(F)$ is independent of the choice of $\mathcal{O}(1)$, we proved the independence of $Z_S$ on the choice of $\mathcal{O}(1)$.

 	For (b), similarly as in previous theorem, we have the following sequence 
 	$$0\rightarrow F\rightarrow E\rightarrow \bar{E}\rightarrow 0,$$where $F$ is the maximal torsion subobject of $E$, and $\bar{E}$ is torsion free. It is easy to see that $\{s\in S|i_s^*\bar{E}\in\mathcal{A}\ and\ i_s^*\bar{E}\neq 0\}$ contains an open dense subset in $S$. Therefore, we can assume $E$ is torsion free. Since $Z_S$ is independent of the choice of the ample line bundle, we can choose $\mathcal{O}(1)$ to be globally generated.  Because of the smoothness of $S$ we are able to find a smooth divisor $D$ in the linear system $|\mathcal{O}(1)|$ such that $D\cap \{s\in S|i_s^*E\in\mathcal{A}\ and\ i_s^*E\neq 0\}$ is open and dense in $D$. Since $E$ is torsion free, we have $E|_D\in \mathcal{A}_D$, and we know that the leading coefficient of $L_E(n)$ is the leading coefficient of $L_{E|_D}(n)$ times a nonzero constant (the constant is the reciprocal of dimension of $S$, which implies $Z_S(E)=Z_D(E|_D)$). This finishes the proof by induction.
 \end{proof}

\begin{remark}
	Though $Z_S$ is independent of the choice of ample line bundle $\mathcal{O}(1)$, $L_E(n)$ is definitely dependent of the choice of ample line bundle $\mathcal{O}(1)$. We suppress this dependence in our notation for simplicity.
\end{remark}

From the polynomial $L_E(n)$, we have two ways to define a slope of an object $E\in\mathcal{A}_S$.

(1) The first one only cares about $Z_S(E)$. We define $\mu_1(E)$ as
$$\mu_1(E)\coloneqq\begin{cases} -\frac{Re(Z_S(E))}{Im(Z_S(E))} &\text{if} \ Im(Z_S(E))> 0,\\ +\infty &\text{otherwise.}\end{cases}$$

(2) The second one is the slope of the first nonzero coefficient of $L_E(n)$:
$$\mu_2(E)\coloneqq\begin{cases} -\ \underset{n\rightarrow +\infty}{\lim}\frac{Re(L_E(n))}{Im(L_E(n))} &\text{if it is well defined,}\\ +\infty  &\text{otherwise.}\end{cases}$$

We use $$\psi(E)=\frac{-cot^{-1}(\mu_2(E))}{\pi}$$ to denote the phase of $E$. Then $0<\psi(E)\leq 1$ for $E\in \mathcal{A}_S$.
\begin{remark}
	 In the second case, unlike the usual slope function, any subobjects have smaller or equal slope is not equivalent to that any quotient objects have bigger or equal slope. Therefore, we define $E$ to be semi-stable with respect to $\mu_2$, if for any subobject $F\subset E$, we have $\mu_2(F)\leq \mu_2(E)$ and for any quotient objects $G$ of $E$, we have $\mu_2(G)\geq \mu_2(E)$.
\end{remark}

The semi-stability with respect to $\mu_2$ is closely related to the slicing constructed in \cite[Section 4]{bayer2014mmp}.

More specifically, given a stability condition $\sigma=(Z, \mathcal{P})$ on $D(X)$ and a phase $\phi\in\mathbb{R}$, then we have its associated t-structure $\mathcal{P}(>\phi)=\mathcal{D}^{\leq -1}$, $\mathcal{P}(\leq\phi)=\mathcal{D}^{\geq 0}$. By Abramovich and Polishchuk's construction, we get $\mathcal{P}_S(>\phi)$, $\mathcal{P}_S(\leq \phi)$ as a t-structure on $D(X\times S)$. Then we have the following lemma in \cite[Section 4]{bayer2014mmp}.

\begin{lemma}[{\cite[Lemma 4.6]{bayer2014mmp}}]\label{global slicing}
	Assume $\sigma=(Z, \mathcal{P})$ is a stability condition as in our setup, and $\mathcal{P}_S(>\phi)$, $\mathcal{P}_S(\leq \phi)$ defined as above. There is a slicing $\mathcal{P}_S$ on $D^b(X\times S)$ defined by
	
	$$\mathcal{P}_S(\phi)=\mathcal{P}_S(\leq \phi)\cap \underset{\epsilon>0}{\cap}\mathcal{P}_S(>\phi-\epsilon).$$
\end{lemma}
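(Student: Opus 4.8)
The plan is to regard the data $\{(\mathcal P_S(>\phi),\mathcal P_S(\le\phi))\}_{\phi\in\mathbb R}$ as a family of bounded t-structures on $D(X\times S)$ and to check that it is decreasing in $\phi$, compatible with the shift, and ``locally finite'' on every object; once these are in hand, the verification of the three slicing axioms for the subcategories $\mathcal P_S(\phi)$ defined by the intersection formula is essentially formal. First I would record that the Abramovich--Polishchuk construction is exact and functorial for inclusions of hearts: since $\mathcal P(>\phi')\subseteq\mathcal P(>\phi)$ for $\phi\le\phi'$ and $\mathcal P(>\phi+1)=\mathcal P(>\phi)[1]$ in $D(X)$, and since $p_*(-\otimes q^*\mathcal O(n))$ commutes with the shift, one gets $\mathcal P_S(>\phi')\subseteq\mathcal P_S(>\phi)$ and $\mathcal P_S(>\phi+1)=\mathcal P_S(>\phi)[1]$; this uses properties (2)--(4) of the AP construction recalled above, applied to the tilted hearts $\mathcal A^{\phi}\subset D(X)$ (which remain Noetherian with discrete central charge under our standing hypotheses). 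Granting this, slicing axiom (a), $\mathcal P_S(\phi+1)=\mathcal P_S(\phi)[1]$, is obtained by applying $[1]$ to the defining intersection; and axiom (b) follows because, for $A_j\in\mathcal P_S(\phi_j)$ with $\phi_1>\phi_2$, one may choose $\epsilon>0$ with $\phi_1-\epsilon\ge\phi_2$, so that $A_1\in\mathcal P_S(>\phi_1-\epsilon)\subseteq\mathcal P_S(>\phi_2)$ and $A_2\in\mathcal P_S(\le\phi_2)$ lie in the two aisles of a single t-structure, forcing $\mathrm{Hom}(A_1,A_2)=0$.

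The real content is axiom (c), the existence of a \emph{finite} Harder--Narasimhan filtration of any $0\ne E\in D(X\times S)$ with factors in the $\mathcal P_S(\phi)$. I would split this into boundedness and local finiteness. For boundedness, I would use the (projective) characterization of the AP t-structures together with cohomology and base change and Serre vanishing: for $n\gg0$ the object $p_*(E\otimes q^*\mathcal O(n))$ has $\mathcal A$-cohomology concentrated in a fixed finite range of degrees, and since $\mathcal A$ is Noetherian with discrete $Z$ its HN phases lie in a fixed bounded window, which places $E$ in $\mathcal P_S([a,b])$ for suitable $a<b$. (Alternatively, one reduces boundedness to the generators $p^*F\otimes q^*L$ of $D(X\times S)$ and uses the t-exactness of $p^*$, property (3), together with the polarization-independence of the construction.) For local finiteness inside the window $[a,b]$, I would note that the heart $\mathcal A_S^{\phi}$ of $(\mathcal P_S(>\phi),\mathcal P_S(\le\phi))$ is the AP heart attached to $\mathcal A^{\phi}$, hence is Noetherian by property (5) and carries the weak stability function $Z_S^{\phi}$ provided by (the proof of) Theorem~\ref{glabal weak stablity condition} with discrete image; so $E$ has a finite HN filtration inside $\mathcal A_S^\phi$, and refining by the phase $\psi$ (equivalently by the $\mu_2$-slope of the polynomial $L_{(-)}(n)$) across the finitely many relevant values assembles, together with the finitely many shifts dictated by boundedness, the desired filtration of $E$. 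Checking uniqueness of this filtration is then the usual consequence of axiom (b).

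The step I expect to be the main obstacle is precisely this last point: showing that the AP construction is compatible with the full \emph{slicing} structure on $D(X)$ and not merely with each individual t-structure — concretely, that as $n\to\infty$ the phases occurring in $p_*(E\otimes q^*\mathcal O(n))$ stay in a fixed finite window and jump only finitely often, so that the HN data of $E$ in the various tilted hearts $\mathcal A_S^\phi$ patch coherently. This is where the discreteness of the image of $Z$ (hence of $Z_S$) and the Noetherianity of $\mathcal A_S$ are indispensable, and where the polynomial behaviour of $L_E(n)$ established in the proof of Theorem~\ref{glabal weak stablity condition} is the technical device that keeps everything finite. One could equivalently phrase the whole argument as a verification of the hypotheses of the standard dictionary between slicings and suitably bounded, locally finite decreasing families of t-structures, which repackages exactly the three points above.
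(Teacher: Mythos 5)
A preliminary remark on the comparison itself: the paper offers no proof of this statement --- it is quoted as \cite[Lemma 4.6]{bayer2014mmp} and used as a black box --- so the only meaningful benchmark is the argument in that reference. Your outline follows the same general strategy (a decreasing, shift-compatible, bounded family of induced t-structures, with the slices defined by the intersection formula), and the formal parts are fine: monotonicity and shift-compatibility of $\phi\mapsto\mathcal{P}_S(>\phi)$ do follow from the characterization of the Abramovich--Polishchuk aisles, axiom (b) follows exactly as you say, and boundedness follows from the boundedness of the t-structure with heart $\mathcal{A}_S$. (One small unargued point: to even form $\mathcal{P}_S(>\phi)$ for every real $\phi$ you need the tilted hearts $\mathcal{P}((\phi,\phi+1])$ to satisfy the hypotheses of the Abramovich--Polishchuk construction; this requires noting that $e^{-i\pi\phi}Z$ still has discrete image, so that these hearts are again Noetherian.)

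The genuine gap is the one you name yourself and then do not close: local finiteness, i.e.\ that for $E\in\mathcal{A}_S$ the truncations $E_{>\phi}\subseteq E$ change at only finitely many $\phi\in(0,1]$, so that the intersection formula actually produces a finite HN filtration. Your proposed mechanism --- take the finite HN filtration with respect to the weak stability function of Theorem \ref{glabal weak stablity condition} and then ``refine by the phase $\psi$ across the finitely many relevant values'' --- does not close the gap as stated, for three reasons. First, semistability for $Z_S$ (the slope $\mu_1$) is strictly coarser than membership in $\mathcal{P}_S(\phi)$: the ideal-sheaf example following Proposition \ref{semistable reduction} is $\mu_1$-semistable but lies in no single slice, so the $Z_S$-HN filtration is not the slicing filtration and the required refinement is the whole difficulty. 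Second, the finiteness of the set of ``relevant values'' of $\psi$ is exactly the assertion to be proved; discreteness of $Z_S$ cannot see it, because the subquotients appearing in the refinement can have $Z_S=0$ (only the lower-order coefficients of $L_E(n)$ detect them). Third, using $\psi$ (equivalently $\mu_2$) to detect the slices is the content of Lemma \ref{equal} and Proposition \ref{equivalenc}, both of which are deduced in this paper \emph{from} the present lemma, so appealing to them here would be circular. A correct completion has to establish local finiteness directly from the Noetherianity of $\mathcal{A}_S$ and the structure of the induced t-structures, as is done in \cite{bayer2014mmp}; as written, your proposal reduces the lemma to its hardest step and stops there.
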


\begin{lemma}\label{equal}
	If $E\in \mathcal{P}_S(\phi)$, then $\psi(E)=\phi$.

\end{lemma}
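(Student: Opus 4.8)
The plan is to compare the two slope functions $\mu_1$ and $\mu_2$ (equivalently, the phases coming from $Z_S$ and from the polynomial $L_E(n)$) for an object $E$ lying in a single slice $\mathcal{P}_S(\phi)$. First I would recall how $\mathcal{P}_S(\phi)$ sits inside the Abramovich--Polishchuk heart: by Lemma \ref{global slicing}, $\mathcal{P}_S(\phi) = \mathcal{P}_S(\le\phi)\cap\bigcap_{\epsilon>0}\mathcal{P}_S(>\phi-\epsilon)$, and the $t$-structures $\mathcal{P}_S(>\psi)$, $\mathcal{P}_S(\le\psi)$ are obtained by applying the Abramovich--Polishchuk construction to the $t$-structures $\mathcal{D}^{\le -1}=\mathcal{P}(>\psi)$, $\mathcal{D}^{\ge 0}=\mathcal{P}(\le\psi)$ on $D(X)$. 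The key mechanism of the proof is the identity established in the proof of Theorem \ref{glabal weak stablity condition}, namely $L_E(n)-L_E(n-1)=L_{E|_H}(n)$ for a general smooth hyperplane section $H$ (after splitting off the $H$-torsion, which lowers the degree of $L_E$), together with the induction on $\dim S$ that shows $Z_S(E)=Z_H(E|_H)$.

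The core step is the case where $E$ is $t$-flat, i.e.\ $E_s\in\mathcal{A}$ for all closed points $s\in S$; I would reduce to this case first. The point is that when $E\in\mathcal{P}_S(\phi)$, its restrictions $E_s$ lie in $\mathcal{P}(\phi)\subset\mathcal{A}$, so the central charges $Z(E_s)$ all have argument exactly $\pi\phi$. One then argues that $L_E(n)$, being a polynomial whose values are, up to the flat-base-change identities above, built out of these restricted central charges, has leading coefficient with argument $\pi\phi$; the cleanest way to see this is to run the induction on $r=\dim S$ using $L_E(n)-L_E(n-1)=L_{E|_H}(n)$, observing that for $E\in\mathcal{P}_S(\phi)$ the restriction $E|_H=(c\times\mathrm{id})^*E$ lies in $\mathcal{P}_H(\phi)$ (this uses compatibility of the Abramovich--Polishchuk construction with the closed immersion $c\colon H\hookrightarrow S$, property (4) in Section \ref{Section 3} applied to the tilted hearts defining $\mathcal{P}_S$). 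By induction the leading coefficient of $L_{E|_H}(n)$ has argument $\pi\phi$, and since the leading coefficient of $L_E(n)$ is, up to a positive rational multiple, that of $L_{E|_H}(n)$, the same holds for $L_E$. Hence $\mu_2(E)=-\cot(\pi\phi)$ and $\psi(E)=\phi$. For the base case $\dim S=0$ one just has $E\in\mathcal{P}(\phi)\subset\mathcal{A}$, so $Z(E)=m(E)e^{i\pi\phi}$ and $\psi(E)=\phi$ directly.

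For the general (non-$t$-flat) object $E\in\mathcal{P}_S(\phi)$, I would still like to extract the statement from the $t$-flat case. Here the natural move is to note that $E$ is $t$-flat over a dense open $U\subset S$ in an appropriate sense (on the open locus where the restrictions land in $\mathcal{A}$), exactly as in the proof of Corollary \ref{Independence corollary}(b): choosing $\mathcal{O}(1)$ globally generated and a general smooth divisor $D$, one gets $E|_D\in\mathcal{A}_D$ and $Z_S(E)=Z_D(E|_D)$, and moreover the whole argument for the leading coefficient of $L_E(n)$ reduces dimension by dimension to the zero-dimensional fibers, which lie in $\mathcal{P}(\phi)$. Running this descent and combining it with the observation that $\mathcal{P}_S(\phi)$ is preserved under restriction to general smooth divisors yields that the argument of the leading coefficient of $L_E(n)$ is $\pi\phi$, i.e.\ $\psi(E)=\phi$.

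The main obstacle I anticipate is justifying that $\mathcal{P}_S(\phi)$ is stable under restriction to a general smooth hyperplane section — that is, that $E\in\mathcal{P}_S(\phi)$ implies $E|_H\in\mathcal{P}_H(\phi)$. This requires knowing that the Abramovich--Polishchuk construction of $\mathcal{P}_S(>\psi)$ and $\mathcal{P}_S(\le\psi)$ commutes with the pullback along $c\colon H\hookrightarrow S$ in the precise sense that $(c\times\mathrm{id})^*$ sends $\mathcal{P}_S(>\psi)$ into $\mathcal{P}_H(>\psi)$ and $\mathcal{P}_S(\le\psi)$ into $\mathcal{P}_H(\le\psi)$, which should follow from property (4) in Section \ref{Section 3} (t-exactness of $i_{T*}$ and right-t-exactness of $i_T^*$) applied to the tilted hearts, but needs to be checked carefully for the intersection $\bigcap_{\epsilon>0}\mathcal{P}_S(>\phi-\epsilon)$ as well. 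Once that compatibility is in hand, the inductive identity $L_E(n)-L_E(n-1)=L_{E|_H}(n)$ does all the remaining work and the argument of the leading coefficient propagates down to the fibers, where it is manifestly $\pi\phi$.
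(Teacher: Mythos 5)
Your proposal is far more elaborate than what is needed, and it rests on two steps that are not justified and are in fact the hard points. First, you assert that for a $t$-flat $E\in\mathcal{P}_S(\phi)$ the fibers satisfy $E_s\in\mathcal{P}(\phi)$. In the paper this is Proposition \ref{fiberwise semistable}, which is proved \emph{after} Lemma \ref{equal} and depends on it (through Proposition \ref{equivalenc}, and through Lemma \ref{constant central charge} which supplies the argument of $Z(E_s)$), so invoking it here is circular unless you give an independent proof. Right $t$-exactness of $i_s^*$ only yields $E_s\in\mathcal{P}(>\phi-\epsilon)$; the upper bound $E_s\in\mathcal{P}(\leq\phi)$ is not a formal consequence of property (4) and is exactly what the paper deduces \emph{from} the knowledge that $Z(E_s)=Z_S(E)$ has argument $\pi\phi$. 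Second, the same one-sidedness afflicts your key inductive step $E|_H\in\mathcal{P}_H(\phi)$: right $t$-exactness of $(c\times\mathrm{id})^*$ gives $E|_H\in\mathcal{P}_H(>\phi-\epsilon)$ (and even this requires a torsion-freeness reduction carried out in the tilted heart $\mathcal{P}_S((\phi-\epsilon,\phi-\epsilon+1])$ rather than in $\mathcal{A}_S$), but the containment $E|_H\in\mathcal{P}_H(\leq\phi)$ is left unproved, as you yourself acknowledge. Without it the induction only bounds $\psi(E)$ from below.

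The paper's proof avoids all of this. Since $E\in\mathcal{P}_S(>\phi-\epsilon)\cap\mathcal{P}_S(\leq\phi)$, the object $E$ lies in the Abramovich--Polishchuk heart built from each of the two tilted hearts $\mathcal{P}((\phi-\epsilon,\phi-\epsilon+1])$ and $\mathcal{P}((\phi-1,\phi])$, so by the defining property (2) of the global heart the pushforward $p_*(E\otimes q^*\mathcal{O}(n))$ has all of its HN factors of phase in $(\phi-\epsilon,\phi]$ for $n\gg 0$. Hence $Arg(L_E(n))\in((\phi-\epsilon)\pi,\phi\pi]$ for $n\gg 0$, so the leading coefficient of the polynomial $L_E(n)$ has argument in $[(\phi-\epsilon)\pi,\phi\pi]$, i.e.\ $\phi-\epsilon\leq\psi(E)\leq\phi$ for every $\epsilon>0$. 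No restriction to fibers or hyperplanes and no induction on $\dim S$ is needed, and both bounds on the phase come for free. If you wish to keep your strategy you must supply the missing upper-bound containments; the direct argument is both shorter and complete.
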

\begin{proof}
	By definition of $\mathcal{P}_S(\phi)$ in Lemma \ref{global slicing}, we know that for $n\gg 0$, the phases of the HN-factors of $p_*(E\otimes q^*(\mathcal{O}(n)))$ lie in $(\phi-\epsilon,\phi]$. Hence we have $Arg(L_E(n))\in((\phi-\epsilon)\pi,\phi\pi]$ for $n\gg 0$. This implies $\phi\geq\psi(E)\geq \phi-\epsilon$ for arbitrary $\epsilon>0$. Therefore, $\psi(E)=\phi$.
\end{proof}

Then, we have the following proposition.

\begin{prop}\label{equivalenc}
	Suppose $E\in \mathcal{A}_S$, then $E\in\mathcal{P}_S(\phi)$ if and only if $E$ is semi-stable of phase $\phi$ with respect to $\mu_2$.
\end{prop}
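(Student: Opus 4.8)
The plan is to prove the two implications separately, in each case comparing $E\in\mathcal{A}_S$ with the sheaf $p_*(E\otimes q^*(\mathcal{O}(n)))\in\mathcal{A}$ for $n\gg0$ and using Lemma~\ref{global slicing} together with Lemma~\ref{equal} to pass between the slicing $\mathcal{P}_S$ on $D(X\times S)$ and Harder--Narasimhan data for $\sigma$ on $D(X)$. I will use repeatedly the elementary fact that if $W\in\mathcal{A}$ has all of its $\sigma$-HN factors of phase in an interval $[a,b]\subseteq(0,1]$, then $Z(W)$ lies in the convex cone spanned by $e^{i\pi a}$ and $e^{i\pi b}$; in particular $Arg(Z(W))\in[a\pi,b\pi]$. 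This follows by writing $Z(W)$ as the sum of the $Z$ of its HN factors, each of which is a positive real multiple of $e^{i\pi\psi_j}$ with $\psi_j\in[a,b]$. I will also use that $\mu_2=-\cot(\pi\psi)$ is strictly increasing in $\psi\in(0,1]$ (with value $+\infty$ at $\psi=1$), so that $\mu_2(\cdot)\le\mu_2(\cdot)$ and $\psi(\cdot)\le\psi(\cdot)$ are interchangeable, and that $Arg(L_E(n))\to\pi\,\psi(E)$ as $n\to+\infty$.

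For $(\Leftarrow)$, I would assume $E$ is $\mu_2$-semistable of phase $\phi$ and take its Harder--Narasimhan filtration $0=E_0\subset E_1\subset\cdots\subset E_m=E$ with respect to $\mathcal{P}_S$, with $A_i=E_i/E_{i-1}\in\mathcal{P}_S(\phi_i)$ and $\phi_1>\cdots>\phi_m$. Since $E\in\mathcal{A}_S$, all $\phi_i\in(0,1]$, every $E_i$ lies in $\mathcal{A}_S$ and the defining triangles are short exact sequences there; in particular $E_1=A_1$ is a subobject of $E$ and $A_m=E/E_{m-1}$ is a quotient of $E$. By Lemma~\ref{equal}, $\psi(A_1)=\phi_1$ and $\psi(A_m)=\phi_m$, so $\mu_2$-semistability of $E$ forces $\phi_1=\psi(A_1)\le\psi(E)=\phi$ and $\phi_m=\psi(A_m)\ge\psi(E)=\phi$. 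Thus $\phi_1\le\phi\le\phi_m$, while $\phi_1\ge\phi_m$ with equality only if $m=1$; hence $m=1$ and $\phi_1=\phi$, i.e. $E\in\mathcal{P}_S(\phi)$.

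For $(\Rightarrow)$, I would assume $E\in\mathcal{P}_S(\phi)$, so $\psi(E)=\phi$ by Lemma~\ref{equal}, and it remains to check $\mu_2$-semistability. Fix $\epsilon>0$; by the description of $\mathcal{P}_S(\phi)$ in Lemma~\ref{global slicing}, for $n\gg0$ the object $p_*(E\otimes q^*(\mathcal{O}(n)))$ lies in $\mathcal{A}$ and has all its $\sigma$-HN factors of phase in $(\phi-\epsilon,\phi]$. Given a short exact sequence $0\to F\to E\to G\to 0$ in $\mathcal{A}_S$ with $F,G\neq0$, applying $p_*((-)\otimes q^*(\mathcal{O}(n)))$ for $n\gg0$ yields a short exact sequence $0\to p_*(F\otimes q^*\mathcal{O}(n))\to p_*(E\otimes q^*\mathcal{O}(n))\to p_*(G\otimes q^*\mathcal{O}(n))\to 0$ in $\mathcal{A}$, exactly as in the proof of Theorem~\ref{glabal weak stablity condition}. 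Since the maximal $\sigma$-HN phase of a subobject in $\mathcal{A}$ does not exceed that of the ambient object, $p_*(F\otimes q^*\mathcal{O}(n))$ has all its $\sigma$-HN phases $\le\phi$, so $Arg(L_F(n))\le\phi\pi$ by the cone fact, and letting $n\to+\infty$ gives $\psi(F)\le\phi$, i.e. $\mu_2(F)\le\mu_2(E)$. Dually, $p_*(G\otimes q^*\mathcal{O}(n))$ is a quotient, so its $\sigma$-HN phases are $>\phi-\epsilon$, whence $Arg(L_G(n))>(\phi-\epsilon)\pi$ and $\psi(G)\ge\phi-\epsilon$; as $\epsilon>0$ is arbitrary, $\psi(G)\ge\phi$, i.e. $\mu_2(G)\ge\mu_2(E)$. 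Hence $E$ is $\mu_2$-semistable of phase $\phi$.

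I expect the delicate step to be the $(\Rightarrow)$ direction: one must be careful that for $n\gg0$ the functor $p_*((-)\otimes q^*\mathcal{O}(n))$ sends subobjects and quotients in $\mathcal{A}_S$ to subobjects and quotients in $\mathcal{A}$ (the exactness already used in Theorem~\ref{glabal weak stablity condition}), and that the resulting phase bounds for $\sigma$ on $D(X)$ indeed control the leading coefficient of $L_F(n)$, hence $\psi(F)$, in the limit $n\to+\infty$. Once this is in place the $(\Leftarrow)$ direction is a short formal consequence of Lemma~\ref{equal}.
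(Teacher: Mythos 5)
Your proposal is correct. The ($\Leftarrow$) half is the paper's argument essentially verbatim: take the HN filtration of $E$ with respect to the slicing $\mathcal{P}_S$, use Lemma \ref{equal} to identify $\phi_1=\psi(E_1)$ and $\phi_m=\psi(E/E_{m-1})$, and squeeze $\phi_1\le\phi\le\phi_m$ against $\phi_1>\cdots>\phi_m$. In the ($\Rightarrow$) half you take a genuinely different route: the paper argues by contradiction entirely inside $D(X\times S)$ --- a destabilizing quotient $Q$ with $\psi(Q)<\phi$ has, by the see-saw principle, a last $\mathcal{P}_S$-HN factor $Q/Q_{n-1}$ of phase $<\phi$, so the nonzero composite $E\to Q\to Q/Q_{n-1}$ violates the Hom-vanishing axiom of the slicing (dually for subobjects) --- whereas you push a short exact sequence $0\to F\to E\to G\to 0$ down to $\mathcal{A}$ via $p_*((-)\otimes q^*\mathcal{O}(n))$, bound $Arg(L_F(n))$ and $Arg(L_G(n))$ using the monotonicity of the maximal and minimal HN phases under subobjects and quotients in $\mathcal{A}$, and then pass to the limit in $n$ and afterwards in $\epsilon$. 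Both proofs rest on the same input, namely the description of $\mathcal{P}_S(\phi)$ through the HN phases of the pushforwards for $n\gg 0$ (exactly what the paper extracts in the proof of Lemma \ref{equal}), together with the exactness of $p_*((-)\otimes q^*\mathcal{O}(n))$ on short exact sequences in $\mathcal{A}_S$ for $n\gg 0$; the paper's version is shorter because it invokes the slicing axioms from Lemma \ref{global slicing} wholesale, while yours is more explicit about why the polynomial $L_F(n)$, hence $\psi(F)$, is controlled, at the cost of the extra $\epsilon$-limiting step on the quotient side. Your argument is complete as written.
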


\begin{proof}
	If $E$ is semi-stable of phase $\phi$ with respect to $\mu_2$, then take the HN-filtration of $E$ with respect to the slicing. We get $$0=E_0\subset E_1 \subset E_2\subset \cdots \subset E_{n-1} \subset E_n=E,$$ where $E_{i}/E_{i-1}\in\mathcal{P}_S(\phi_i)$ and $$\phi_1>\phi_2>\cdots>\phi_n.$$
	
	Since $E$ is semi-stable with respect to $\mu_2$, we get $\phi_1\leq \phi$ and $\phi_n\geq\phi$. Therefore, $E\in \mathcal{P}_S(\phi)$.
	
	Assume $E\in\mathcal{P}_S(\phi)$ and is not semistable with respect to $\mu_2$. Let $Q$ be a quotient object of $E$ in $\mathcal{A}_S$ with $\psi(Q)< \psi(E)=\phi$ by Lemma \ref{equal}. Take the HN-filtration $$0=Q_0\subset Q_1 \subset Q_2\subset \cdots \subset Q_{n-1} \subset Q_n=Q,$$ of $Q$ with respect to the slicing, where $Q_{i}/Q_{i-1}\in\mathcal{P}_S(\phi_i)$ . Then we have $\phi_n\leq\psi(Q)<\phi$ by see-saw principle. Therefore the  nontrivial morphism $E\rightarrow Q\rightarrow Q/Q_{n-1}$ contradicts the definition of slicing. 
	Similarly, we can draw a contradiction for subobject case.
\end{proof}

\begin{corollary}
	If $E\in\mathcal{A}_S$, then $E$ admits HN filtration with respect to $\mu_i$ for $i=1,2$.
\end{corollary}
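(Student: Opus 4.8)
The strategy is to dispatch the two slope functions separately. For $\mu_1$ there is essentially nothing to do: $\mu_1$ is by construction the slope $\mu_\sigma$ of the pair $\sigma=(\mathcal{A}_S,Z_S)$, and Theorem \ref{glabal weak stablity condition} already asserts that this pair is a weak pre-stability condition, i.e.\ that condition (a) of Definition \ref{slicing } holds. Unwinding that condition is exactly the statement that every $E\in\mathcal{A}_S$ has a unique Harder--Narasimhan filtration into $\mu_1$-semistable objects of strictly decreasing slope; internally, this rested on $\mathcal{A}_S$ being Noetherian (property (5), cf.\ \cite[Theorem 3.3.6]{polishchuk2007constant}) and on $Z_S$ having discrete image, via the criterion in the style of \cite[Proposition 2.4]{bridgeland2007stability}. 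So I would simply invoke Theorem \ref{glabal weak stablity condition} for the $\mu_1$-part.

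The content is the $\mu_2$-part, where the plan is to pull the Harder--Narasimhan filtration coming from the slicing $\mathcal{P}_S$ of Lemma \ref{global slicing} back into the abelian category $\mathcal{A}_S$. The first step is to record that $\mathcal{A}_S=\mathcal{P}_S((0,1])$: the t-structures $(\mathcal{P}_S(>\phi),\mathcal{P}_S(\le\phi))$ are the Abramovich--Polishchuk images of $(\mathcal{P}(>\phi),\mathcal{P}(\le\phi))$, the case $\phi=1$ recovers the t-structure whose heart is $\mathcal{A}_S$, and general slicing formalism identifies this heart with $\mathcal{P}_S(>0)\cap\mathcal{P}_S(\le 1)=\mathcal{P}_S((0,1])$. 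Given $0\ne E\in\mathcal{A}_S$, axiom (c) of the slicing produces $0=E_0\to E_1\to\cdots\to E_m=E$ with $\mathrm{cone}(E_{j-1}\to E_j)=A_j\in\mathcal{P}_S(\phi_j)$ and $\phi_1>\cdots>\phi_m$; since $E\in\mathcal{P}_S((0,1])$ all the $\phi_j$ lie in $(0,1]$, hence each $A_j$ and each $E_j$ lies in $\mathcal{A}_S$ and each triangle becomes a short exact sequence $0\to E_{j-1}\to E_j\to A_j\to 0$ in $\mathcal{A}_S$. By Proposition \ref{equivalenc} together with Lemma \ref{equal}, each $A_j$ is $\mu_2$-semistable with $\psi(A_j)=\phi_j$; and since $\mu_2=-\cot(\pi\psi)$ is a strictly increasing function of $\psi$ on $(0,1]$ (with value $+\infty$ at $\psi=1$), the chain $\phi_1>\cdots>\phi_m$ becomes $\mu_2(A_1)>\cdots>\mu_2(A_m)$. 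This is the desired HN filtration for $\mu_2$, and its uniqueness follows from the uniqueness of the slicing HN filtration, because Proposition \ref{equivalenc} turns any $\mu_2$-HN filtration of $E$ back into a filtration realizing the slicing one.

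The step I expect to require the most care is the identification $\mathcal{A}_S=\mathcal{P}_S((0,1])$, equivalently the assertion that the slicing filtration of an object of $\mathcal{A}_S$ has all its factors in $\mathcal{A}_S$ with phases in $(0,1]$: this is the one point that genuinely uses the compatibility of the Abramovich--Polishchuk construction with the whole family $\{(\mathcal{P}(>\phi),\mathcal{P}(\le\phi))\}_\phi$ of t-structures, rather than being pure slicing bookkeeping. Once that is granted, the passage to $\mu_2$-semistability via Proposition \ref{equivalenc}, the monotonicity of $\mu_2$ in $\psi$, and the verification of uniqueness are all routine.
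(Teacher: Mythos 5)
Your proposal is correct and follows the same route as the paper: for $\mu_1$ it invokes Noetherianity of $\mathcal{A}_S$ together with discreteness of $Z_S$ (as established in Theorem \ref{glabal weak stablity condition}), and for $\mu_2$ it transports the Harder--Narasimhan filtration of the slicing $\mathcal{P}_S$ into $\mathcal{A}_S$ via Proposition \ref{equivalenc}. The paper's own proof is just a two-line version of exactly this argument; your write-up only adds the (correct) bookkeeping of the identification $\mathcal{A}_S=\mathcal{P}_S((0,1])$ and the monotonicity of $\mu_2$ in the phase.
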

\begin{proof}
	For $\mu_1$, this follows from the fact that $\mathcal{A}_S$ is Noetherian and the image of $Z_S$ is discrete.
	
	For $\mu_2$, it follows from Proposition \ref{equivalenc}.
\end{proof}

\begin{lemma}\label{constant central charge}
	If $S$ is a smooth projective variety of finite type and $E\in\mathcal{A}_S$ is t-flat over $S$, then $Z_S(E)=Z(E_s)$ for any point $s\in S$.
\end{lemma}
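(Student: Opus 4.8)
The plan is to reduce the statement to the two tools we have developed about $Z_S$: its behaviour under restriction to smooth divisors (the relation $Z_S(E)=Z_D(E|_D)$ established in the proof of Theorem \ref{glabal weak stablity condition} and Corollary \ref{Independence corollary}) and its value on objects pulled back from $D(X)$ (Corollary \ref{Independence corollary}(a)). Since $E$ is t-flat, $i_s^*E = E_s \in \mathcal{A}$ and is nonzero for \emph{every} closed point $s$, so in particular the locus $\{s \in S \mid i_s^*E \in \mathcal{A},\ i_s^*E \neq 0\}$ is all of $S$; this is exactly the hypothesis under which Corollary \ref{Independence corollary}(b) applies, but we need more than nonvanishing of $Z_S(E)$ — we need to identify it with $Z(E_s)$.

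First I would prove the case $\dim S = 0$, where it is trivial since $S = \{s\}$ and $\mathcal{A}_S = \mathcal{A}$, $Z_S = Z$. Then I would induct on $r = \dim S$. Choosing $\mathcal{O}(1)$ globally generated (legitimate, since $Z_S$ is independent of the polarization by Corollary \ref{Independence corollary}(a)), pick a general smooth divisor $D \in |\mathcal{O}(1)|$ passing through the chosen point $s$ — or rather, since $s$ is fixed but arbitrary, it suffices to prove the equality for a single convenient $s$ once we know $Z(E_s)$ is independent of $s$. Here is the key observation: for a t-flat object the class $[E_s] \in K(\mathcal{A})$, hence $Z(E_s)$, is constant in flat families over a connected base — this is a standard local-constancy argument, but in fact one can see it directly from the divisor relation. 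So I would fix a general smooth $D \in |\mathcal{O}(1)|$; then $E$ is automatically $D$-torsion free (being torsion free, as t-flatness forces $E$ to have no $S$-torsion subobjects: an $S$-torsion subobject supported on $T \subsetneq S$ would have zero fibre at general points, contradicting $E_s \neq 0$ — wait, more carefully, a torsion subobject $F \hookrightarrow E$ supported on $T$ gives $F_s \hookrightarrow E_s$ which is $0$ for $s \notin T$, which is fine, so one argues instead that the quotient $\bar E$ is still t-flat with the same fibres off $T$, and applies induction to $\bar E$; but since $Z_D$ only sees the torsion-free part in the same way, I would handle this exactly as in the proof of Corollary \ref{Independence corollary}(b)). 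With $E$ $D$-torsion free we get $E|_D := (c \times \mathrm{id})^*E \in \mathcal{A}_D$ and the relation $Z_S(E) = Z_D(E|_D)$. Moreover $E|_D$ is t-flat over $D$ and $(E|_D)_s = E_s$ for $s \in D$. By the inductive hypothesis, $Z_D(E|_D) = Z((E|_D)_s) = Z(E_s)$ for $s \in D$. Combining, $Z_S(E) = Z(E_s)$ for every $s \in D$; and since a general $D$ through any prescribed point exists, every point of $S$ lies on some such $D$, so $Z_S(E) = Z(E_s)$ for all $s \in S$.

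The main obstacle I anticipate is the bookkeeping around torsion: strictly speaking a t-flat object need not be torsion free (e.g.\ the cone structure could hide an $S$-torsion piece that happens to vanish on fibres), so one cannot simply assume $E$ torsion free. The clean way around this is to observe that if $F \subset E$ is the maximal $S$-torsion subobject, then $F$ is supported on a proper closed $T \subsetneq S$ (since $E_s \neq 0$ and t-flat forces $\dim$ of the support of any torsion piece to be $< r$, else it would be $S$-torsion over all of $S$ contradicting t-flatness at a general point where the fibre would then be a nonzero object of $\mathcal{A}_T$ rather than $\mathcal{A}$), so $F|_D$ for general $D$ is again $D$-torsion, its contribution to $L_E(n)$ has degree $< r$, hence $Z_S(F) = 0$, and $\bar E = E/F$ is again t-flat with $\bar E_s = E_s$ for $s$ outside the lower-dimensional $T$. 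Thus $Z_S(E) = Z_S(\bar E)$ and we may run the divisor induction on $\bar E$. A second minor point to verify is that $E|_D$ is genuinely t-flat over $D$: for $s \in D$ general, $(E|_D)_s = (E_s)$ as objects of $D(X)$ via the compatibility $i_s^* (c\times\mathrm{id})^* = i_s^*$, and $E_s \in \mathcal{A}$ by hypothesis, so this is automatic on a dense open of $D$; to get it on all of $D$ one uses that $\mathcal{A}_D$-membership of the fibres is the defining t-flatness and is detected on the dense locus together with $E|_D \in \mathcal{A}_D$ — but since we only need the \emph{value} $Z_D(E|_D)$, which by induction is determined by the generic fibre anyway (the inductive hypothesis only needs t-flatness, and the generic fibre computation suffices once we know $Z_D(E|_D) = Z((E|_D)_\eta)$ is constant), this subtlety does not actually bite. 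I would therefore phrase the induction so that at each stage only the generic fibre over the divisor is used, which makes the argument self-contained.
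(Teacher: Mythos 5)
Your proof is correct and is essentially the paper's own argument: induction on $\dim S$, with the inductive step passing to a smooth divisor $D\in|\mathcal{O}(1)|$ through the prescribed point $s$ and using $Z_S(E)=Z_D(E|_D)$ together with the inductive hypothesis applied to the t-flat restriction $E|_D$. The only caution is that your closing suggestion to run the induction ``using only the generic fibre'' would not by itself yield the claim for \emph{every} $s$ (constancy of $Z(E_s)$ is the content of the lemma, not an input), so you should keep the version where $D$ is chosen through the given point, as in your main argument.
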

\begin{proof}
	We can prove it by induction on the dimension of $S$. If the dimension of $S$ is 0, then the statement is trivial. Now for the inductive step, we use the same argument in the proof of part (b) in Corollary \ref{Independence corollary}. Indeed, for any point $s\in S$, there exists a smooth divisor $D$ such that $s\in D$, and $Z_S(E)=Z_D(E|_D)$. Hence, $Z(E_s)=Z_D(E|_D)=Z_S(E)$ by induction.
\end{proof}

\begin{prop}\label{fiberwise semistable}
	Let $S$ be a smooth projective variety of finite type and $E\in \mathcal{A}_S$ be $t$-flat. Consider the following propositions.

	(1) $E$ is semi-stable of phase $\phi$ with respect to $\mu_2$.

	(2) $E_s\in\mathcal{P}(\phi)$, for arbitrary $s\in S$.
	
	Then (1) implies (2).
\end{prop}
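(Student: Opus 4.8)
The plan is to translate hypothesis (1) into a statement about the slicing $\mathcal{P}_S$ of Lemma~\ref{global slicing} and then to control the $\sigma$-Harder--Narasimhan phases of each fibre $E_s$ from below and from above by two separate arguments. By Proposition~\ref{equivalenc}, (1) says precisely that $E\in\mathcal{P}_S(\phi)$. We may assume $E\neq0$, since otherwise $E_s=0\in\mathcal{P}(\phi)$ for every $s$. As $E$ is t-flat, $E_s=i_s^*E\in\mathcal{A}$ for every closed point $s\in S$, so all $\sigma$-HN factors of $E_s$ already have phase in $(0,1]$; the task is to upgrade this to: every $\sigma$-HN factor of $E_s$ has phase exactly $\phi$ (in particular $\phi\in(0,1]$).

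For the lower bound I would use t-right exactness. For each $\psi\in\mathbb{R}$ the pair $(\mathcal{P}_S(>\psi),\mathcal{P}_S(\le\psi))$ is the Abramovich--Polishchuk t-structure induced by $(\mathcal{P}(>\psi),\mathcal{P}(\le\psi))$; since $\mathcal{P}(>\psi)$ and $\mathcal{P}_S(>\psi)$ are, after a shift, the non-positive parts of the corresponding bounded t-structures, property (4) of the Abramovich--Polishchuk construction --- t-right exactness of $i_T^*$ applied to the closed immersion $\{s\}\hookrightarrow S$ --- says exactly that $i_s^*\bigl(\mathcal{P}_S(>\psi)\bigr)\subseteq\mathcal{P}(>\psi)$. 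Now $E\in\mathcal{P}_S(\phi)\subseteq\mathcal{P}_S(>\phi-\epsilon)$ for every $\epsilon>0$, hence $E_s\in\bigcap_{\epsilon>0}\mathcal{P}(>\phi-\epsilon)$, i.e. every $\sigma$-HN factor of $E_s$ has phase $\ge\phi$. Together with the previous paragraph, all $\sigma$-HN factors of $E_s$ have phases in $[\phi,1]$.

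For the upper bound I would use rigidity of the central charge together with positivity. First, $E\neq0$ forces $E_{s_0}\neq0$ for at least one $s_0\in S$ (consider a non-vanishing cohomology sheaf of $E$ and the right exactness of $i_{s_0}^*$); as $0\neq E_{s_0}\in\mathcal{A}$ and $Z$ is a stability function, $Z(E_{s_0})\neq0$, so Lemma~\ref{constant central charge} gives $Z_S(E)=Z(E_{s_0})\neq0$ and hence $Z(E_s)=Z_S(E)\neq0$ (so, a posteriori, $E_s\neq0$) for \emph{every} $s\in S$. Since $Z_S(E)$ is a positive real multiple of the leading coefficient of $L_E(n)$, which is therefore its first non-zero coefficient, Lemma~\ref{equal} yields $Arg\bigl(Z_S(E)\bigr)=\pi\,\psi(E)=\pi\phi$, so $e^{-i\pi\phi}Z(E_s)\in\mathbb{R}_{>0}$. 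Writing the $\sigma$-HN factors of $E_s$ as $A_1,\dots,A_m$ with $A_j\in\mathcal{P}(\phi_j)$, $\phi_j\in[\phi,1]$, and $Z(A_j)=m_j\,e^{i\pi\phi_j}$ with $m_j>0$ (here I use that $\sigma$ is a genuine stability condition, not merely a weak one, so $m_j>0$), additivity of $Z$ gives
\[
0=Im\bigl(e^{-i\pi\phi}Z(E_s)\bigr)=\sum_{j=1}^{m}m_j\sin\bigl(\pi(\phi_j-\phi)\bigr).
\]
Every summand is $\ge0$ since $\phi_j-\phi\in[0,1)$, so $\sin(\pi(\phi_j-\phi))=0$ and thus $\phi_j=\phi$ for all $j$; hence the $\sigma$-HN filtration of $E_s$ is trivial of phase $\phi$, i.e. $E_s\in\mathcal{P}(\phi)$, which is (2).

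I expect the routine part to be the bookkeeping with the two Abramovich--Polishchuk t-structures, and the main obstacle to be the upper bound: the positivity identity above only pins down all the phases once one knows $Arg(Z_S(E))=\pi\phi$, which requires $Z_S(E)\neq0$. This is precisely where t-flatness is essential --- it makes the fibres $E_s$ honest non-zero objects of $\mathcal{A}$, hence of non-zero central charge --- whereas for a general object of $\mathcal{A}_S$ one can well have $Z_S=0$.
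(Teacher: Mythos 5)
Your proposal is correct and follows essentially the same route as the paper's proof: translate (1) into $E\in\mathcal{P}_S(\phi)$ via Proposition~\ref{equivalenc}, use t-right exactness of $i_s^*$ to bound the fibrewise HN phases below by $\phi$, use t-flatness together with Lemma~\ref{constant central charge} to get $Z(E_s)=Z_S(E)\neq0$ of argument $\pi\phi$, and combine. The only difference is that you spell out the final ``combining'' step (the positivity identity forcing all HN phases to equal $\phi$) explicitly, where the paper leaves it implicit.
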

\begin{proof}
	If $E=0$, the statement is obvious. 
	
	Now assume $E\in\mathcal{A}_S$ is a nonzero object and t-flat, then we can deduce that $Z_S(E)\neq 0$. Otherwise we have $Z(E_s)=0$, which implies $E_s$ is the zero object in $\mathcal{A}$ for all $s\in S$. Hence $E=0$, which contradicts our assumption.  Now we have $Z(E_s)\neq 0$ and the phase of $E_s$ is also $\phi$ by Lemma \ref{constant central charge}. 
	
	On the other hand, the assumption that $E$ is semi-stable of phase $\phi$ with respect to $\mu_2$ is equivalent to $$E\in\mathcal{P}_S(\phi)=\mathcal{P}_S(\leq \phi)\cap \underset{\epsilon>0}{\cap}\mathcal{P}_S(>\phi-\epsilon)$$ by Lemma \ref{global slicing} and Proposition \ref{equivalenc}. Since $i_s^*$ is t-right exact, it sends objects in $\mathcal{P}_S(>\phi-\epsilon)$ to objects in $\mathcal{P}(>\phi-\epsilon)$, hence $E_s\in\mathcal{P}(>\phi-\epsilon)$. The assumption that $E$ is t-flat implies $E_s\in\mathcal{A}$. Hence we have $E_s\in\mathcal{P}(>\phi-\epsilon)\cap \mathcal{P}(\leq 1)$ for all $\epsilon>0$. 
	
	Combing these two facts, we get $E_s\in \mathcal{P}(\phi)$.

\end{proof}

\begin{prop}\label{semistable reduction}
	If  $S$ is a smooth projective variety of finite type, and $E\in \mathcal{A}_S$ is semi-stable with respect to $\mu_1$ of phase $\phi$ and $Z_S(E)\neq 0$, then there exists a short exact sequence 
	
	$$0\rightarrow K\rightarrow E\rightarrow Q\rightarrow 0$$
	such that $K\in \mathcal{P}_S(\phi)$, $Q\in \mathcal{P}_S(<\phi)$ and $Z_S(Q)=0$, where $Q$ could be zero.
\end{prop}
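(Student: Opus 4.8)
The plan is to extract $K$ and $Q$ from the Harder--Narasimhan filtration of $E$ with respect to $\mu_2$, using $\mu_1$-semistability to force this filtration into a rigid shape: a single factor of phase exactly $\phi$ at the bottom, and every later step annihilated by $Z_S$. First I would record the normalizations. Since $E$ is $\mu_1$-semistable of phase $\phi$ with $Z_S(E)\neq 0$ and $Z_S$ is a weak stability function on $\mathcal{A}_S$, we have $Z_S(E)=m\,e^{i\pi\phi}$ with $m>0$ and $\phi\in(0,1]$. Then I would take the Harder--Narasimhan filtration $0=E_0\subset E_1\subset\cdots\subset E_{m'}=E$ of $E$ with respect to $\mu_2$ (it exists by the corollary to Proposition~\ref{equivalenc}), with $\mu_2$-semistable factors $F_i:=E_i/E_{i-1}$ of strictly decreasing phases $\phi_i\in(0,1]$, which by Proposition~\ref{equivalenc} satisfy $F_i\in\mathcal{P}_S(\phi_i)$. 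The one preliminary computation I would make is that, whenever $Z_S(F_i)\neq0$, the leading coefficient of the polynomial $L_{F_i}(n)$ is a positive real multiple of $Z_S(F_i)$ by the definition of $Z_S$, while by Lemma~\ref{equal} this leading coefficient has argument $\pi\phi_i$; hence $Z_S(F_i)=m_i\,e^{i\pi\phi_i}$ with $m_i>0$ for every $i$ in $I:=\{\,i\mid Z_S(F_i)\neq 0\,\}$, and $Z_S(E)=\sum_{i\in I}m_i\,e^{i\pi\phi_i}$.

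The first thing to prove is that $\phi_1=\phi$ (hence, as the $\phi_i$ strictly decrease, $\phi_i<\phi$ for $i\ge 2$). If $m'=1$ this holds because $F_1=E$ and $\phi_1=\psi(E)=\phi$. If $m'\ge 2$ then $E_1=F_1$ is a nonzero proper subobject of $E$, and I would argue $\phi_1\le\phi$ by contradiction: if $\phi_1>\phi$, then $\phi<1$ (as $\phi_1\le1$), and either $Z_S(E_1)\neq0$, so $Z_S(E_1)=m_1e^{i\pi\phi_1}$ with $m_1>0$ and $\phi_1>\phi$ gives $\mu_1(E_1)>\mu_1(E)$, or $Z_S(E_1)=0$, so $\mu_1(E_1)=+\infty>\mu_1(E)$; both contradict $\mu_1$-semistability. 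Conversely $\phi_1\ge\phi$: if $\phi_1<\phi$, then all $\phi_i<\phi\le1$, so either $I=\emptyset$ and $Z_S(E)=0$, or $Z_S(E)=\sum_{i\in I}m_ie^{i\pi\phi_i}$ is a nonzero positive combination of distinct upper-semicircle unit vectors and hence has argument at most $\pi\phi_1<\pi\phi$; both contradict $Z_S(E)=me^{i\pi\phi}\neq0$.

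The second thing to prove is that $Z_S(F_i)=0$ for all $i\ge2$, i.e.\ $I=\{1\}$. If some $i_0\ge2$ lies in $I$, I would look at the quotient $G:=E/E_{i_0-1}$, whose Harder--Narasimhan factors are $F_{i_0},\dots,F_{m'}$ of phases $\phi_{i_0}>\cdots>\phi_{m'}$, all strictly less than $\phi\le1$. Then $Z_S(G)=\sum_{i\ge i_0,\,i\in I}m_ie^{i\pi\phi_i}$ is a nonempty positive combination of unit vectors $e^{i\pi\phi_i}$ with $\phi_i\in(0,\phi)$, so it is nonzero and $0<\arg Z_S(G)\le\pi\phi_{i_0}<\pi\phi$; therefore $\mu_1(G)<\mu_1(E)$, contradicting $\mu_1$-semistability of $E$ in its quotient formulation, which is equivalent to the subobject one by the see-saw principle.

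Granting these, I would conclude by taking $K:=E_1$ and $Q:=E/E_1$: then $K\in\mathcal{P}_S(\phi_1)=\mathcal{P}_S(\phi)$; the Harder--Narasimhan factors of $Q$ are $F_2,\dots,F_{m'}$ of phases $<\phi$, so $Q\in\mathcal{P}_S(<\phi)$, with $Q=0$ when $m'=1$; and $Z_S(Q)=\sum_{i\ge2}Z_S(F_i)=0$ by the second claim. Hence $0\to K\to E\to Q\to 0$ is the desired short exact sequence. I expect the only real content to be the two claims, and within them the elementary positivity fact that a nonempty $\mathbb{R}_{>0}$-combination of unit vectors $e^{i\pi\psi_j}$ with every $\psi_j\in(0,t)$ for some $t\le1$ is nonzero with argument $\le\pi\max_j\psi_j<\pi t$ (checked by rotating through $e^{-i\pi\max_j\psi_j}$ and inspecting the sign of the imaginary part); the minor nuisances will be carrying the boundary case $\phi=1$ (where $\mu_1(E)=+\infty$) through both claims, and keeping in mind that the $\mu_2$-Harder--Narasimhan filtration genuinely lives in $\mathcal{A}_S$, so that $E_1$ and $E/E_1$ are honest sub- and quotient objects there.
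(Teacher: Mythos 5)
Your proposal is correct and follows essentially the same route as the paper: take the Harder--Narasimhan filtration of $E$ with respect to $\mu_2$, use $\mu_1$-semistability and the see-saw principle to show all factors beyond the first are killed by $Z_S$, and set $K=E_1$, $Q=E/E_1$. The paper's proof is just a terser version of yours (it phrases the contradiction as ``$E_1$ would destabilize $E$'' rather than via the quotient $G$, and leaves the verification $\phi_1=\phi$ implicit), so the extra care you take with the boundary cases and with $\arg Z_S$ of positive combinations is welcome but not a different argument.
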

\begin{proof}
The sequence comes from the HN filtration of $E$ with respect to $\mu_2$, or equivalently, the global slicing $\mathcal{P}_S$. Indeed, suppose $$0=E_0\subset E_1 \subset E_2\subset \cdots \subset E_{n-1} \subset E_n=E$$ is the filtration. 

We claim that  $Z_S(E_i/E_{i-1})=0$ for all $i>1$. Otherwise there exists $i_0>1$ such that $Z_S(E_{i_0}/E_{i_0-1})\neq 0$, then $\mu_1(E_{i_0}/E_{i_0-1})=\mu_2(E_{i_0}/E_{i_0-1})$ and $E_1$ would destabilize $E$ with respect to $\mu_1$ by see-saw principle of $Z_S$. Therefore, $Z_S(E_1)=Z_S(E)\neq 0$.

Then $0\rightarrow E_1\rightarrow E\rightarrow E/E_1\rightarrow 0$ is the sequence we need.
\end{proof}

\begin{example}
	Take $X=S=\bold{P}^1$ and $\sigma=(CohX, Z)$, where $Z(E)=-\deg(E)+i\cdot rk(E)$. Then the ideal sheaf $\mathcal{I}$ of a closed point $(x_0, s_0)$ in $\bold{P}^1\times\bold{P}^1$ is an example of an object that is semi-stable with respect to $\mu_1$ but not semi-stable with respect to $\mu_2$. This is because $\mathcal{I}_s$ is semi-stable for every $s\in S$ except $s_0\in S$. Indeed, we have $$\mathcal{I}_s=\begin{cases}
	\mathcal{O}_{\bold{P}^1\times s} & \text{if} \ s\neq s_0, \\ \mathcal{O}_{\bold{P}^1\times s_0}(-1)\oplus k(x_0,s_0)& \text{if} \ s=s_0,
	\end{cases}$$ where $k(x_0,s_0)$ is the skyscraper sheaf of the point $(x_0,s_0)$.
	
	In this case, the sequence in Proposition \ref{semistable reduction} is
	
	$$0\rightarrow \mathcal{O}(0,-1)\rightarrow \mathcal{I}\rightarrow \mathcal{O}_{\bold{P}^1\times s_0}(-1)\rightarrow 0.$$
	
	One can check that $\mathcal{O}(0,-1)$ is of phase $\frac{1}{2}$ with $L_{\mathcal{O}(0,-1)}(n)=i\cdot n$, $\mathcal{O}_{\bold{P}^1\times s_0}(-1)$ is torsion and of phase $\frac{1}{4}$ with $L_{\mathcal{O}_{\bold{P}^1\times s_0}(-1)}(n)=1+i$.
\end{example}

\section{Existence of stability conditions}\label{Section 4}

In the rest of this paper, we will focus on the case when $S$ is a smooth projective curve. The polynomial $L_E(n)$ becomes a linear polynomial, which can be written in the form $$L_E(n):=a(E)n+b(E)+i(c(E)n+d(E)),$$ where $a,b,c,d$ are group homomorphisms from $K(\mathcal{A_S})$ to $\mathbb{R}$. 

By Theorem \ref{glabal weak stablity condition}, we know that $a+ic$ is a weak stability function on $\mathcal{A}_S$, and $L_E(n)$ will lie in  $\mathbb{H} \cup \mathbb{R}_{< 0}$ for nonzero object $E\in\mathcal{A}_S$ and $n\gg0$. By this observation, we have the following lemma.

\begin{lemma}\label{easy lemma} For a nonzero object $E\in\mathcal{A}_S$, we have the following inequalities.
	
	(i) $c(E)\geq 0$.
	
	(ii) If $c(E)=0$, then $d(E)\geq 0$ and $a(E)\leq 0$.
	
	(iii) If $c(E)=a(E)=d(E)=0$, then $b(E)< 0$.
\end{lemma}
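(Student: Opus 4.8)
The three statements are really a single assertion: for a nonzero $E\in\mathcal{A}_S$, the linear polynomial $L_E(n)=\bigl(a(E)n+b(E)\bigr)+i\bigl(c(E)n+d(E)\bigr)$ lies in $\mathbb{H}\cup\mathbb{R}_{<0}$ for all $n\gg0$, and I want to unpack what that forces on the coefficients. The plan is to argue purely from this asymptotic positivity, case by case on which coefficients vanish. First I would record the two basic facts from Theorem \ref{glabal weak stablity condition}: (1) the leading coefficient of $L_E(n)$, read off in the order ``degree $1$ term, then degree $0$ term'', lies in $\mathbb{H}\cup\mathbb{R}_{\leq 0}$; equivalently, $c(E)\geq 0$, and if $c(E)=0$ then $a(E)\leq 0$; (2) more precisely, since $L_E(n)\in\mathbb{H}\cup\mathbb{R}_{<0}$ for $n\gg0$ and $E\neq0$, the value $L_E(n)$ is never zero for $n$ large, and its imaginary part $c(E)n+d(E)$ is $\geq0$ for $n\gg0$ while the real part is $\leq0$ whenever the imaginary part is $0$ (and $<0$ when all higher coefficients vanish, i.e. when $L_E$ is the nonzero constant $b(E)$).

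For (i): $c(E)\geq0$ is immediate, since otherwise $c(E)n+d(E)\to-\infty$, contradicting $\mathrm{Im}\,L_E(n)\geq0$ for $n\gg0$. For (ii): assume $c(E)=0$, so $\mathrm{Im}\,L_E(n)=d(E)$ is constant. If $d(E)<0$ we violate $\mathrm{Im}\,L_E(n)\geq0$, so $d(E)\geq0$; and $a(E)\leq0$ is exactly the statement that the leading coefficient lies in $\mathbb{H}\cup\mathbb{R}_{\leq0}$ once $c(E)=0$ forces it off $\mathbb{H}$ (one reads the degree-$1$ coefficient $a(E)+ic(E)=a(E)$, which must therefore be in $\mathbb{R}_{\leq0}$); alternatively, if $a(E)>0$ then for $n\gg0$ we would have $L_E(n)=a(E)n+b(E)>0$ with zero imaginary part, contradicting the stability-function condition. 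For (iii): assume $c(E)=a(E)=d(E)=0$, so $L_E(n)=b(E)$ is a nonzero real constant; applying the stability-function inequality (with strictness, since $E\neq0$ and $\mathrm{Im}=0$) gives $b(E)<0$. Here the only subtlety is that $b(E)\neq0$: this is because $L_E(n)\equiv b(E)$ would otherwise be identically zero, and $Z(p_*(E\otimes q^*\mathcal{O}(n)))=0$ for all $n\gg0$ would force $p_*(E\otimes q^*\mathcal{O}(n))=0$ in $\mathcal{A}$ for all $n\gg0$ (since $Z$ is a genuine stability function on $\mathcal{A}$, so it vanishes only on the zero object), whence $E=0$ by the characterization of $\mathcal{A}_S$ in Theorem \ref{glabal weak stablity condition}(2), contradicting $E\neq0$.

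The only step requiring a little care — and the one I would treat as the ``main obstacle'', though it is mild — is justifying in (iii) that $b(E)<0$ strictly rather than just $\leq0$; this is where one genuinely uses that $Z$ is a \emph{stability} function on $\mathcal{A}$ (not merely a weak one) together with the fact that $\mathcal{A}$ has no nonzero objects killed by $Z$, plus the projective description of $\mathcal{A}_S$ to conclude $E=0$ from $p_*(E\otimes q^*\mathcal{O}(n))=0$ for $n\gg0$. Everything else is a direct reading-off of signs from the asymptotic location of $L_E(n)$ established in Theorem \ref{glabal weak stablity condition}, so I would present the whole lemma in a few lines, organized as the three cases above.
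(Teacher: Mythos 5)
Your proposal is correct and takes essentially the same route as the paper's (very terse) proof: everything is read off from the observation that $L_E(n)\in\mathbb{H}\cup\mathbb{R}_{<0}$ for $n\gg0$ when $E\neq 0$, together with the fact from Theorem \ref{glabal weak stablity condition} that the leading coefficient of $L_E$ lies in $\mathbb{H}\cup\mathbb{R}_{\leq 0}$, and your extra care in (iii) about why $b(E)\neq 0$ (via $Z$ being a genuine stability function and the generation of $D(X\times S)$ by boxes of line bundles) is a legitimate filling-in of what the paper leaves implicit. One small caveat: your ``alternative'' justification of $a(E)\leq 0$ in (ii) only works when $d(E)=0$ (if $d(E)>0$ the value $L_E(n)$ stays in $\mathbb{H}$ no matter the sign of $a(E)$), but your primary argument via the leading coefficient covers all cases, so the proof stands.
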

\begin{proof}
	This follows easily from the observation and the definition of weak stability function.
\end{proof}

Now we can restate the Positivity Lemma from \cite[Lemma 3.3]{bayer2014projectivity} in terms of $a,b,c,d$.

\begin{lemma}[Restatement of Positivity Lemma]\label{restatement}
If $E\in\mathcal{A}_S$ is t-flat and $E_s$ is semi-stable for any point $s\in S$, then $b(E)c(E)-a(E)d(E)\geq 0$. 
\end{lemma}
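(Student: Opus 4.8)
The plan is to reduce Lemma~\ref{restatement} to the Positivity Lemma of Bayer--Macr\`i \cite[Lemma 3.3]{bayer2014projectivity} applied to a suitable flat family over the curve $S$. Recall that the Positivity Lemma says: if $\mathcal{E}$ is an object of $D(X\times S)$ such that $\mathcal{E}_s$ is $\sigma$-semistable of the same phase for all $s\in S$, then the associated divisor class (built out of the central charge $Z$ via the real and imaginary parts of $Z(\mathcal{E}_{\bullet})$) is nef, and its degree against a curve is nonnegative. So the first step is to rewrite the degree of that divisor class on the curve $S$ in terms of the coefficients $a,b,c,d$. Concretely, the divisor class in question is $\ell_{\sigma} = \Im\!\big(-Z(\mathcal{E})/Z(\mathcal{E}_s)\big)$-type expression; pairing it with $[S]$ produces, after expanding the imaginary part, precisely $b(E)c(E) - a(E)d(E)$ up to a positive scalar. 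I would verify this by noting that $Z(p_*(E\otimes q^*\mathcal{O}(n))) = L_E(n) = (a(E)n+b(E)) + i(c(E)n+d(E))$, that $Z(E_s) = a(E)+ic(E)$ by Lemma~\ref{constant central charge} (using $t$-flatness and Corollary~\ref{Independence corollary}(a) to identify $Z_S(E) = a(E) + ic(E)$ with the fiberwise charge, since the degree-$n$ coefficient of $L_E(n)$ is $Z_S(E)$), and that the ``variation'' of the phase of $L_E(n)$ as $n$ grows is governed by the cross-term $b c - a d$.

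The key steps, in order: (1) observe that $t$-flatness of $E$ together with the hypothesis ``$E_s$ semistable for all $s$'' gives a genuine flat family of semistable objects over $S$ in the sense required by the Positivity Lemma — here one should check that all the $E_s$ have the \emph{same} phase, which follows because $Z(E_s) = Z_S(E) = a(E)+ic(E)$ is independent of $s$ by Lemma~\ref{constant central charge}, so the slope $\mu_\sigma(E_s)$ is constant; (2) apply the Positivity Lemma to this family over the projective curve $S$ to get that a certain divisor class on $S$ has nonnegative degree; (3) compute that degree explicitly. For step (3) the cleanest route is: the Positivity Lemma's divisor is $D_\sigma$ with $D_\sigma \cdot S = \Im\!\big(\overline{Z_S(E)}\cdot w\big)$ where $w$ is the ``first-order'' part of $L_E(n)$ in the relevant normalization; writing $Z_S(E) = a+ic$ and $w = b+id$ (the constant term of the linear polynomial, which records how $Z$ varies in the family to first order), one gets $\Im\!\big((a-ic)(b+id)\big) = ad - bc$ or its negative, and matching signs with the statement of the Positivity Lemma yields $b(E)c(E) - a(E)d(E)\ge 0$.

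The main obstacle I anticipate is step (3): correctly identifying the divisor class appearing in \cite[Lemma 3.3]{bayer2014projectivity} with the combination $b c - a d$, including getting the sign right and the normalization right. This requires unwinding the precise formulation of the Positivity Lemma — it is stated for a family $\mathcal{E}\in D(X\times S)$ with a fixed central charge on $D(X)$, and produces a nef class $\ell_\sigma(\mathcal{E})$ on $S$ whose definition involves $Z$ evaluated on the family; one must match ``$Z$ evaluated on the family, integrated along the curve direction'' with the linear-polynomial coefficients $a,b,c,d$ coming from Abramovich--Polishchuk's construction. A subtlety is that the Positivity Lemma as stated may need the family to consist of semistable objects of phase strictly less than $1$ (or some genericity), and one may need to twist by a power of $q^*\mathcal{O}(1)$ or apply the $S^1$-action to put oneself in the hypotheses; by Corollary~\ref{Independence corollary}(a) such a twist does not change $Z_S$, and it changes $a,b,c,d$ in a controlled way that leaves $bc-ad$ invariant (or changes it by a manifestly nonnegative amount), so the inequality is unaffected. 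Once the dictionary between the two normalizations is pinned down, the rest is a one-line computation.
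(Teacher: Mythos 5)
Your proposal is essentially the paper's own argument: the paper likewise uses constancy of the fiberwise charge $Z(E_s)=Z_S(E)=a(E)+ic(E)$ (Lemma~\ref{constant central charge}), rotates by the $S^1$-action so the fibers have phase $1$, invokes Abramovich--Polishchuk to place $E$ in the rotated global heart so that $Im(e^{i\theta}L_E(n))\geq 0$ for $n\gg 0$, and reads off $Im\bigl(e^{i\theta}(b(E)+id(E))\bigr)=\bigl(b(E)c(E)-a(E)d(E)\bigr)/\sqrt{a(E)^2+c(E)^2}\geq 0$, which is exactly the dictionary you describe in step (3) with $C=S$ and $\Phi_E(\mathcal{O}_S)=p_*E$. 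The only cosmetic difference is that the paper inlines this rotation argument (i.e.\ the proof of \cite[Lemma 3.3]{bayer2014projectivity} specialized to this setting) rather than citing the Positivity Lemma as a black box, which also sidesteps any concern about the base field being $\mathbb{C}$.
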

\begin{proof}
	It is easy to see that $E_s$ is of slope $\frac{-a(E)}{c(E)}$ for any point $s\in S$ from Lemma \ref{constant central charge}. We rotate $\sigma=(\mathcal{A},Z)$ by angle $\theta$ to make $E_s$ of phase $1$. Since $E_s$ is in the rotated global heart $e^{i\theta}\cdot\mathcal{A}$, then $E$ is in the corresponding global heart $e^{i\theta}\cdot \mathcal{A}_S$ by \cite[Corollary 3.3.3]{APsheaves}. Therefore, $Im(e^{i\theta}L_E(n))\geq 0$ for $n\gg 0$.
	
	Assume a rotation by angle $\theta$ makes $E_s$ of phase $1$, this means  $e^{i\theta}Z(E_s)\in\mathbb{R}_{<0}$. Since $E_s$ is of slope $\frac{-a(E)}{c(E)}$, it is equivalent to $$e^{i\theta}(a(E)+ic(E))=-\sqrt{a(E)^2+c(E)^2}.$$ 
	
Under this assumption, $Im(e^{i\theta}L_E(n))\geq 0$ for $n\gg 0$ is equivalent to 
	 $$Im(e^{i\theta}(b(E)+id(E)))\geq 0$$ since the imaginary part of linear term vanishes after the rotation of angle $\theta$.
	 
	  This implies $$Im(ic(E)-a(E))(b(E)+id(E))\geq 0,$$ which is equivalent to $b(E)c(E)-a(E)d(E)\geq 0$.
	
\end{proof}
Combing Positivity Lemma with Proposition \ref{semistable reduction}, we get the following lemma.
\begin{lemma}\label{refinement}
If $E\in\mathcal{A}_S$ is semi-stable with respect to $\mu_1$, then $b(E)c(E)-a(E)d(E)\geq 0$. 
\end{lemma}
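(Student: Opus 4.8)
The plan is to separate the cases $c(E)=0$ and $c(E)>0$. If $c(E)=0$, then $b(E)c(E)-a(E)d(E)=-a(E)d(E)$, which is nonnegative because Lemma~\ref{easy lemma}(ii) gives $a(E)\le 0$ and $d(E)\ge 0$; semistability is not needed here. So assume $c(E)>0$, so that $\mu_1(E)=-a(E)/c(E)$ is finite. The idea is to reduce to the fiberwise--semistable situation handled by Proposition~\ref{fiberwise semistable} and the Positivity Lemma~\ref{restatement}.

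The first observation is that such an $E$ must be torsion-free. Indeed, the maximal $S$-torsion subobject $T\subset E$ is supported on a finite subscheme of the curve $S$, so $p_{*}(T\otimes q^{*}\mathcal{O}(n))$ does not depend on $n$; hence $a(T)=c(T)=0$ and $\mu_1(T)=+\infty$. Were $T$ nonzero, this proper subobject would contradict $\mu_1$-semistability of $E$, so $T=0$. Being torsion-free, $E$ is in particular torsion-free with respect to every closed point $s\in S$, which is a smooth Cartier divisor on the curve; hence $E_s\in\mathcal{A}$ for all $s$ by \cite[Corollary 3.1.3]{APsheaves}, i.e.\ $E$ is $t$-flat. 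Applying Proposition~\ref{semistable reduction}, with $\phi$ the phase of $E$ (so $\phi\in(0,1)$ since $c(E)>0$), gives a short exact sequence $0\to K\to E\to Q\to 0$ in $\mathcal{A}_S$ with $K\in\mathcal{P}_S(\phi)$, $Q\in\mathcal{P}_S(<\phi)$ and $Z_S(Q)=0$.

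Now $K\subset E$ is again torsion-free, hence $t$-flat by the same reasoning, and $K\ne 0$ since $Z_S(K)=Z_S(E)\ne 0$; moreover $K\in\mathcal{P}_S(\phi)$ means $K$ is $\mu_2$-semistable of phase $\phi$ by Proposition~\ref{equivalenc}. Thus Proposition~\ref{fiberwise semistable} gives $K_s\in\mathcal{P}(\phi)$, in particular $K_s$ semistable, for every $s\in S$, and the Positivity Lemma~\ref{restatement} yields $b(K)c(K)-a(K)d(K)\ge 0$. Using additivity of $a,b,c,d$ and $a(Q)=c(Q)=0$, one has
\[
b(E)c(E)-a(E)d(E)=\bigl(b(K)c(K)-a(K)d(K)\bigr)+\bigl(b(Q)c(K)-a(K)d(Q)\bigr),
\]
so it remains to check the cross term is nonnegative. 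This is clear if $Q=0$; otherwise $Z_S(Q)=0$ makes $L_Q(n)$ the constant $b(Q)+id(Q)$, and $0<\psi(Q)<\phi<1$ forces $d(Q)>0$ together with $\mu_2(Q)=-b(Q)/d(Q)<-a(K)/c(K)=\mu_2(K)$; clearing the positive denominators $d(Q)$ and $c(K)$ gives $b(Q)c(K)-a(K)d(Q)>0$. Hence $b(E)c(E)-a(E)d(E)\ge 0$.

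I expect the main obstacle to lie in the $c(E)>0$ case, specifically in the $t$-flatness reductions: showing that a $\mu_1$-semistable object with $c(E)>0$ has no torsion subobject, citing the correct form of ``torsion-free over a smooth curve implies $t$-flat'' from Abramovich--Polishchuk, and verifying that the subobject $K$ produced by Proposition~\ref{semistable reduction} still meets the hypotheses of Proposition~\ref{fiberwise semistable} and the Positivity Lemma.
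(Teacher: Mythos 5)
Your proposal is correct and follows essentially the same route as the paper: the case split on $c(E)$, the reduction via Proposition~\ref{semistable reduction} to a torsion-free (hence $t$-flat) subobject $K$ to which Proposition~\ref{fiberwise semistable} and the Positivity Lemma~\ref{restatement} apply, and the same additivity computation for the cross term $b(Q)c(K)-a(K)d(Q)$. The only differences are cosmetic: you first establish torsion-freeness of $E$ itself before passing to $K$, and you spell out why $d(Q)>0$, both of which the paper leaves implicit.
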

\begin{proof}
If $c(E)=0$, then the inequality follows from Lemma \ref{easy lemma}. 

	Now, we assume $c(E)>0$. By Proposition \ref{semistable reduction}, we have a short exact sequence 
		$$0\rightarrow K\rightarrow E\rightarrow Q\rightarrow 0$$
			such that $K\in P_S(\phi)$, $Z_S(Q)=0$ and $Q\in P_S(<\phi)$. A torsion subobject of $K$ would destabilize $E$ with respect to $\mu_1$,  so $K$ is torsion free. Hence $K$ is t-flat by \cite[Corollary 3.1.3]{APsheaves}. By Proposition \ref{fiberwise semistable}, we can apply Lemma \ref{restatement} on $K$. Moreover, since $Z_S(Q)=0$ and $Q\in P_S(<\phi)$, we have $$a(E)=a(K),\ c(E)=c(K)$$ and $$\frac{-b(Q)}{d(Q)}<\frac{-a(K)}{c(K)}$$ if $Q$ is nonzero. Therefore, $$b(E)c(E)-a(E)d(E)=b(K)c(K)-a(K)d(K)+b(Q)c(K)-a(K)d(Q)\geq 0.$$
	
\end{proof}

For the simplicity of our statements and arguments, we introduce the following definition.
\begin{definition}\label{rational stability condition}
	If $(\mathcal{A},Z)$ is a stability condition, and the image of $Z$ lies in $\mathbb{Q}\oplus \mathbb{Q}i$, we call $(\mathcal{A},Z)$ a rational stability condition. We use $RStab(X)$ to denote the set of rational stability conditions on $D(X)$.
\end{definition}
\begin{remark}
	By \cite[Proposition 5.0.1]{APsheaves}, we know the heart $\mathcal{A}$ of a rational stability condition is Noetherian. In this case, the images of $a,b,c,d$ are rational. We focus on the rational stability conditions just for the simplicity of statements and arguments. All results and proofs in the rest of this paper can be easily adapted to the stability conditions whose central charge have discrete image. 
\end{remark}

We assume that $\sigma=(\mathcal{A},Z)$ is a rational stability condition, then for any positive rational number $t$, we can define the following slope function, coming from the weak stability function $Z_t(E)=a(E)t-d(E)+ic(E)t$:

$$\nu_t(E)=\begin{cases}
\frac{-a(E)t+d(E)}{c(E)t} &\text{if}\ c(E)\neq 0,\\ +\infty &\text{otherwise.}
\end{cases}$$

By part (i) and (ii) of Lemma \ref{easy lemma}, we know that $Z_t$ is a weak stability function on $\mathcal{A}_S$. Since $t$ is a fixed positive rational number, the pair $\sigma_t=(\mathcal{A}_S, Z_t)$ admits HN property because  $\mathcal{A}_S$ is Noetherian and $Z_t$ is discrete.
Then $ \mathcal{A}_S$ can be decomposed into two parts, torsion part $\mathcal{T}=\{E\in \mathcal{A}_S\mid \nu_{t,min}(E)>0\}$ and torsion free part $\mathcal{F}=\{E\in \mathcal{A}_S\mid \nu_{t,max}(E)\leq 0\}$. We can apply tilting method on this heart to get a new heart $\mathcal{A}_S^t=\langle \mathcal{T},\mathcal{F}[1]\rangle$.

\begin{prop}\label{the construction}
	For arbitrary $s,t\in \mathbb{R}_{>0}$, $Z_S^{s,t}(E)=c(E)s+b(E)+i(-a(E)t+d(E))$ is a stability function on $\mathcal{A}_S^t$.
\end{prop}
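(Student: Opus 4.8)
The plan is to verify the two defining conditions of a stability function on $\mathcal{A}_S^t$: that $\operatorname{Im} Z_S^{s,t}(E)\geq 0$ for every $E\in\mathcal{A}_S^t$, and that $\operatorname{Im} Z_S^{s,t}(E)=0$ forces $\operatorname{Re} Z_S^{s,t}(E)<0$ for nonzero $E$. The key point is that $Z_S^{s,t}$ is obtained from the weak stability function $Z_t(E)=a(E)t-d(E)+ic(E)t$ (whose positivity is part (i)--(ii) of Lemma~\ref{easy lemma}) by first rotating and rescaling and then tilting at slope $\nu_t=0$; so the imaginary-part condition will come essentially for free from the construction of the tilted heart. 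First I would observe that $\operatorname{Im} Z_S^{s,t}(E) = -a(E)t+d(E)$ is exactly (up to the positive factor $1$) the imaginary part of $Z_t$, so an object $T\in\mathcal{T}$ has $\nu_{t,\min}(T)>0$, hence $\operatorname{Im} Z_t(T)>0$ on HN factors with nonzero imaginary part and the real parts of the slope-$\infty$ factors are controlled; precisely, for $T\in\mathcal{T}$ one gets $\operatorname{Im} Z_S^{s,t}(T)\geq 0$, and for $F\in\mathcal{F}$ one has $\nu_{t,\max}(F)\leq 0$ so $\operatorname{Im} Z_S^{s,t}(F)\leq 0$, hence $\operatorname{Im} Z_S^{s,t}(F[1])\geq 0$. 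Since every $E\in\mathcal{A}_S^t$ is an extension of some $F[1]$ by some $T$ with $F\in\mathcal{F},T\in\mathcal{T}$, additivity of $\operatorname{Im} Z_S^{s,t}$ gives $\operatorname{Im} Z_S^{s,t}(E)\geq 0$.

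Next I would handle the boundary case $\operatorname{Im} Z_S^{s,t}(E)=0$. For such an $E$, the extension $0\to T\to E\to F[1]\to 0$ must have $\operatorname{Im} Z_S^{s,t}(T)=0$ and $\operatorname{Im} Z_S^{s,t}(F[1])=0$. The object $F$ then lies in the slope-zero part: $F$ is built from $\mu_1$-semistable (equivalently $\nu_t$-semistable) objects of slope exactly $0$, i.e. $c(F)t$-weighted objects with $-a(F)t+d(F)=0$. Here is where Lemma~\ref{refinement} enters: applied to the $\nu_t$-semistable factors $G$ of $F$ with $\nu_t(G)=0$, meaning $d(G)=a(G)t$, the inequality $b(G)c(G)-a(G)d(G)\geq 0$ becomes $b(G)c(G)\geq a(G)^2 t\geq 0$, and since $c(G)\geq 0$ (Lemma~\ref{easy lemma}(i)) we can extract sign information on $b(G)$: if $c(G)>0$ then $b(G)\geq 0$, so $\operatorname{Re} Z_S^{s,t}(G)=c(G)s+b(G)>0$, whence $\operatorname{Re} Z_S^{s,t}(G[1])<0$; and if $c(G)=0$ then $a(G)=0$ too (from $b(G)c(G)\geq a(G)^2 t$), so $d(G)=a(G)t=0$, and Lemma~\ref{easy lemma}(iii) gives $b(G)<0$, i.e. $\operatorname{Re} Z_S^{s,t}(G)=b(G)<0$ — but wait, one must check whether such a $G$ lands in $\mathcal{F}$ or $\mathcal{T}$: since $\nu_t(G)=0$ is not $>0$, such a $G$ belongs to $\mathcal{F}$, so it contributes $G[1]$ with $\operatorname{Re} Z_S^{s,t}(G[1])=-b(G)>0$. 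This sign tension is the subtle bookkeeping point, and I would resolve it by tracking carefully which slope-zero pieces sit in $\mathcal{T}$ versus $\mathcal{F}$; the slope-zero $\nu_t$-semistable objects are precisely in $\mathcal{F}$, and for them the two cases above combine to show $\operatorname{Re} Z_S^{s,t}(G[1])>0$ unless $G=0$ (the only $G$ with $c(G)=a(G)=d(G)=b(G)=0$ is zero). Meanwhile any $T\in\mathcal{T}$ with $\operatorname{Im} Z_S^{s,t}(T)=0$ has all HN factors of slope $+\infty$, hence $c=0$; by Lemma~\ref{easy lemma}(ii) each such factor has $a\leq 0$ and $d\geq 0$, and $\operatorname{Im} Z_S^{s,t}=-at+d=0$ forces $a=d=0$ (since $t>0$, $a\leq 0$, $d\geq 0$), then Lemma~\ref{easy lemma}(iii) gives $b<0$, so $\operatorname{Re} Z_S^{s,t}(T)=c\cdot s+b=b<0$ unless $T=0$.

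Putting these together: if $E\neq 0$ and $\operatorname{Im} Z_S^{s,t}(E)=0$, then in the decomposition $0\to T\to E\to F[1]\to 0$ at least one of $T$, $F$ is nonzero, and all contributing pieces have $\operatorname{Re}<0$ on their image in $Z_S^{s,t}$ (the $T$-part strictly negative when nonzero, the $F[1]$-part strictly positive real part would be wrong — so I must be careful that the $F[1]$ contribution is $-$(something $\geq 0$), hence $\leq 0$, and strictly negative when $F\neq 0$). Summing, $\operatorname{Re} Z_S^{s,t}(E)<0$. The main obstacle, as flagged, is the sign bookkeeping in the boundary case: one needs Lemma~\ref{refinement} not just as an inequality but to pin down that $b\geq 0$ on the slope-zero semistable pieces (so that after the shift $[1]$ the real part is $\leq 0$), and simultaneously to rule out the degenerate piece with all invariants zero via Lemma~\ref{easy lemma}(iii). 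Everything else — HN existence for $\sigma_t$, Noetherianity, discreteness of the image — is already established in the preceding results, so no additional structural input is needed; the support property is deferred to Section~\ref{Support} and is not part of this proposition.
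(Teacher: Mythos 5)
Your skeleton is the right one --- reduce to the decomposition of $E\in\mathcal{A}_S^t$ into a piece $F[1]$ with $F\in\mathcal{F}$ and a piece $T\in\mathcal{T}$, dispose of $T$ via Lemma \ref{easy lemma}, and reduce the boundary case to showing $c(F)s+b(F)>0$ when $F$ is $\nu_t$-semistable of slope $0$ --- and your treatment of $\operatorname{Im}Z_S^{s,t}\geq 0$ and of the torsion part $T$ agrees with the paper. But the decisive step has a genuine gap: you invoke Lemma \ref{refinement} for the \emph{$\nu_t$-semistable} factors $G$ of $F$, whereas that lemma only applies to objects semistable with respect to $\mu_1=-a/c$. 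The two slope functions are genuinely different, since $\nu_t=\frac{-at+d}{ct}=\mu_1+\frac{d}{ct}$ sees $d$ while $\mu_1$ does not; a $\nu_t$-semistable object of slope $0$ need not be $\mu_1$-semistable, so the inequality $b(G)c(G)-a(G)d(G)\geq 0$ is simply not available for it at this point. (The statement you actually want --- the discriminant inequality for $\sigma_t$-semistable objects --- is true, but it is Lemma \ref{basic support}, proved later by exactly the mechanism you are skipping.) Nor can you repair this by passing to the $\mu_1$-HN factors $Q_k$ of $F$: those do satisfy $b(Q_k)c(Q_k)\geq a(Q_k)d(Q_k)$, but they do not individually satisfy $d(Q_k)=a(Q_k)t$, so your pointwise deduction $b(G)c(G)\geq a(G)^2t\geq 0$ fails factor by factor.

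What the paper does at precisely this point is the real content of the proof: take the $\mu_1$-HN filtration of $F$ with factors $Q_k$ (all with $c(Q_k)>0$, hence torsion free, hence t-flat since $S$ is a curve, so Lemma \ref{refinement} --- ultimately the Bayer--Macr\`i Positivity Lemma --- applies to each), record that $-a(Q_k)/c(Q_k)$ is strictly decreasing and that $\nu_t$-semistability of $F$ at slope $0$ forces all left partial sums of $-a(Q_k)t+d(Q_k)$ to be $\leq 0$ and all right partial sums to be $\geq 0$, and then combine these three inputs by Abel summation to obtain $b(F)\geq\sum_k t\,a(Q_k)^2/c(Q_k)\geq 0$, whence $c(F)s+b(F)>0$. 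Without this summation (or an equivalent convexity argument) the boundary case is not closed. Two smaller points: in the tilted heart the extension runs $0\to F[1]\to E\to T\to 0$ with $F[1]$ the subobject (harmless here, since only additivity of $Z_S^{s,t}$ is used); and your case ``$c(G)=0$ with $\nu_t(G)=0$'' is vacuous, because $c(G)=0$ forces $\nu_t(G)=+\infty$ and puts $G$ in $\mathcal{T}$, so the ``sign tension'' you flag there never actually arises.
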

\begin{proof}
	It is easy to see that $-a(E)t+d(E)\geq 0$ for $E\in\mathcal{A}_S^t$ from the definition of $\mathcal{A}_S^t$. Now we need to prove that if $-a(E)t+d(E)=0$, then $c(E)s+b(E)<0$ for nonzero $E\in\mathcal{A}_S^t$. We have the following short exact sequence
	
	$$0\rightarrow F[1]\rightarrow E\rightarrow T\rightarrow 0$$where $F\in\mathcal{F}, T\in\mathcal{T}$. Therefore, we have to deal with the following two cases.
	
	Firstly, assume $-a(T)t+d(T)=0$. By definition of $\mathcal{T}$, we have $c(T)=0$ and $\nu_t(T)=+\infty$. Therefore, in this case $-a(T)t+d(T)=0$ is equivalent to $a(T)=d(T)=0$ by Lemma \ref{easy lemma}, which also implies $b(T)<0$.
	
	Now we deal with $F$. By definition of $\mathcal{F}$, we know that $c(F)>0$ if $F$ is nonzero. Thus $F\in\mathcal{F}$ and $-a(F)t+d(F)=0$ implies that $F$ is semi-stable with respect to $\sigma_t$. Therefore, it suffices to prove $c(F)s+b(F)>0$ in this case. 
	
	Take $$0=F_0\subset F_1 \subset F_2\subset \cdots \subset F_{l-1} \subset F_l=F$$ as the HN filtration of $F$ with respect to $\mu_1$. We let $Q_k=F_k/F_{k-1}$ be the $k$-th HN factor of $F$, for $1\leq k\leq l$. We have 
	
\begin{equation}
	 \frac{-a(Q_k)}{c(Q_k)}>\frac{-a(Q_{k+1})}{c(Q_{k+1})}
\end{equation}
	 by the property of HN filtration.
	 If $c(Q_1)=0$, then $F_1$ would destabilize $F$ with respect to $\nu_t$. Hence $c(Q_1)>0$, which implies $c(Q_k)>0$ for $1\le k\le l$. Moreover, $c(Q_k)>0$ and $Q_k$ is semi-stable with respect to $\mu_1$ implies $Q_k$ is torsion free, which is equivalent to being t-flat since $S$ is a curve. Applying Lemma \ref{refinement}, we get
	 
	 \begin{equation}
	 b(Q_k)c(Q_k)\geq a(Q_k)d(Q_k)
	 \end{equation}
	 for $1\leq k\leq l$. The last piece of data is that $F$ is semi-stable of slope $0$ with respect to $\nu_t$. We have
	 \begin{equation}
	 \frac{\Sigma_{k=1}^j (-a(Q_k)t+d(Q_k))}{\Sigma_{k=1}^jc(Q_k)t}\leq 0\leq \frac{\Sigma_{k=j}^l(-a(Q_k)t+d(Q_k))}{\Sigma_{k=j}^lc(Q_k)t}
	 \end{equation}
	for any $1\leq j\leq l$.

	Using (1), (2) and (3), we are able to prove the following inequality:
	\begin{equation*}
	\begin{split}
	b(F)=\Sigma_{k=1}^lb(Q_k) &\geq \Sigma_{k=1}^l \frac{a(Q_k)d(Q_k)}{c(Q_k)} \\ & =\frac{a(Q_l)}{c(Q_l)}d(F)-\Sigma_{j=1}^{l-1}\Sigma_{k=1}^jd(Q_k)(\frac{a(Q_{j+1})}{c(Q_{j+1})}-\frac{a(Q_j)}{c(Q_j)} )\\ & \geq \frac{a(Q_l)}{c(Q_l)}a(F)t-\Sigma_{j=1}^{l-1}\Sigma_{k=1}^ja(Q_k)t(\frac{a(Q_{j+1})}{c(Q_{j+1})}-\frac{a(Q_j)}{c(Q_j)} )\\ &  =\Sigma_{k=1}^l\frac{ta(Q_k)^2}{c(Q_k)} \\& \geq 0.
	\end{split}
	\end{equation*}
	The first inequality is from (2) and the fact $c(Q_k)>0$, the second equality is Abel's summation formula. The second inequality comes from (1) and the left side of (3). The last equality is Abel's summation formula.
	
	Therefore $c(F)s+b(F)>0$ for $s\in\mathbb{R}_{>0}$. The proof is complete.
\end{proof}
In fact, we can prove that the pair $(\mathcal{A}_S^t,Z_S^{s,t})$ is a stability condition.

\begin{theorem}\label{map of rational stability conditions}
	If $(\mathcal{A},Z)$ is a rational stability condition on $D(X)$, then the pair $\sigma_{s,t}=(\mathcal{A}_S^t, Z_S^{s,t}) $ is a rational stability condition on $D(X\times S)$ for $s,t\in\mathbb{Q}_{>0}$.
\end{theorem}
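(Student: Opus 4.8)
My plan is to assemble the pieces that are already in place. Proposition \ref{the construction} shows that $Z_S^{s,t}$ is a stability function on $\mathcal{A}_S^t$, so only three things remain to be checked: rationality of the central charge, the Harder--Narasimhan property, and the support property. First I would dispose of rationality. Since $\sigma=(\mathcal{A},Z)$ is rational, the group homomorphisms $a,b,c,d\colon K(\mathcal{A}_S)\to\mathbb{R}$ take values in $\mathbb{Q}$, so, as $s,t\in\mathbb{Q}_{>0}$, we have $Z_S^{s,t}(E)=c(E)s+b(E)+i\bigl(-a(E)t+d(E)\bigr)\in\mathbb{Q}\oplus\mathbb{Q}i$ for every $E$; since $Z_S^{s,t}$ factors through a lattice of finite rank, its image is a finitely generated subgroup of $\mathbb{Q}\oplus\mathbb{Q}i$, hence discrete in $\mathbb{C}$. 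Second, the support property is not proved here: it is the content of Section \ref{Support}, where a Bogomolov--Gieseker type quadratic inequality for $\sigma_{s,t}$-semistable objects is established by induction on $\dim X$, and at this point I would simply forward-reference that statement.

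The substantive step is the Harder--Narasimhan property. Because $Z_S^{s,t}$ has discrete image, by \cite[Proposition 2.4]{bridgeland2007stability} it suffices to show that $\mathcal{A}_S^t$ is Noetherian. For this I would run the usual tilting bookkeeping: given an ascending chain of subobjects $E_1\subseteq E_2\subseteq\cdots$ of a fixed $E\in\mathcal{A}_S^t$, the imaginary part $Im(Z_S^{s,t}(E_i))=-a(E_i)t+d(E_i)$ is nonnegative (as in the proof of Proposition \ref{the construction}), bounded above by $Im(Z_S^{s,t}(E))$, and discrete, hence eventually constant; from then on each successive quotient $E_{i+1}/E_i$ has vanishing imaginary part, so the stability-function property forces $Re(Z_S^{s,t})$ to strictly decrease along the chain, and I would bound this decrease from below (and thereby terminate the chain) by passing to $\mathcal{A}_S$ via the defining sequences $0\to F_i[1]\to E_i\to T_i\to 0$ with $F_i\in\mathcal{F}$, $T_i\in\mathcal{T}$, combining the Noetherianity of $\mathcal{A}_S$ with the inequalities of Lemmas \ref{easy lemma} and \ref{refinement} for the Harder--Narasimhan factors that occur; alternatively the Noetherianity of $\mathcal{A}_S^t$ can be read off from the results of \cite[Section 5]{APsheaves}. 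Once $\mathcal{A}_S^t$ is Noetherian, HN filtrations exist, and together with the first two points this gives that $\sigma_{s,t}=(\mathcal{A}_S^t,Z_S^{s,t})$ is a rational stability condition.

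The step I expect to be the main obstacle (setting aside the support property, which is intentionally deferred to Section \ref{Support}) is precisely the Noetherianity of the tilted heart $\mathcal{A}_S^t$: tilting a Noetherian heart at a torsion pair need not preserve Noetherianity, so one must exploit the specific structure of the torsion pair coming from the weak stability condition $\sigma_t$ --- namely the discreteness of $Z_t$, the Noetherianity of $\mathcal{A}_S$, and the positivity inequalities established above --- to exclude infinite chains in the $\mathcal{F}[1]$ direction. Everything else in the argument is formal.
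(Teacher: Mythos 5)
Your proposal follows essentially the same route as the paper: Proposition \ref{the construction} supplies the stability-function axiom, the support property is deferred to Section \ref{Support} (Lemma \ref{support property}), and the remaining content is the Noetherianity of the tilted heart $\mathcal{A}_S^t$, which the paper likewise reduces to the discreteness of the central charge, the Noetherianity of $\mathcal{A}_S$, and the positivity inequality of Lemma \ref{refinement}, following the argument of Piyaratne--Toda. Your sketch of that last step identifies exactly the right mechanism (eventually-constant imaginary part along the chain, then a strictly decreasing real part bounded below via Lemma \ref{refinement} applied to the $\mathcal{F}$-components), which is the strategy the paper carries out in detail.
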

\begin{proof}
	Firstly, we need to prove that $\mathcal{A}_S^t$ is Noetherian. The idea of the proof is essentially the same as in \cite{piyaratne2019moduli}. Readers should consult \cite[Section 2.3]{piyaratne2019moduli} for details.
	
	 Suppose there exists an object $E\in\mathcal{A}_S^t$ and an infinite sequence of surjections $$  E\twoheadrightarrow E_1\twoheadrightarrow E_2\twoheadrightarrow\cdots.$$  
	 
	 Since $a,d$ are discrete and $Im(Z_S^{s,t}(F))\geq 0$ for any $F\in \mathcal{A}_S^t$, we may assume $Im(Z_S^{s,t}(E_i))=Im(Z_S^{s,t}(E))$ for all $i$. Then consider the following short exact sequences in $\mathcal{A}_S^t$
	
	$$0\rightarrow F_i\rightarrow E\rightarrow E_i\rightarrow 0.$$ 
	
	We have $Im(Z_S^{s,t}(F_i))=0$ by assumption. By the Noetherianity of $\mathcal{A}_S$, we can assume that $\mathcal{H}_{\mathcal{A}_S}^0(E)=\mathcal{H}_{\mathcal{A}_S}^0(E_i)$ and $\mathcal{H}_{\mathcal{A}_S}^{-1}(F_i)$ is independent of $i$. By setting $V=\mathcal{H}_{\mathcal{A}_S}^{-1}(E)/\mathcal{H}_{\mathcal{A}_S}^{-1}(F_i)$, we have the following short exact sequence in $\mathcal{A}_S$ $$0\rightarrow V\rightarrow \mathcal{H}_{\mathcal{A}_S}^{-1}(E_i)\rightarrow \mathcal{H}_{\mathcal{A}_S}^0(F_i)\rightarrow 0.$$ 
	
	Then we look at the short exact sequences $$0\rightarrow F_i\rightarrow F_j\rightarrow F_{ij}\rightarrow 0,\ for\ i<j.$$  
	
	Since $\mathcal{H}_{\mathcal{A}_S}^{0}(F_{i}), \mathcal{H}_{\mathcal{A}_S}^{0}(F_{ij})\in\mathcal{T}$ and $$Im(Z_S^{s,t}(\mathcal{H}_{\mathcal{A}_S}^{0}(F_{i})))=Im(Z_S^{s,t}(\mathcal{H}_{\mathcal{A}_S}^{0}(F_{ij})))=0,$$ by the proof of Proposition \ref{the construction}, we get $$a(\mathcal{H}_{\mathcal{A}_S}^{0}(F_{i}))=c(\mathcal{H}_{\mathcal{A}_S}^{0}(F_{i}))=d(\mathcal{H}_{\mathcal{A}_S}^{0}(F_{i}))=0$$ and  $$a(\mathcal{H}_{\mathcal{A}_S}^{0}(F_{ij}))=c(\mathcal{H}_{\mathcal{A}_S}^{0}(F_{ij}))=d(\mathcal{H}_{\mathcal{A}_S}^{0}(F_{ij}))=0.$$ Hence, we  have $$c(\mathcal{H}_{\mathcal{A}_S}^{-1}(F_{ij}))=c(\mathcal{H}_{\mathcal{A}_S}^{-1}(F_{j}))-c(\mathcal{H}_{\mathcal{A}_S}^{-1}(F_{i}))=0$$ and $\mathcal{H}_{\mathcal{A}_S}^{-1}(F_{ij})\in \mathcal{F}$ since we assumed that $\mathcal{H}_{\mathcal{A}_S}^{-1}(F_i)$ is independent of $i$.  From the definition of $\mathcal{F}$ we know that $\mathcal{H}_{\mathcal{A}_S}^{-1}(F_{ij})=0$. Therefore, we have $$\mathcal{H}_{\mathcal{A}_S}^{0}(F_{1})\subset \mathcal{H}_{\mathcal{A}_S}^{0}(F_{2})\cdots,$$ which gives us an infinite filtration in $\mathcal{F}$ $$\mathcal{H}_{\mathcal{A}_S}^{-1}(E_{1})\subset \mathcal{H}_{\mathcal{A}_S}^{-1}(E_{2})\cdots$$ where $\mathcal{H}_{\mathcal{A}_S}^{-1}(E_{j})/\mathcal{H}_{\mathcal{A}_S}^{-1}(E_{i})=\mathcal{H}_{\mathcal{A}_S}^{0}(F_{ij})$.
	
	Therefore, by Lemma \ref{easy lemma}, we know that $b(\mathcal{H}_{\mathcal{A}_S}^{0}(F_{ij}))<0$ if $\mathcal{H}_{\mathcal{A}_S}^{0}(F_{ij})\neq 0$. Hence $b(\mathcal{H}_{\mathcal{A}_S}^{-1}(E_{j}))<b(\mathcal{H}_{\mathcal{A}_S}^{-1}(E_{i}))$ for any $i<j$ if $\mathcal{H}_{\mathcal{A}_S}^{0}(F_{ij})\neq 0$, which is equivalent to $F_{ij}\neq 0$. 
	
	 Let us use $Q_i$ to denote $\mathcal{H}_{\mathcal{A}_S}^{-1}(E_{i})$. As in the proof of \cite[Lemma 2.15]{piyaratne2019moduli}, we can assume $Q_1$ is semi-stable with respect to $\sigma_t$ by induction on the number of HN factors of $Q_1$. Hence by \cite[Sublemma 2.16]{piyaratne2019moduli}, $Q_i$ is semi-stable with respect to $\sigma_t$ for all $i$. By Lemma \ref{refinement}, we know that $$b(Q_i)c(Q_i)-a(Q_i)d(Q_i)\geq 0,$$ where $a,d,c$ are constant on $Q_i$ and $b$ decreases as $i$ grows. Since $b$ is discrete, the inequalities hold for all $i$ only if $$b(Q_i)=b(Q_{i+1})$$ for $i\gg 0$, or $$c(Q_i)=0.$$ The first case implies $F_{ij}=0$ for $i$ sufficiently large, the second case combining the fact $Q_i\in\mathcal{F}$ forces $Q_i=0$. In either case, the filtration terminates after finite steps. 
	 
	 By Lemma \ref{support property}, we also have the support property.

\end{proof}

\begin{remark}
	In fact, our construction also works with analogue proofs for stability conditions on Kuznetsov components; please see \cite{bayer2017stability} and \cite{bayer2019stability}.
\end{remark}

We conclude this section by providing a lemma, which might be useful in characterizing geometric stability conditions.

\begin{lemma}\label{geometricity}
	Suppose $E$ is an object in $\mathcal{A}_S$. If $a(E)=c(E)=d(E)=0$ and $b(E)$ is minimal in the image of the real part of $Z$. Then $E$ is a simple object in $\mathcal{A}_S^t$.
\end{lemma}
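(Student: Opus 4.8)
The plan is to show that any short exact sequence $0 \to A \to E \to B \to 0$ in $\mathcal{A}_S^t$ with $A,B$ nonzero leads to a contradiction with the minimality of $b(E)$. First I would record what the hypotheses say about $E$: since $a(E)=c(E)=d(E)=0$, the polynomial $L_E(n)$ is the constant $b(E)$, so in particular $Z_S^{s,t}(E) = b(E) \in \mathbb{R}$ and $\mathrm{Im}\, Z_S^{s,t}(E) = -a(E)t+d(E) = 0$. By Lemma \ref{easy lemma}(iii), $b(E) < 0$, consistent with $E$ being nonzero, and by hypothesis $b(E)$ is the smallest value in the image of $\mathrm{Re}\, Z$ (equivalently of $b$). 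The key point is that $b$ is additive on short exact sequences, so for any subobject $A \subset E$ in $\mathcal{A}_S^t$ with quotient $B$ we have $b(A) + b(B) = b(E)$.

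Next I would analyze the invariants of $A$ and $B$. Since $\mathrm{Im}\, Z_S^{s,t}(E) = 0$ and $\mathrm{Im}\, Z_S^{s,t}$ is a weak stability function on $\mathcal{A}_S^t$ (nonnegative imaginary part), both $\mathrm{Im}\, Z_S^{s,t}(A) = -a(A)t + d(A) \geq 0$ and $\mathrm{Im}\, Z_S^{s,t}(B) \geq 0$ must vanish. Now I apply the case analysis from the proof of Proposition \ref{the construction} to each of $A$ and $B$: writing the defining sequence $0 \to F[1] \to A \to T \to 0$ with $F \in \mathcal{F}$, $T \in \mathcal{T}$, the vanishing of the imaginary part forces (on the $T$-part) $a(T)=c(T)=d(T)=0$ with $b(T)<0$, and (on the $F$-part) that $F$ is $\sigma_t$-semistable of slope $0$ so that $c(F)s + b(F) > 0$, i.e. $b(F) > -c(F)s$; combined with $c(A) = -c(F) \leq 0$ from the tilt, we must actually have $c(F) = 0$ and $b(F) \ge 0$ — but then Lemma \ref{easy lemma} applied to $F$ (with $a(F)=c(F)=0$, $d(F) = a(A)t$... ) pins down $F$, and ultimately one concludes $a(A) = c(A) = d(A) = 0$ and $b(A) \le 0$, with $b(A) < 0$ unless $A = 0$; symmetrically for $B$. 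Thus if both $A$ and $B$ are nonzero we get $b(A), b(B) < 0$, hence $b(A) > b(E)$ and $b(B) > b(E)$ strictly (as $b(E) = b(A) + b(B)$ with both summands negative). This contradicts the minimality of $b(E)$ in the image of $b$, unless one of $A,B$ is zero. Therefore $E$ has no nontrivial subobjects or quotients, i.e. $E$ is simple in $\mathcal{A}_S^t$.

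The main obstacle I expect is the bookkeeping in the second step: showing carefully that the vanishing of $\mathrm{Im}\, Z_S^{s,t}$ on a subobject $A \in \mathcal{A}_S^t$ actually forces $a(A) = c(A) = d(A) = 0$ rather than merely $-a(A)t + d(A) = 0$. This requires reusing the structure of the tilt $\mathcal{A}_S^t = \langle \mathcal{T}, \mathcal{F}[1]\rangle$ together with Lemma \ref{refinement} and Lemma \ref{easy lemma}, essentially repeating the argument inside the proof of Proposition \ref{the construction}; I would either cite that proof or factor out a small lemma stating: for $E' \in \mathcal{A}_S^t$, if $-a(E')t + d(E') = 0$ then $a(E') = c(E') = d(E') = 0$ and $b(E') \le 0$, with equality in the last only for $E' = 0$. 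Once that lemma is in hand, the simplicity of $E$ is immediate from additivity of $b$ and the minimality hypothesis. A secondary subtlety is that "simple" here should be interpreted in the abelian category $\mathcal{A}_S^t$ (no proper nonzero subobjects), and one should note $E \neq 0$ so that the statement is non-vacuous; this follows since $b(E) < 0$.
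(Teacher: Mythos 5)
Your endgame (both the subobject and the quotient of $E$ would have $a=c=d=0$ and strictly negative $b$ unless zero, contradicting minimality of $b(E)$ by additivity of $b$) matches the paper's, but the step where you establish $a(A)=c(A)=d(A)=0$ has a genuine gap, and the ``small lemma'' you propose to factor out is false. For a general $E'\in\mathcal{A}_S^t$, the condition $-a(E')t+d(E')=0$ does \emph{not} force $c(E')=0$: whenever $F\in\mathcal{F}$ is $\sigma_t$-semistable of slope exactly $0$ (so $-a(F)t+d(F)=0$ but $c(F)>0$), the object $E'=F[1]\in\mathcal{A}_S^t$ has $-a(E')t+d(E')=0$ and $c(E')=-c(F)<0$. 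Such $F$ do exist; e.g.\ in the $\mathbb{P}^1\times\mathbb{P}^1$ example at the end of Section \ref{Section 3}, $\mathcal{O}(0,-1)$ has $L(n)=in$, hence $a=b=d=0$, $c=1$ and $\nu_t=0$. Your intermediate assertion ``combined with $c(A)=-c(F)\le 0$ from the tilt, we must actually have $c(F)=0$'' does not follow from anything: $c(F)s+b(F)>0$ and $c(F)\geq 0$ are perfectly consistent with $c(F)>0$, and the case analysis in Proposition \ref{the construction} only ever yields the sign of $c(F)s+b(F)$, never the vanishing of $c(F)$. So the vanishing of $\mathrm{Im}\,Z_S^{s,t}$ on $A$ and $B$ alone cannot pin down their invariants.

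What is missing is an argument that uses the fact that $A$ and $B$ are a subobject and a quotient of $E$ itself, not merely objects with vanishing imaginary part. The paper does this by taking cohomology with respect to $\mathcal{A}_S$: since $\mathcal{H}^{-1}_{\mathcal{A}_S}(E)=0$, one gets the exact sequence $0\to\mathcal{H}^{-1}_{\mathcal{A}_S}(B)\to\mathcal{H}^{0}_{\mathcal{A}_S}(A)\to E\to\mathcal{H}^{0}_{\mathcal{A}_S}(B)\to 0$ in $\mathcal{A}_S$. Because $c\geq 0$ on $\mathcal{A}_S$, $c(E)=0$ and Lemma \ref{easy lemma} applies, the cokernel of $\mathcal{H}^{-1}_{\mathcal{A}_S}(B)\hookrightarrow\mathcal{H}^{0}_{\mathcal{A}_S}(A)$ has $a=c=d=0$, so these two objects have the same $Z_t$-charge; but one lies in $\mathcal{F}$ (slopes $\leq 0$, with $c>0$ if nonzero) and the other in $\mathcal{T}$ (slopes $>0$), which forces $\mathcal{H}^{-1}_{\mathcal{A}_S}(B)=0$. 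Only after this does the sequence become a short exact sequence in $\mathcal{T}\subset\mathcal{A}_S$, where your additivity-of-$b$ and minimality argument goes through. You need to replace the numerical lemma with this cohomological step (or an equivalent one); as written, the proof does not close.
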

\begin{proof}
	Since $c(E)=0$, we have $E\in\mathcal{T}$, hence $E\in\mathcal{A}_S^t$. Suppose we have a short exact sequence 
	$$0\rightarrow K\rightarrow E\rightarrow Q\rightarrow 0$$ in $\mathcal{A}_S^t$.  Then taking cohomology with respect to $\mathcal{A}_S$ gives us an exact sequence 
	
	$$0\rightarrow\mathcal{H}_{\mathcal{A}_S}^{-1}(Q)\rightarrow \mathcal{H}_{\mathcal{A}_S}^{0}(K)\rightarrow E\rightarrow\mathcal{H}_{\mathcal{A}_S}^{0}(Q)\rightarrow 0$$ in $\mathcal{A}_S$. By assumption we know that $a(E)=c(E)=d(E)=a(\mathcal{H}_{\mathcal{A}_S}^{0}(Q))=c(\mathcal{H}_{\mathcal{A}_S}^{0}(Q))=d(\mathcal{H}_{\mathcal{A}_S}^{0}(Q))=0$, hence $\mathcal{H}_{\mathcal{A}_S}^{-1}(Q)$ and $\mathcal{H}_{\mathcal{A}_S}^{0}(K)$ are of the same slope with respect to $\nu_t$. This contradicts the definition of $\mathcal{T}$ and $\mathcal{F}$ unless $\mathcal{H}_{\mathcal{A}_S}^{-1}(Q)=0$. Therefore, we have the following short exact sequence $$0\rightarrow \mathcal{H}_{\mathcal{A}_S}^{0}(K)\rightarrow E\rightarrow\mathcal{H}_{\mathcal{A}_S}^{0}(Q)\rightarrow 0$$ in $\mathcal{T}$. Since $b(E)$ is minimal, we know that either $K$ or $Q$ must be zero.
\end{proof}

\section{large volume limit and support property}\label{Support}

	Suppose we have a rational stability condition $\sigma=(\mathcal{A},Z)$ on $D(X)$. By Definition \ref{first WSC}, $Z$ factors as $K(\mathcal{A})=K(D(X))\xrightarrow{v} \Lambda\xrightarrow{g} \mathbb{C}$.  We assume $\sigma$ satisfies the support property with respect to the quadratic form $Q$ on $\Lambda\otimes\mathbb{R}$.
	
	There is an equivalent definition of support property.
	
	\begin{definition}[{\cite[Section 1.2]{kontsevich2008stability}}] Pick a norm $\|\underbar{\ }\|$ on $\Lambda\otimes\mathbb{R}$. The stability condition $\sigma$ satisfies the support property if there exists a constant $C>0$ such that for all $\sigma$-semistable objects $0\neq E\in D(X)$, we have $$\|v(E)\|\leq C|Z(E)|.$$ 
		
		Then the quadratic form $Q$ can be written as $Q(w)\coloneqq C^2|Z(w)|^2-\|w\|^2.$
		
	\end{definition}
	
	We have the following factorization of $Z_S^{s,t}$.
	\begin{lemma}

	The central charge $Z_S^{s,t}$ factors as $$K(\mathcal{A}_S)\xrightarrow{(v_1,v_2)^T} \Lambda\oplus\Lambda\xrightarrow{(sIm(g)-itRe(g), g)} \mathbb{C},$$ where $$v_1(E)=v(p_*(E\otimes q^*\mathcal{O}(n)))-v(p_*(E\otimes\ q^*\mathcal{O}(n-1))),$$  $$v_2(E)=v(p_*(E\otimes q^*\mathcal{O}(n)))-n\cdot v_1(E)$$ for $n\gg 0$,  and the superscript $T$ in $(v_1,v_2)^T$ stands for transpose. We also have $Im(g)\circ v_1=c$, $Re(g)\circ v_1=a$, $Im(g)\circ v_2=d$, $Re(g)\circ v_2=b$.
	
\end{lemma}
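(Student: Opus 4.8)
The plan is to upgrade the polynomiality argument in the proof of Theorem~\ref{glabal weak stablity condition}, which only records the affine structure of the complex-valued function $L_E(n)=Z(p_*(E\otimes q^*\mathcal{O}(n)))$, to the level of the lattice $\Lambda$, and then to match real and imaginary parts. First I would show that $v_1$ and $v_2$ are well defined, i.e.\ that the $\Lambda$-valued function $n\mapsto v(p_*(E\otimes q^*\mathcal{O}(n)))$ is affine for $n\gg0$. Fix $E\in\mathcal{A}_S$ and a member $H\in|\mathcal{O}(1)|$, with inclusion $c\colon H\hookrightarrow S$. Since $S$ is a curve, $H$ is zero-dimensional, so every line bundle on $H$ is trivial; in particular $c^*\mathcal{O}(n)\cong\mathcal{O}_H$ for every $n$, and hence the third term $p_*(E\otimes q^*c_*c^*\mathcal{O}(n))$ of the triangle
\[
p_*(E\otimes q^*\mathcal{O}(n-1))\rightarrow p_*(E\otimes q^*\mathcal{O}(n))\rightarrow p_*(E\otimes q^*c_*c^*\mathcal{O}(n))\xrightarrow{[1]}
\]
from the proof of Theorem~\ref{glabal weak stablity condition} is, via the derived base-change and projection-formula identification used there, isomorphic to an object independent of $n$. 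Reading this triangle in $K(D(X))$ shows that $v(p_*(E\otimes q^*\mathcal{O}(n)))-v(p_*(E\otimes q^*\mathcal{O}(n-1)))$ is independent of $n$, so $v(p_*(E\otimes q^*\mathcal{O}(n)))$ is affine with slope $v_1(E)$ and intercept $v_2(E)$, both independent of $n$ for $n\gg0$.

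Next I would check that $v_1$ and $v_2$ are additive, hence factor through $K(\mathcal{A}_S)$. A short exact sequence $0\to A\to B\to C\to 0$ in $\mathcal{A}_S$ gives, after applying the triangulated functor $p_*(-\otimes q^*\mathcal{O}(n))$, a triangle in $D(X)$, so $v(p_*(B\otimes q^*\mathcal{O}(n)))=v(p_*(A\otimes q^*\mathcal{O}(n)))+v(p_*(C\otimes q^*\mathcal{O}(n)))$ for every $n$; reading off the coefficients in $n$ gives $v_i(B)=v_i(A)+v_i(C)$. Thus $v_1,v_2$ descend to group homomorphisms $K(\mathcal{A}_S)=K(D(X\times S))\to\Lambda$, and $(v_1,v_2)^T$ maps into the finite-rank lattice $\Lambda\oplus\Lambda$. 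It then remains to identify the composite. Since $g\circ v=Z$ and $L_E(n)=Z(p_*(E\otimes q^*\mathcal{O}(n)))$, we get $g(v_1(E))=L_E(n)-L_E(n-1)=a(E)+ic(E)$, so $Re(g)\circ v_1=a$ and $Im(g)\circ v_1=c$; likewise $g(v_2(E))=L_E(n)-n(a(E)+ic(E))=b(E)+id(E)$, so $Re(g)\circ v_2=b$ and $Im(g)\circ v_2=d$. Hence
\begin{multline*}
(sIm(g)-itRe(g))(v_1(E))+g(v_2(E)) = s\,c(E)-it\,a(E)+b(E)+id(E) \\
= c(E)s+b(E)+i(-a(E)t+d(E)) = Z_S^{s,t}(E),
\end{multline*}
which is the asserted factorization.

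The only step that requires genuine care is the first one: one must verify that the $\Lambda$-valued function $n\mapsto v(p_*(E\otimes q^*\mathcal{O}(n)))$ itself is affine-linear, not merely its image under $Z$. This is precisely where $\dim S=1$ enters---then $H$ is zero-dimensional, $c^*\mathcal{O}(n)$ is trivial, and the boundary object in the triangle is literally $n$-independent. In higher dimension the analogous term would only be polynomial in $n$, and one would have to track the binomial coefficients coming from the Koszul resolution as in \cite{APsheaves}; this is not needed here. Everything else is the bookkeeping already contained in Theorem~\ref{glabal weak stablity condition} and Corollary~\ref{Independence corollary}.
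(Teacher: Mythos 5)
Your proposal is correct and follows essentially the same route as the paper: both establish additivity of $v_1,v_2$ by applying $v$ to the triangle coming from a short exact sequence in $\mathcal{A}_S$, both prove $n$-independence of $v_1$ via the restriction triangle to the zero-dimensional divisor $D\in|\mathcal{O}(1)|$ (with $v_2$ then being the intercept of the resulting affine function), and both identify $a,b,c,d$ by composing with $g$ and comparing with $L_E(n)$. The only difference is the order of the two verifications, which is immaterial.
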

\begin{proof}
Assume we have a short exact sequence $$0\rightarrow K\rightarrow E\rightarrow Q\rightarrow 0$$ in $\mathcal{A}_S$. This gives a triangle $$ 	p_*(K\otimes q^*\mathcal{O}(n))\rightarrow p_*(E\otimes q^*\mathcal{O}(n))\rightarrow p_*(Q\otimes q^*\mathcal{O}(n))\xrightarrow{[1]}p_*(K\otimes q^*\mathcal{O}(n))[1]$$ in $D(X)$. Since $v:K(\mathcal{A})=K(D(X))\rightarrow\Lambda$ is a group homomorphism, we get $$v(p_*(E\otimes q^*\mathcal{O}(n)))=v(p_*(K\otimes q^*\mathcal{O}(n)))+v(p_*(Q\otimes q^*\mathcal{O}(n)))$$ for any $n\gg 0$. This proves that $v_1(E)=v_1(K)+v_1(Q)$. Therefore, $v_1$ is a group homomorphism. Similarly, we can prove $v_2$ is a group homomorphism too. 

We also need to prove that $v_1$ and $v_2$ are independent of $n$ when $n\gg 0$. For $v_1$, the independence follows from the following triangle $$p_*(E\otimes q^*\mathcal{O}(n-1))\rightarrow p_*(E\otimes q^*\mathcal{O}(n))\rightarrow p_*(E|_D)\xrightarrow{[1]}p_*(E\otimes q^*\mathcal{O}(n-1))[1],$$ where $D\in|\mathcal{O}(1)|$ is a $0$ dimensional subscheme of finite length and $E|_D$ is the derived pull-back of $E$ along $X\times D$. Hence we have $v_1(E)=v(p_*(E|_D))$, which is independent of $n$.

The independence of $v_2$ follows from the independence of $v_1$ by simple calculation. 

Apply $g$ on $v_1$, we get $g\circ v_1(E)=g\circ v(p_*(E\otimes q^*\mathcal{O}(n)))-g\circ v(p_*(E\otimes q^*\mathcal{O}(n-1)))=L_E(n)-L_E(n-1)=a(E)+ic(E)$. Hence we get $Re(g)\circ v_1=a$ and $Im(g)\circ v_1=c$. Similarly, we can prove that $Im(g)\circ v_2=d$, $Re(g)\circ v_2=b$. 

Therefore, we have $$(sIm(g)-itRe(g), g)\circ(v_1,v_2)^T(E)=sc(E)-ita(E)+b(E)+id(E)=Z_S^{s,t}(E).$$
\end{proof}
	\begin{definition}
		We call $w\in\Lambda$ a semi-stable vector if $w=v(E)$ for some semi-stable object $E\in\mathcal{A}$.
	\end{definition}
	
	\begin{lemma}\label{quadratic form}
		If $E\in\mathcal{A}_S$ is semi-stable with respect to $\mu_1$, then $v_1(E)$ is a semi-stable vector.
	\end{lemma}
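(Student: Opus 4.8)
The plan is to reduce the claim to the Positivity-Lemma-style input already available, namely Lemma \ref{refinement} combined with the semistable-reduction Proposition \ref{semistable reduction}. Recall $v_1(E) = v(p_*(E|_D))$ for a general $0$-dimensional $D \in |\mathcal{O}(1)|$, and $g\circ v_1 = a + ic$. So to show $v_1(E)$ is a semistable vector in the sense of Definition, I must produce a $\mu_\sigma$-semistable object $E' \in \mathcal{A}$ with $v(E') = v_1(E)$. The natural candidate is the fiber $E_s = i_s^*E$ for a general point $s \in S$, since $v(p_*(E|_D)) = \sum_{s\in D} v(E_s)$ when $E$ is $t$-flat; but $E$ need only be $\mu_1$-semistable, not $t$-flat, so the first step is to run Proposition \ref{semistable reduction}: when $c(E)>0$ there is a short exact sequence $0 \to K \to E \to Q \to 0$ with $K \in \mathcal{P}_S(\phi)$, $Q \in \mathcal{P}_S(<\phi)$, $Z_S(Q)=0$. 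As in the proof of Lemma \ref{refinement}, $K$ is torsion free hence $t$-flat, and $v_1(E) = v_1(K) + v_1(Q)$.

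The second step is to analyze $v_1(Q)$. Since $Z_S(Q)=0$, i.e. $a(Q)=c(Q)=0$, and $L_Q(n)-L_Q(n-1) = L_{Q|_D}(n) = g(v_1(Q))\cdot$(something), we get $g(v_1(Q)) = a(Q) + ic(Q) = 0$, so $v_1(Q) \in \ker(g)$. That alone does not make it a semistable vector, so I would instead argue that $v_1(Q)=0$ outright, or absorb $Q$ differently. In fact $Q \in \mathcal{P}_S(<\phi)$ with $Z_S(Q)=0$ means every $\mu_2$-HN factor of $Q$ has $Z_S=0$ and phase $<\phi \le 1$; such objects are supported in proper dimension over $S$ and $Q|_D$ for general $D$ is then a genuine object of $\mathcal{A}_D$ whose class need not vanish. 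The cleanest route: reduce to $K$ and show directly that $v_1(K) = v(K_s)$ for general $s$ with $K_s$ semistable (this is Proposition \ref{fiberwise semistable} applied to $K$, which is legitimate because $K$ is $t$-flat and $\mu_2$-semistable of phase $\phi$, so $K_s \in \mathcal{P}(\phi)$, hence $K_s$ is $\sigma$-semistable). Then $v_1(E) = v_1(K) + v_1(Q)$, and for the residual term $v_1(Q)$ I would note that $Q$ itself, being $S$-torsion in the relevant direction with $Z_S(Q)=0$, has all its numerical data pushed forward from a proper closed subscheme, and choosing $D$ generically avoids its support entirely, so $Q|_D = 0$ and $v_1(Q)=0$. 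When $c(E)=0$ the argument is even simpler: $E$ is itself $S$-torsion-type (by Lemma \ref{easy lemma} its linear term degenerates), and again $E|_D$ can be taken $0$ for general $D$, giving $v_1(E)=0$, which is trivially a semistable vector (the class of the zero object).

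The main obstacle will be the case analysis around the torsion object $Q$ and more generally the objects with $c=0$: one has to verify carefully that "$Z_S(Q)=0$ together with $Q\in\mathcal P_S(<\phi)$" really forces $Q$ to be supported over a proper closed subset of $S$, so that a generic hyperplane section kills it, rather than merely forcing $v_1(Q)\in\ker g$. The key point is that $a(Q)=c(Q)=0$ means $L_Q(n)$ is constant, which by the inductive structure of the polynomial (the relation $L_Q(n)-L_Q(n-1) = L_{Q|_H}(n)$ from Theorem \ref{glabal weak stablity condition}) forces $Q|_H$ to have zero class for general $H$, i.e. $Q$ is set-theoretically supported on finitely many fibers; then pick $D\in|\mathcal O(1)|$ missing those fibers. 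Once that is in place, combining with Proposition \ref{fiberwise semistable} on the $t$-flat part $K$ finishes the proof: $v_1(E) = v(K_s)$ with $K_s$ a $\sigma$-semistable object of $\mathcal{A}$, which is exactly the assertion. I would also remark that, alternatively, one can bypass $Q$ by directly invoking Proposition \ref{fiberwise semistable} in the contrapositive together with the definition of $v_1$ via $E|_D$, but the semistable-reduction route is the one that dovetails with the already-established Lemma \ref{refinement}.
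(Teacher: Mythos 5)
Your main line of argument is the paper's: strip the $S$-torsion part, run Proposition \ref{semistable reduction} to get $0\to K\to E\to Q\to 0$, observe that $Z_S(Q)=0$ forces $Q$ to be $S$-torsion so that a general $D\in|\mathcal{O}(1)|$ misses its support and $v_1(Q)=0$, and then apply Proposition \ref{fiberwise semistable} to the t-flat, $\mu_2$-semistable object $K$ to identify $v_1(E)=v_1(K)$ with the class of a semistable object. That part is sound; the cleanest way to see that $Q$ is $S$-torsion is Corollary \ref{Independence corollary}(b) together with Lemma \ref{constant central charge} applied to the torsion-free quotient of $Q$, rather than re-running the polynomial induction, but your sketch amounts to the same thing.

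There are two corrections, one substantive. Your treatment of the case $c(E)=0$ is wrong: Lemma \ref{easy lemma}(ii) gives $a(E)\leq 0$ when $c(E)=0$, not $a(E)=0$, so the linear term of $L_E(n)$ need not vanish and $E$ need not be $S$-torsion. For instance $E=p^*F$ with $F\in\mathcal{A}$ semistable of phase $1$ (e.g.\ a skyscraper sheaf when $\mathcal{A}=Coh(X)$ and $Z=-\deg+i\cdot rk$) has $c(E)=0$ and $a(E)<0$, is automatically $\mu_1$-semistable, and has $v_1(E)=\deg(\mathcal{O}(1))\cdot v(F)\neq 0$; your claim that $E|_D=0$ for general $D$ fails because $E$ is supported on a subvariety dominating $S$. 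The correct dichotomy is $Z_S(E)=0$ versus $Z_S(E)\neq 0$: Proposition \ref{semistable reduction} only requires $Z_S(E)\neq 0$, so it also covers $c(E)=0$, $a(E)<0$, and your main route goes through verbatim there; only when $Z_S(E)=0$ is $E$ itself torsion with $v_1(E)=0$. The minor point: since $D$ consists of $\deg(\mathcal{O}(1))$ points, one has $v_1(K)=\sum_{s\in D}v(K_s)=v\bigl(\bigoplus_{s\in D}K_s\bigr)$ rather than $v(K_s)$ for a single $s$; as all $K_s$ lie in $\mathcal{P}(\phi)$ by Proposition \ref{fiberwise semistable}, the direct sum is again semistable, so the conclusion stands.
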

	\begin{proof}
		We can take the short exact sequence $$0\rightarrow F\rightarrow E\rightarrow \bar{E}\rightarrow 0$$ where $F$ is the maximal torsion subobject of $E$, $\bar{E}$ is torsion free hence t-flat. It is easy to see that $v_1(E)=v_1(\bar{E})$. Therefore, we can assume $E$ is t-flat. 
		
		Now we consider the short exact sequence in Proposition \ref{semistable reduction} $$0\rightarrow K\rightarrow E\rightarrow Q\rightarrow 0.$$ Here $Q$ is torsion and $K$ is torsion free since $E$ is torsion free, hence $K$ is t-flat and semi-stable with respect to $\mu_2$. By Proposition \ref{fiberwise semistable}, $K$ is fiberwisely semi-stable. Therefore, it is easy to see $v_1(E)=v_1(K)$ is a semi-stable vector.

	\end{proof}
	\begin{lemma}\label{basic support}
		If $E\in\mathcal{A}_S$ is semi-stable with respect to the weak stability condition $\sigma_t=(\mathcal{A}_S,Z_t)$ for a fixed $t\in\mathbb{Q}_{>0}$, then $$b(E)c(E)-a(E)d(E)+\eta Q(v_1(E))\geq 0$$ for $0\leq \eta\leq \frac{t}{C^2}$.
	\end{lemma}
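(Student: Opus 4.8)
The plan is to exploit that $b(E)c(E)-a(E)d(E)+\eta\,Q(v_1(E))$ is affine in $\eta$, so it suffices to verify the inequality at $\eta=0$ and at $\eta=t/C^2$. Before that I would dispose of two easy cases. If $E$ is already $\mu_1$-semistable, then $b(E)c(E)-a(E)d(E)\ge 0$ by Lemma~\ref{refinement}, while $v_1(E)$ is a semistable vector by Lemma~\ref{quadratic form}, hence $Q(v_1(E))\ge 0$ by the support property of $\sigma$; so the claim holds for all $\eta\ge 0$. If $c(E)=0$, then $a(E)\le 0$ and $d(E)\ge 0$ by Lemma~\ref{easy lemma}, so $b(E)c(E)-a(E)d(E)=-a(E)d(E)\ge 0$; and passing to the $S$-torsion-free (hence $t$-flat) part $\bar E$, one has $v_1(E)=v_1(\bar E)$, which is the class of $\bigoplus_{s\in D}\bar E_s$ for a general $D\in|\mathcal O(1)|$, each $\bar E_s$ being zero or semistable of phase $1$ by Lemma~\ref{constant central charge}; thus $v_1(E)$ is a semistable vector and $Q(v_1(E))\ge 0$ again. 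So from now on assume $c(E)>0$ and $E$ not $\mu_1$-semistable.

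Next I would take the $\mu_1$-Harder--Narasimhan filtration $0=E_0\subset\cdots\subset E_l=E$ with factors $Q_k$ and $l\ge 2$, noting that $\sigma_t$-semistability forces $c(Q_k)>0$ for all $k$ (otherwise $c(E_1)=0$ makes $\nu_t(E_1)=+\infty>\nu_t(E)$, and $c(Q_k)=0$ for $k>1$ is incompatible with $\mu_1(Q_k)<\mu_1(Q_1)<+\infty$). Write $x_k:=a(Q_k)/c(Q_k)$, so $x_1<\cdots<x_l$. For $\eta=0$ the argument is exactly the one in Proposition~\ref{the construction}: from $b(Q_k)c(Q_k)\ge a(Q_k)d(Q_k)$ one gets $b(E)c(E)-a(E)d(E)\ge\sum_k x_k\gamma_k$ with $\gamma_k=c(E)d(Q_k)-c(Q_k)d(E)$ summing to $0$; Abel summation converts this to $\sum_{j=1}^{l-1}(x_j-x_{j+1})\Gamma_j$ with $\Gamma_j=c(E)d(E_j)-c(E_j)d(E)$; and $\sigma_t$-semistability, $\nu_t(E_j)\le\nu_t(E)$, gives $\Gamma_j\le t\bigl(c(E)a(E_j)-c(E_j)a(E)\bigr)=t\sum_{k\le j<m}c(Q_k)c(Q_m)(x_k-x_m)$. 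Multiplying by $x_j-x_{j+1}<0$ and telescoping $\sum_{j:\,k\le j<m}(x_{j+1}-x_j)=x_m-x_k$ yields
\[
b(E)c(E)-a(E)d(E)\ \ge\ t\sum_{j<k}c(Q_j)c(Q_k)(x_j-x_k)^2 .
\]

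For the endpoint $\eta=t/C^2$ I would feed in Lemma~\ref{quadratic form}: each $v_1(Q_k)$ is a semistable vector, so $\|v_1(Q_k)\|^2\le C^2|z_k|^2$ for $z_k:=a(Q_k)+i\,c(Q_k)=c(Q_k)(x_k+i)$. Since $v_1$ is additive, $v_1(E)=\sum_k v_1(Q_k)$ and $g(v_1(E))=\sum_k z_k$, so using $Q(w)=C^2|g(w)|^2-\|w\|^2$ together with $\|v_1(E)\|\le\sum_k\|v_1(Q_k)\|\le C\sum_k|z_k|$,
\[
\tfrac{t}{C^2}\,Q(v_1(E))\ \ge\ t\Bigl(\bigl|\textstyle\sum_k z_k\bigr|^2-\bigl(\textstyle\sum_k|z_k|\bigr)^2\Bigr)\ =\ -2t\sum_{j<k}c(Q_j)c(Q_k)\Bigl(\sqrt{(1+x_j^2)(1+x_k^2)}-1-x_jx_k\Bigr).
\]
Adding the two displays reduces the whole statement to the termwise elementary inequality $(x-y)^2\ge 2\bigl(\sqrt{(1+x^2)(1+y^2)}-1-xy\bigr)$; using $(1+x^2)(1+y^2)=(1+xy)^2+(x-y)^2$ and $x^2+y^2+2>0$ one may square, and what is left is $(x-y)^2+4(1+xy)-4=(x+y)^2\ge 0$.

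The one genuinely non-obvious point I expect is that the crude triangle-inequality estimate $\|v_1(E)\|\le\sum_k\|v_1(Q_k)\|$ over the $\mu_1$-Harder--Narasimhan factors is already exactly sharp enough: the angular-defect term $\sqrt{(1+x_j^2)(1+x_k^2)}-1-x_jx_k$ it produces is dominated, factor of $2$ and all, by the quadratic lower bound $t\sum c(Q_j)c(Q_k)(x_j-x_k)^2$ that $\sigma_t$-semistability yields, so that no finer Lorentzian-type comparison of the classes $v_1(Q_k)$ is needed. The remaining work is purely organizational — keeping the two Abel summations consistent, and using that $t/C^2$ is precisely the value of $\eta$ for which $\eta C^2=t$, which is what makes the last estimate close.
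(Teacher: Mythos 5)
Your proof is correct and follows essentially the same route as the paper's: after disposing of the $c(E)=0$ case, the $\mu_1$-Harder--Narasimhan filtration plus Abel summation and $\sigma_t$-semistability yield the lower bound $t\sum_{j<k}c(Q_j)c(Q_k)(x_j-x_k)^2$ for $b(E)c(E)-a(E)d(E)$, and the support property applied to the semistable vectors $v_1(Q_k)$ controls $Q(v_1(E))$ pairwise, reducing everything to the same elementary two-variable inequality $(x-y)^2\ge 2\bigl(\sqrt{(1+x^2)(1+y^2)}-1-xy\bigr)$. The only cosmetic difference is that you bound $\|v_1(E)\|\le\sum_k\|v_1(Q_k)\|$ globally by the triangle inequality where the paper expands $Q(v_1(E))$ into cross-terms $Q(w_i,w_j)$ and applies Cauchy--Schwarz to each pair; these amount to the identical estimate.
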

	\begin{proof}
		If $c(E)=0$, then $b(E)c(E)-a(E)d(E)\geq 0$ by Lemma \ref{easy lemma} and $Q(v_1(E))\geq 0$ by last lemma. Therefore, the statement is true in this case.
		
	We assume $c(E)>0$,  take the HN filtration $$0=E_0\subset E_1 \subset E_2\subset \cdots \subset E_{l-1} \subset E_l=E,$$ and use $Q_k$ to denote $E_{k}/E_{k-1}$ for $1\leq k\leq l$. Then we have the same inequalities (1) and (2) as in the proof of  Theorem \ref{the construction}, and we have the following inequalities
		
		\begin{equation}
			\frac{\Sigma_{k=1}^j (-a(Q_k)t+d(Q_k))}{\Sigma_{k=1}^jc(Q_k)t}\leq \frac{-a(E)t+d(E)}{c(E)t}\leq \frac{\Sigma_{k=j}^l(-a(Q_k)t+d(Q_k))}{\Sigma_{k=j}^lc(Q_k)t}.
		\end{equation}
		We have $c(Q_k)>0$ by same reason in the proof of Theorem \ref{the construction}. Similarly we get
		\begin{equation*}
			\begin{split}
				b(E) &\geq \Sigma_{k=1}^l \frac{a(Q_k)d(Q_k)}{c(Q_k)} \\ & =\frac{a(Q_l)}{c(Q_l)}d(F)-\Sigma_{j=1}^{l-1}\Sigma_{k=1}^jd(Q_k)(\frac{a(Q_{j+1})}{c(Q_{j+1})}-\frac{a(Q_j)}{c(Q_j)} )\\ & \geq \frac{a(Q_l)}{c(Q_l)}d(F)-\Sigma_{j=1}^{l-1}(\Sigma_{k=1}^j\frac{-a(E)t+d(E)}{c(E)}c(Q_k)+\Sigma_{k=1}^ja(Q_k)t)(\frac{a(Q_{j+1})}{c(Q_{j+1})}-\frac{a(Q_j)}{c(Q_j)} )\\ &  =\Sigma_{k=1}^l\frac{a(Q_k)}{c(Q_k)}(\frac{-a(E)t+d(E)}{c(E)}c(Q_k)+a(Q_k)t) \\& =a(E)\frac{-a(E)t+d(E)}{c(E)}+\Sigma_{k=1}^l\frac{a(Q_k)^2t}{c(Q_k)}.
			\end{split}
		\end{equation*}
		Therefore, we have 
		\begin{equation*}
			\begin{split}
				b(E)c(E)-a(E)d(E) &\geq \Sigma_{k=1}^lc(Q_k)\Sigma_{k=1}^l\frac{a(Q_k)^2t}{c(Q_k)}-a(E)^2t\\ & =t\Sigma_{1\leq i<j\leq l}(\frac{a(Q_i)}{\sqrt{c(Q_i)}}\sqrt{c(Q_j)}-\frac{a(Q_j)}{\sqrt{c(Q_j)}}\sqrt{c(Q_i)})^2.
			\end{split}	
		\end{equation*}
		
		On the other hand, let $w_i\coloneqq v_1(Q_i)$. Then $$Q(v_1(E))=Q(\Sigma_{i=1}^lw_i)=\Sigma_{i=1}^lQ(w_i)+2\Sigma_{1\leq i<j\leq l}Q(w_i,w_j).$$ By Lemma \ref{quadratic form}, we know that $Q(w_i)\geq 0$. Therefore, it suffices to prove that $$t(\frac{a(Q_i)}{\sqrt{c(Q_i)}}\sqrt{c(Q_j)}-\frac{a(Q_j)}{\sqrt{c(Q_j)}}\sqrt{c(Q_i)})^2+2\eta Q(w_i,w_j)\geq 0$$ for $0\leq \eta\leq \frac{t}{C^2}$ and arbitrary $1\leq i<j\leq l$.

		We have 
		\begin{equation*}
			\begin{split}
				2Q(w_i,w_j) &=Q(w_i+w_j)-Q(w_i)-Q(w_j) \\ & =C^2|Z(Q_i)+Z(Q_j)|^2-\|w_i+w_j\|^2-C^2|Z(Q_i)|^2+\|w_i\|^2-C^2|Z(Q_j)|^2+\|w_j\|^2\\ & =C^2((a(Q_i)+a(Q_j))^2+(c(Q_i)+c(Q_j))^2)-a(Q_i)^2-c(Q_i)^2-a(Q_j)^2-c(Q_j)^2) \\&-\|w_i+w_j\|^2+\|w_i\|^2+\|w_j\|^2 \\ & \geq 2C^2(a(Q_i)a(Q_j)+c(Q_i)c(Q_j))-2\|w_i\|\|w_j\| \\ &\geq 2C^2(a(Q_i)a(Q_j)+c(Q_i)c(Q_j)-\sqrt{a(Q_i)^2+c(Q_i)^2}\sqrt{a(Q_j)^2+c(Q_j)^2}).
			\end{split}	
		\end{equation*}
		The first inequality is from Cauchy-Schwarz inequality. The second inequality comes from the definition of support property. Hence, it suffices to prove that 
		\begin{equation*}
			\begin{split}
				t(\frac{a(Q_i)}{\sqrt{c(Q_i)}}\sqrt{c(Q_j)}&- \frac{a(Q_j)}{\sqrt{c(Q_j)}}\sqrt{c(Q_i)})^2\\ &\geq 
				2\eta C^2(\sqrt{a(Q_i)^2+c(Q_i)^2}\sqrt{a(Q_j)^2+c(Q_j)^2}-a(Q_i)a(Q_j)-c(Q_i)c(Q_j)).
			\end{split}	
		\end{equation*}
		By Cauchy's inequality, the right hand side is nonnegative. Hence, it is enough to prove the inequality for $\eta=\frac{t}{C^2}$. In this case, the inequality is equivalent to 
		\begin{equation*}
			\begin{split}
				\frac{a(Q_i)^2}{c(Q_i)}c(Q_j)&+\frac{a(Q_j)^2}{c(Q_j)}c(Q_i)+2c(Q_i)c(Q_j)\\ &\geq 
				2\sqrt{a(Q_i)^2+c(Q_i)^2}\sqrt{a(Q_j)^2+c(Q_j)^2}
			\end{split}	
		\end{equation*}
		which becomes trivial if we divide both sides by $c(Q_i)c(Q_j)$. Therefore, the lemma is proved.
	\end{proof}

	Now, we consider the stability conditions $\sigma_{s,t}=(\mathcal{A}_S^t,Z_S^{s,t})$.
	\begin{lemma}\label{middle support}
		If $t$ is a fixed positive rational number and $E\in \mathcal{A}_S^t$ is semi-stable with respect to $\sigma_{s,t}=(\mathcal{A}_S^t,Z_S^{s,t})$ for all $s$ sufficiently large. Then $b(E)c(E)-a(E)d(E)+\eta Q(v_1(E))\geq 0$ for $0\leq \eta\leq \frac{t}{C^2}.$
	\end{lemma}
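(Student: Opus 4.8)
The plan is to run a large-volume-limit argument that reduces Lemma~\ref{middle support} to Lemma~\ref{basic support}. The central charge $Z_S^{s,t}(E)=c(E)s+b(E)+i(-a(E)t+d(E))$ depends on $s$ only through its real part, and the linear-in-$s$ coefficient of the slope $\mu_{\sigma_{s,t}}$ is governed by $c(E)/(-a(E)t+d(E))=1/(t\,\nu_t(E))$. So as $s\to+\infty$ the stability $\sigma_{s,t}$ degenerates to the weak stability $\sigma_t=(\mathcal{A}_S,Z_t)$, refined by a secondary (lexicographic) comparison of the $b$-terms, and it is precisely this degeneration that lets us feed the pieces of $E$ into Lemma~\ref{basic support}.

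First I would decompose $E$ along the torsion pair $(\mathcal{T},\mathcal{F})$ used to build $\mathcal{A}_S^t$: there is a short exact sequence $0\to \mathcal{H}^{-1}_{\mathcal{A}_S}(E)[1]\to E\to \mathcal{H}^0_{\mathcal{A}_S}(E)\to 0$ in $\mathcal{A}_S^t$ with $\mathcal{H}^{-1}_{\mathcal{A}_S}(E)\in\mathcal{F}$ and $\mathcal{H}^0_{\mathcal{A}_S}(E)\in\mathcal{T}$. The degenerate strata, where $c=0$ or $-at+d=0$, are disposed of directly using Lemma~\ref{easy lemma} together with the fact that $Z$ is a genuine stability function on $\mathcal{A}$ (for instance, an object with $a=c=d=0$ automatically has $v_1=0$, so its contribution to the inequality vanishes); the example after Proposition~\ref{semistable reduction} is the model case.

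The key step is to show that the hypothesis ``$E$ is $\sigma_{s,t}$-semistable for all $s$ sufficiently large'' forces $\mathcal{H}^{-1}_{\mathcal{A}_S}(E)$ and $\mathcal{H}^0_{\mathcal{A}_S}(E)$ to be $\nu_t$-semistable in $\mathcal{A}_S$, equivalently $\sigma_t$-semistable, and pins down the ordering of their $\nu_t$-slopes. I would argue by contradiction: a $\nu_t$-destabilizing subobject of $\mathcal{H}^0_{\mathcal{A}_S}(E)$ (or quotient of $\mathcal{H}^{-1}_{\mathcal{A}_S}(E)$) is, in $\mathcal{A}_S^t$, a sub- or quotient object of $E$ whose $\sigma_{s,t}$-slope eventually beats that of $E$, because of the $1/(t\,\nu_t)$ formula above; one has to treat the $c=0$ and $-at+d=0$ subcases with a little care, but these are again controlled by Lemma~\ref{easy lemma}. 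This is the step I expect to be the main obstacle: converting ``for all $s\gg 0$'' into honest $\nu_t$-semistability of the cohomology pieces with the correct slope inequalities, while keeping the degenerate strata under control throughout.

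Once both cohomology objects are $\sigma_t$-semistable, Lemma~\ref{basic support} yields $b(\mathcal{H}^{i})c(\mathcal{H}^{i})-a(\mathcal{H}^{i})d(\mathcal{H}^{i})+\eta\,Q(v_1(\mathcal{H}^{i}))\ge 0$ for $0\le\eta\le t/C^2$ and $i=-1,0$. It then remains to add these using additivity of $a,b,c,d$ and $v_1$ along the triangle, and to bound the resulting cross term
\[
b(\mathcal{H}^{0})c(\mathcal{H}^{-1})+b(\mathcal{H}^{-1})c(\mathcal{H}^{0})-a(\mathcal{H}^{0})d(\mathcal{H}^{-1})-a(\mathcal{H}^{-1})d(\mathcal{H}^{0})+2\eta\,Q\!\left(v_1(\mathcal{H}^{0}),v_1(\mathcal{H}^{-1})\right)\ge 0 .
\]
This is handled exactly as in the proof of Lemma~\ref{basic support}: write $Q(w)=C^2|Z(w)|^2-\|w\|^2$, apply the Cauchy--Schwarz inequality, and reduce to an elementary inequality in the $a(\mathcal{H}^{i}),c(\mathcal{H}^{i})$, which is where the slope ordering from the key step and the bound $\eta\le t/C^2$ get used. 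Everything after the key step is thus the same kind of Cauchy--Schwarz bookkeeping already present in Lemma~\ref{basic support}.
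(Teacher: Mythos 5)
Your setup---splitting $E$ along the torsion pair as $0\to F[1]\to E\to T\to 0$ and aiming to reduce everything to Lemma \ref{basic support}---is the same as the paper's, but the endgame you propose does not work, and it misses the one fact that actually makes the proof close. You plan to show that both cohomology pieces are $\sigma_t$-semistable, apply Lemma \ref{basic support} to each, and then absorb the cross term by the same Cauchy--Schwarz mechanism as in Lemma \ref{basic support}. The cross term here is not of that type: since $[E]=[T]-[F]$, expanding $b(E)c(E)-a(E)d(E)$ produces $-b(T)c(F)-b(F)c(T)+a(T)d(F)+a(F)d(T)$, which involves $b$ and $d$. The Cauchy--Schwarz argument in Lemma \ref{basic support} only controls expressions in $a$ and $c$, via $|Z|^2=a^2+c^2$ and $Q(w)=C^2|Z(w)|^2-\|w\|^2$; neither $Q(v_1(T),v_1(F))$ nor any ordering of $\nu_t$-slopes gives a handle on a term like $b(T)c(F)$, which a priori can be arbitrarily negative when both pieces are numerically nontrivial. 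So the ``elementary inequality in the $a,c$'' you expect to land on does not exist, and this step is a genuine gap.

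What closes the argument in the paper is that the hypothesis ``semistable for all $s\gg 0$'' forces a dichotomy, not merely semistability of the two pieces. Comparing the $\sigma_{s,t}$-slope of the subobject $F[1]$ with that of $E$ as $s\to+\infty$: the slope of $F[1]$ grows like $c(F)s/(a(F)t-d(F))$ while passing to $E$ only decreases the numerator (by $c(T)\geq 0$) and increases the denominator (by $-a(T)t+d(T)\geq 0$), so the required inequality can hold for all large $s$ only if either $F=0$ (and then $E=T$ is $\sigma_t$-semistable and Lemma \ref{basic support} applies directly), or $c(T)=0$ and $-a(T)t+d(T)=0$, which by Lemma \ref{easy lemma} forces $a(T)=c(T)=d(T)=0$, hence $T$ torsion, $v_1(T)=0$ and $b(T)<0$. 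In that second case the entire cross term collapses to $-b(T)c(F)\geq 0$ and $Q(v_1(E))=Q(v_1(F))$, so the inequality is inherited from Lemma \ref{basic support} applied to $F$ alone. You gesture at this when you observe that $a=c=d=0$ forces $v_1=0$, but you treat it as a disposable degenerate stratum rather than as the only configuration in which both pieces can be nonzero; without that reduction your final Cauchy--Schwarz step has nothing true to prove.
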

	
	\begin{proof}
		We have that $E$ sits in the following short exact sequence

		$$0\rightarrow F[1]\rightarrow E\rightarrow T\rightarrow 0$$where $F\in\mathcal{F}, T\in\mathcal{T}$. We claim that one of the following cases is true:
		
		(i) $F=0$, and $T$ is semi-stable with respect to $\sigma_t$.
		
		(ii) $c(T)=a(T)=d(T)=0$, and $F$ is semi-stable with respect to $\sigma_t$.
		
		The proof of the claim is essentially the same as in \cite[Lemma 8.9]{bayer2016space}. We sketch the proof for reader's convenience.
		
		If $F=0$, it is easy to see $T$ is semi-stable with respect to $\sigma_t$. Therefore, the inequality holds by Lemma \ref{basic support}.
		
		Now we can assume $F\neq 0$. Since $E$ is semi-stable with respect to $\sigma_{s,t}=(\mathcal{A}_S^t,Z_s^{s,t})$ for $s$ sufficiently large. We have 
		$$\frac{c(F)s+b(F)}{a(F)t-d(F)}\leq \frac{(c(F)-c(T))s+b(F)-b(T)}{(-a(T)+a(F))t+d(T)-d(F)}$$
		for $s$ sufficiently large. 
		
		But we have $$(-a(T)+a(F))t+d(T)-d(F)\geq a(F)t-d(F)\geq 0,$$ and $c(F)>0,\ c(T)\geq 0$. As a consequence, the inequality can hold if only if $c(T)=a(T)=d(T)=0$, which implies that $T$ is a torsion object, hence $v_1(T)=0$. In this case, it is easy to see $F$ is semi-stable with respect to $\sigma_t$.
		
		Then \begin{equation*}
			\begin{split}
				b(E)c(E)-a(E)d(E)&=(b(T)-b(F))(-c(F))-(-a(F))(-d(F))\\&=-b(T)c(F)+b(F)c(F)-a(F)d(F)\\&\geq b(F)c(F)-a(F)d(F)
			\end{split}
		\end{equation*} and $Q(v_1(E))=Q(v_1(F))$ because $v_1(T)=0$. Hence, the inequality follows from Lemma \ref{basic support}.
	\end{proof}
	
	\begin{lemma}\label{support property}
		If $E\in \mathcal{A}_S^t$ is semi-stable with respect to $\sigma_{s,t}=(\mathcal{A}_S^t,Z_S^{s,t})$ for fixed $s,t\in\mathbb{Q}_{>0}$. Then $$b(E)c(E)-a(E)d(E)+\eta Q(v_1(E))\geq 0$$ for any $0\leq \eta\leq \frac{min\{s,t\}}{C^2}$.
	\end{lemma}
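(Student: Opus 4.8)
The plan is to descend from Lemma~\ref{middle support} by a wall-crossing argument along the ray $\{\sigma_{s',t}\ :\ s'\ge s\}$ with $t$ held fixed, organised as an induction on the quantity $Im\bigl(Z_S^{s,t}(E)\bigr)=-a(E)t+d(E)$. Since $\sigma$ is rational, the image of $a,d$ is rational and, for fixed $t\in\mathbb{Q}_{>0}$, the values of $-a(\cdot)t+d(\cdot)$ lie in a discrete subgroup of $\mathbb{Q}$; so this induction is well founded on the nonnegative values it takes on $\mathcal A_S^t$.

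First I would dispose of two easy cases. If $E$ is $\sigma_{s',t}$-semistable for \emph{every} $s'\ge s$, then Lemma~\ref{middle support} gives $b(E)c(E)-a(E)d(E)+\eta\,Q(v_1(E))\ge 0$ for $0\le\eta\le t/C^2$, which contains the asserted range because $\min\{s,t\}\le t$. If $Im(Z_S^{s,t}(E))=0$, I would argue directly from the defining sequence $0\to F[1]\to E\to T\to 0$ with $F\in\mathcal F$, $T\in\mathcal T$: as in the proof of Proposition~\ref{the construction}, vanishing of the imaginary part forces $a(T)=c(T)=d(T)=0$, hence $v_1(T)=0$ and $b(T)<0$ by Lemma~\ref{easy lemma}, and forces $F$ to be $\sigma_t$-semistable of slope $0$; then $b(E)c(E)-a(E)d(E)\ge b(F)c(F)-a(F)d(F)$ and $Q(v_1(E))=Q(v_1(F))$, and the inequality follows from Lemma~\ref{basic support} applied to $F$.

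The main case is $Im(Z_S^{s,t}(E))>0$ with $E$ not $\sigma_{s',t}$-semistable for some $s'\ge s$. Exactly as in \cite[Section 2.3]{piyaratne2019moduli}, I would first check that the walls for $E$ on this ray are locally finite: any $\sigma_{s',t}$-subobject $G\subset E$ in $\mathcal A_S^t$ has $-a(G)t+d(G)$ lying in a discrete set and pinned in $[0,-a(E)t+d(E)]$, each numerical wall is a single value of $s'$, and one bounds the finitely many relevant classes $v_1(G),v_2(G)$. Hence there is a smallest $s_1\in[s,+\infty)$ lying on a wall, at which $E$ is strictly $\sigma_{s_1,t}$-semistable. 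Writing $E$ as an iterated extension of $\sigma_{s_1,t}$-stable objects $E_1,\dots,E_m$ ($m\ge 2$) of the common phase of $E$, one has $Z_S^{s_1,t}(E_i)=\lambda_i\,Z_S^{s_1,t}(E)$ with $\lambda_i>0$, $\sum\lambda_i=1$; since the imaginary part of $Z_S^{s,t}$ does not depend on $s$, $0<Im(Z_S^{s_1,t}(E_i))=\lambda_i\,Im(Z_S^{s,t}(E))<Im(Z_S^{s,t}(E))$. Thus the inductive hypothesis applies to each $E_i$ at the parameter $s_1$ and yields $b(E_i)c(E_i)-a(E_i)d(E_i)+\eta\,Q(v_1(E_i))\ge 0$ for $0\le\eta\le\min\{s_1,t\}/C^2$, in particular for all $\eta$ in the target range since $s_1\ge s$.

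It remains to control the cross terms. Expanding $b(E)c(E)-a(E)d(E)+\eta\,Q(v_1(E))$ over the filtration, using additivity of $a,b,c,d,v_1$ and bilinearity of $Q$, one gets the sum of the nonnegative diagonal contributions above plus, for each $i<j$, the term
$$\bigl(b(E_i)c(E_j)+b(E_j)c(E_i)-a(E_i)d(E_j)-a(E_j)d(E_i)\bigr)+2\eta\,Q\bigl(v_1(E_i),v_1(E_j)\bigr),$$
which I would show is $\ge 0$ for $0\le\eta\le\min\{s_1,t\}/C^2$. Here the proportionality $Z_S^{s_1,t}(E_i)\parallel Z_S^{s_1,t}(E_j)$ lets one eliminate $b(E_i),d(E_i)$ in favour of $a(E_i),c(E_i)$ and the common phase, turning the first bracket into a quadratic in the $a$'s and $c$'s carrying the parameters $s_1$ and $t$, while $2Q(v_1(E_i),v_1(E_j))$ is bounded below using $Q(w)=C^2|Z(w)|^2-\|w\|^2$, the support property of $\sigma$, and Cauchy--Schwarz, just as in the proof of Lemma~\ref{basic support}. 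The $\eta$-bound surviving this computation is $\min\{s_1,t\}/C^2$, which in the worst case $s_1=s$ (that is, when $E$ is already strictly semistable at the given $s$) is exactly $\min\{s,t\}/C^2$; the need to keep $s_1$ and $t$ separate here — in contrast with Lemma~\ref{basic support}, where only $t$ appears — is precisely why $\min\{s,t\}$ replaces $t$ in the statement. I expect this cross-term estimate to be the crux: to invoke the support property of $\sigma$ one needs $\|v_1(E_i)\|\le C\,|Z(v_1(E_i))|$, i.e.\ $v_1(E_i)$ to be a semistable vector, which should come either from refining the filtration down to $\mu_1$-semistable pieces, where Lemma~\ref{quadratic form} applies, or from folding this statement into the same induction; establishing the local finiteness of the walls is the other point requiring care.
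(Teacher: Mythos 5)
Your overall skeleton---induction on the discrete quantity $-a(E)t+d(E)$, reduction to Lemma \ref{middle support} when $E$ stays semistable for all large $s$, and wall-crossing to a parameter $s_1$ where $E$ becomes strictly semistable so that its Jordan--H\"older factors have strictly smaller imaginary part---is the same as the paper's. The gap is in how you recombine the factors at the wall. You propose an explicit cross-term estimate, and, as you yourself flag, bounding $2\eta\,Q\bigl(v_1(E_i),v_1(E_j)\bigr)$ from below via Cauchy--Schwarz requires $\|v_1(E_i)\|\leq C\,|g(v_1(E_i))|$, i.e.\ that $v_1(E_i)$ be (a sum of aligned) semistable vectors. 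Lemma \ref{quadratic form} provides this only for objects of $\mathcal{A}_S$ that are semi-stable with respect to $\mu_1$, not for $\sigma_{s_1,t}$-stable objects of the tilted heart $\mathcal{A}_S^t$, which are in general extensions of a torsion part by a shift $F[1]$ and need not be $\mu_1$-semistable. Neither of your proposed repairs closes this: refining the filtration down to $\mu_1$-semistable pieces destroys the proportionality $Z_S^{s_1,t}(E_i)\parallel Z_S^{s_1,t}(E_j)$ that you use to eliminate $b$ and $d$ from the first bracket, and ``folding it into the induction'' is unavailable because the statement being proved says nothing about $\|v_1(E)\|$. Moreover, even granting the norm bound, substituting $b(E_i)=\lambda_i\,Re(Z_S^{s_1,t}(E))-s_1c(E_i)$ and $d(E_i)=\lambda_i\,Im(Z_S^{s_1,t}(E))+ta(E_i)$ leaves terms linear in the common central charge in your bracket, so the computation is not the one from Lemma \ref{basic support} and its nonnegativity is not established.

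The missing ingredient is the step the paper puts first: verify that $bc-ad+\eta Q$ is negative semi-definite on $\ker Z_S^{s,t}$, namely
$$b(E)c(E)-a(E)d(E)+\eta Q(v_1(E))=-sc(E)^2-ta(E)^2+\eta Q(v_1(E))\leq -sc(E)^2-ta(E)^2+\eta C^2(a(E)^2+c(E)^2)\leq 0$$
precisely when $\eta\leq \min\{s,t\}/C^2$; this, not the cross-term estimate at the worst wall $s_1=s$, is where $\min\{s,t\}$ actually enters. With negative semi-definiteness on the kernel in hand, \cite[Lemma A.6]{bayer2016space} (convexity of $\{q\geq 0\}$ among classes whose central charge lies on a fixed ray) recombines the Jordan--H\"older factors with no information about the individual $v_1(E_i)$ at all, which is how the paper avoids your cross-term problem entirely. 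A secondary difference: the paper does not invoke local finiteness of walls but constructs the critical parameter by hand (a finite set $W$ of destabilizing charges, a maximizer $F_w$ for each $w\in W$, and $s'=\min_{w}s_w$); your appeal to local finiteness would need its own proof, but that part is repairable---the recombination step is the genuine gap.
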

	
	\begin{proof}
		The idea of this proof is essentially from \cite[Lemma 8.8]{bayer2016space}.
		We first prove that the kernel of $Z_S^{s,t}$ is negative semi-definite with respect to $$bc-ad+\eta Q$$ for $0\leq \eta\leq \frac{min\{s,t\}}{C^2}$.
		
		If $Z_S^{s,t}(E)=0$, we have \begin{equation*}
			\begin{split}
				b(E)c(E)-a(E)d(E)+\eta Q(v_1(E))&=-sc(E)^2-ta(E)^2+\eta Q(v_1(E))\\ &\leq -sc(E)^2-ta(E)^2+\eta C^2(a(E)^2+c(E)^2)\leq 0
			\end{split}
		\end{equation*}
		for $0\leq \eta\leq \frac{min\{s,t\}}{C^2}$.

		Since the image of $-at+d$ is discrete, we can prove the lemma by induction on $-a(E)t+d(E)$. If $-a(E)t+d(E)=0$ or $-a(E)t+d(E)$ is minimal in the image of imaginary part, then it is easy to see $E$ is semi-stable for $m$ sufficiently large. Therefore, the inequality holds by Lemma \ref{middle support}. 
		Now we assume it is true for objects whose imaginary part is less than $N_0>0$. \
		
		Let $t$ be a positive rational number. Suppose $E$ is semi-stable with respect to $\sigma_{s_0,t}$ for a positive rational number $s_0$ and $-a(E)t+d(E)=N_0$. By \cite[Lemma A.6]{bayer2016space}, we can assume $E$ is stable. If $E$ remains semi-stable with respect to $\sigma_{s,,t}$, for all $s>s_0$, then this follows from Lemma \ref{middle support}. 
		
		Otherwise, suppose $E$ is unstable with respect to $\sigma_{s_1,t}$ for a positive rational number $s_1$ bigger than $s_0$. Let $$W\coloneqq\{Z_S^{s_1,t}(F)|0\neq F\subset E\ and\  \mu_{s_1,t}(F)>\mu_{s_1,t}(E)\}$$ where $\mu_{s_1,t}$ is the associated slope function of $\sigma_{s_1,t}$. Then by \cite[Lemma 4.9]{macri2017lectures} and discreteness of $Z_S^{s_1,t}$, we know that $W$ is a finite subset in $\mathbb{C}$. 
		
		For any element $w\in W$, we set $$\mathcal{M}_w\coloneqq\{F\mid F\subset E\ and\ Z^{s_1,t}_S(F)=w\}.$$ 
		
		Then since $Z_S^{s_0,t}$ is discrete, we can find $F_w\in\mathcal{M}_w$ such that $$\mu_{s_0,t}(F_w)=\max_{F\in\mathcal{M}_w}\mu_{s_0,t}(F).$$
		
		Since $a,b,c,d$ are linear and rational, we can find a positive rational number $s_w$, such that $s_0<s_w<s_1$ and $$Z_S^{s_w,t}(F_w)/Z_S^{s_w,t}(E)\in\mathbb{R}_{>0}.$$ 
		
		Moreover, by the definition of $F_w$, we know that $\mu_{s_w,t}(F)\leq\mu_{s_w,t}(E)$ for all $F\in\mathcal{M}_w$. Therefore, if we take $s'=\min_{w\in W}s_w$, which is also a positive rational number and $s_0<s'<s_1$. One can easily check that $E$ is strictly semi-stable with respect to $\sigma_{s',t}$. Then by induction, all its Jordan-H\"older factor (with respect to $\sigma_{s',t}$) satisfy the inequality. Therefore, the inequality holds for $E$ by \cite[Lemma A.6]{bayer2016space}.
		
	\end{proof}
	\begin{remark}
		Sometimes we only use the case $\eta=0$, like in the following theorem. The difference is that when $\eta=0$, we only get the support property on  a rank 4 quotient lattice $\Lambda/ker(g)\oplus \Lambda/ker(g)$. While for $0< \eta<\frac{min\{s,t\}}{C^2}$, we get the stronger support property on the lattice $\Lambda\oplus \Lambda/ker(g)$.  Indeed, for $E\in ker(Z_S^{s,t})$, we have $$b(E)c(E)-a(E)d(E)=-sc(E)^2-ta(E)^2\leq 0.$$ The equality holds if and only if $$a(E)=b(E)=c(E)=d(E)=0.$$ While for $0< \eta<\frac{min\{s,t\}}{C^2}$, we have \begin{equation*}
		\begin{split}
			b(E)c(E)-a(E)d(E)+\eta Q(v_1(E))&=-sc(E)^2-ta(E)^2+\eta Q(v_1(E))\\ &\leq -sc(E)^2-ta(E)^2+\eta C^2(a(E)^2+c(E)^2)\leq 0.
		\end{split}
	\end{equation*} The equality holds if and only if $a(E)=b(E)=c(E)=d(E)=0$ and $v_1(E)=0$ since $Q|_{ker(g)}$ is negative definite and $v_1(E)\in ker(g)$.
	\end{remark}
	\begin{theorem}
		We have a map $\eta:RStab(X)\times\mathbb{R}_{> 0}\times\mathbb{R}_{> 0}\rightarrow Stab(X\times S)$, where $S$ is an integral smooth projective curve. Moreover, the stability conditions in the image satisfy the support property.
	\end{theorem}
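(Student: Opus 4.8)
The plan is to define $\eta(\sigma,s,t):=\sigma_{s,t}$ first on the dense set of rational parameters, using the results already established, and then to extend continuously over all of $\mathbb{R}_{>0}\times\mathbb{R}_{>0}$ by a deformation argument. Fix $\sigma=(\mathcal{A},Z)\in RStab(X)$. For $s,t\in\mathbb{Q}_{>0}$, Theorem \ref{map of rational stability conditions} together with Lemma \ref{support property} already gives that $\sigma_{s,t}=(\mathcal{A}_S^t,Z_S^{s,t})$ is a stability condition on $D(X\times S)$, and, taking $\eta=0$ in Lemma \ref{support property}, it satisfies the support property with respect to the quadratic form $Q_0:=bc-ad$ on the rank four lattice $\Lambda_0:=(\Lambda/\ker(g))\oplus(\Lambda/\ker(g))$, to which both $Q_0$ and $Z_S^{s,t}$ descend because $a,b,c,d$ all factor through $g$. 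Letting $s$ range over all of $\mathbb{R}_{>0}$ while $t\in\mathbb{Q}_{>0}$ is fixed costs nothing: the heart $\mathcal{A}_S^t$ is Noetherian and independent of $s$, the imaginary part $Im\,Z_S^{s,t}=-at+d$ still has discrete image since $t$ is rational and $a,d$ are rational-valued, and Proposition \ref{the construction} already allows arbitrary real $s$; hence $(\mathcal{A}_S^t,Z_S^{s,t})$ is a pre-stability condition for every $s>0$, and since it agrees with a genuine stability condition having support property $Q_0$ for $s$ in the dense subset $\mathbb{Q}_{>0}$, the deformation theorem of \cite{bridgeland2007stability} in the support-property form of \cite{bayer2016space} (the heart being unchanged) shows it satisfies the support property with respect to $Q_0$ for all $s>0$. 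This produces $\eta$ on $RStab(X)\times\mathbb{R}_{>0}\times\mathbb{Q}_{>0}$, with all values satisfying the support property with respect to $Q_0$.

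To reach arbitrary $t$ I would deform in the $t$ variable, the key point being that the single form $Q_0$ is admissible for every parameter. Indeed, if $Z_S^{s,t}(E)=0$ then $b(E)=-sc(E)$ and $d(E)=ta(E)$, so
$$Q_0(v(E))=b(E)c(E)-a(E)d(E)=-sc(E)^2-ta(E)^2\le 0,$$
with equality only if $a(E)=c(E)=0$, in which case $b(E)=d(E)=0$ too; thus $Q_0$ is negative definite on $\ker Z_S^{s,t}$ for every $(s,t)\in\mathbb{R}_{>0}^2$. The central charges $Z_S^{s,t}$ depend affine-linearly, hence continuously, on $(s,t)$, and $\mathbb{R}_{>0}^2$ is connected, simply connected, and contains the dense subset $\mathbb{R}_{>0}\times\mathbb{Q}_{>0}$ over which a lift to $Stab^{Q_0}(X\times S)$ is already available (continuously in $s$ for each fixed rational $t$). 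Along any path in $\mathbb{R}_{>0}^2$ joining a point of $\mathbb{R}_{>0}\times\mathbb{Q}_{>0}$ to a prescribed $(s,t)$, the form $Q_0$ is uniformly negative definite on $\ker Z_S^{s',t'}$ (the path being compact), so the deformation theorem applies along the path with a uniform ball size and lifts the path of central charges to a path in $Stab^{Q_0}(X\times S)$; its endpoint is a stability condition $\sigma_{s,t}$ with central charge $Z_S^{s,t}$ and the support property with respect to $Q_0$. Local uniqueness of the lift, simple connectedness of $\mathbb{R}_{>0}^2$, and the Hausdorff property of $Stab(X\times S)$ make $\sigma_{s,t}$ independent of the chosen path and the assignment $(s,t)\mapsto\sigma_{s,t}$ continuous; we set $\eta(\sigma,s,t):=\sigma_{s,t}$, the unique continuous extension of the map built over rational parameters.

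I expect the main obstacle to be exactly this passage to irrational $t$. There the weak stability function $Z_t$ on $\mathcal{A}_S$ no longer has discrete image (its real part $at-d$ ranges over the dense set $t\mathbb{Q}+\mathbb{Q}$), so the Harder--Narasimhan property underpinning the tilt $\mathcal{A}_S^t$ is not available from the explicit construction, and the heart at irrational $t$ genuinely exists only as a deformation of the rational ones; consequently the entire content of the extension rests on the deformation theorem, and the input that makes it work globally is the parameter-independent quadratic form $Q_0=bc-ad$ together with the negative-definiteness computation above. A minor additional point, handled by exhausting $\mathbb{R}_{>0}^2$ by the compact sets $\{(s,t):\min\{s,t\}\ge\delta\}$ and propagating Lemma \ref{support property} for rational parameters (with $\eta=\delta/C^2$) by the same deformation argument, is that each $\sigma_{s,t}$ in fact also satisfies the stronger support property on the lattice $\Lambda\oplus(\Lambda/\ker(g))$.
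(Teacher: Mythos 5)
Your argument is correct and follows the same overall strategy as the paper: both construct $\eta$ on the rational locus via Theorem \ref{map of rational stability conditions} and then extend to all of $\mathbb{R}_{>0}\times\mathbb{R}_{>0}$ by deformation, with Lemma \ref{support property} as the essential input. The difference lies in how the deformation theorem is packaged. The paper applies Bridgeland's original statement \cite[Theorem 7.1]{bridgeland2007stability} directly: using only the $\eta=0$ case of Lemma \ref{support property} it derives the lower bound $|Z_S^{s_0,t_0}(E)|\geq t_0(a(E)^2+c(E)^2)^{1/2}$ for $\sigma_{s_0,t_0}$-semistable $E$ (when $s_0\geq t_0$), hence $|Z_S^{s,t}(E)-Z_S^{s_0,t_0}(E)|\leq \sin(\tfrac{\pi}{10})\,|Z_S^{s_0,t_0}(E)|$ on explicit balls around rational centers that cover the quadrant. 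You instead invoke the support-property form of the deformation theorem from \cite{bayer2016space}, observing that the single form $Q_0=bc-ad$ is nonnegative on semistable classes and negative definite on $\ker Z_S^{s,t}$ for every $(s,t)$, and then lift paths; this makes continuity and path-independence more transparent, at the cost of the explicit neighbourhood sizes the paper obtains. The two mechanisms are equivalent here and rest on the same inequality. One caveat: your intermediate claim that $(\mathcal{A}_S^t,Z_S^{s,t})$ is already a pre-stability condition for irrational $s$ and fixed rational $t$ ``from the explicit construction'' is not automatic, because the real part $cs+b$ then has dense image and the Harder--Narasimhan property no longer follows from Noetherianity plus discreteness as in the proof of Theorem \ref{map of rational stability conditions}; this is harmless, since your deformation step can start from the dense set $\mathbb{Q}_{>0}\times\mathbb{Q}_{>0}$ and subsumes that case.
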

	
	\begin{proof}
		The image satisfy the support property because of Lemma \ref{support property}.
		
		By Theorem \ref{map of rational stability conditions}, we have the map $\eta':RStab(X)\times \mathbb{Q}_{>0}\times \mathbb{Q}_{>0}\rightarrow Stab(X\times S)$. We only need to prove that $\eta'$ is continuous at the factor $\mathbb{Q}_{>0}\times \mathbb{Q}_{>0}$ for any given rational stability condition $\sigma\in RStab(X)$.
		
		We look at rational stability conditions $\sigma_{s_0,t_0}$, where $s_0,t_0\in\mathbb{Q}_{>0}$. We assume that $s_0\geq t_0$ (the case $s_0<t_0$ is similar). Then if $|s-s_0|<sin(\frac{1}{10}\pi)t_0$ and $|t-t_0|<sin(\frac{1}{10}\pi)t_0$, we have 
		
		$$|Z_S^{s,t}(E)-Z_S^{s_0,t_0}(E)|\leq sin(\frac{1}{10}\pi)|Z_S^{s_0,t_0}(E)|$$for $E$ semi-stable with respect to $\sigma_{s_0,t_0}$. More specifically, by Lemma \ref{support property}, we have 
		
		\begin{equation*}
			\begin{split}
				|Z_S^{s_0,t_0}(E)|\ &=(c(E)^2s_0^2+b(E)^2+2s_0b(E)c(E)+a(E)^2t_0^2+d(E)^2-2t_0a(E)d(E))^{\frac{1}{2}} \\ &
				\geq (c(E)^2(s_0^2-(s_0-t_0)^2)+a(E)^2t_0^2+d(E)^2+2t_0(b(E)c(E)-a(E)d(E)))^{\frac{1}{2}} \\ &
				\geq (c(E)^2t_0^2+a(E)^2t_0^2)^{\frac{1}{2}}
			\end{split}
		\end{equation*}
		and \begin{equation*}
			\begin{split}
				|Z_S^{s,t}(E)-Z_S^{s_0,t_0}(E)|&=((t-t_0)^2a(E)^2+(s-s_0)^2c(E)^2)^{\frac{1}{2}}\\&\leq sin(\frac{1}{10}\pi)(c(E)^2t_0^2+a(E)^2t_0^2)^{\frac{1}{2}}.
			\end{split}
		\end{equation*}

		Therefore, by \cite[Theorem 7.1]{bridgeland2007stability}, we get the map.
		
	\end{proof}

\bibliographystyle{alpha}
\bibliography{bibfile}

\end{document}